\tikzset{ shorten <>/.style={ shorten >=#1, shorten <=#1}}
\newtheoremstyle{break}
	{}
	{}
	{\itshape}
	{}
	{\bfseries}
	{}
	{\newline}
	{}
\newtheoremstyle{romanfont}
	{}
	{}
	{}
	{}
	{\bfseries}
	{}
	{\newline}
	{}
\newtheoremstyle{cite}
	{}
	{}
	{\itshape}
	{}
	{\bfseries}
	{}
	{\newline}
	{\thmname{#1} \thmnumber{#2} \thmnote{#3}}
\newtheoremstyle{romancite}
	{}
	{}
	{}
	{}
	{\bfseries}
	{}
	{\newline}
	{\thmname{#1} \thmnumber{#2} \thmnote{#3}}
\theoremstyle{break}
\newtheorem{thm}[subsection]{Theorem}
\newtheorem{cor}[subsection]{Corollary}
\newtheorem{lemma}[subsection]{Lemma}
\newtheorem{propn}[subsection]{Proposition}
\newtheorem{introthm}{Theorem}
\newtheorem{introcor}{Corollary}
\theoremstyle{romanfont}
\newtheorem{defn}[subsection]{Definition}
\newtheorem*{notation*}{Notation}
\theoremstyle{cite}
\newtheorem{thmcite}[subsection]{Theorem}
\newtheorem{lemmacite}[subsection]{Lemma}
\newtheorem{propncite}[subsection]{Proposition}
\theoremstyle{romancite}
\newtheorem{defncite}[subsection]{Definition}
\crefname{defn}{Definition}{Definitions}
\crefname{propn}{Proposition}{Propositions}
\crefname{cor}{Corollary}{Corollaries}
\crefname{thm}{Theorem}{Theorems}
\let\H\relax
\renewcommand{\epsilon}{\varepsilon}
\newcommand{\bluehigh}[1]{{\color{blue} #1}}
\newcommand{\GHom}{\Hom_G}
\newcommand{\inj}{\hookrightarrow}
\newcommand{\bigmoduleshape}[2]{\begin{array}{c}	#1 \\ #2	\end{array}}
\newcommand{\threemoduleshape}[3]{\begin{array}{c}	#1 \\	#2 \\	#3 \end{array}}
\newcommand{\nin}{\notin}
\newcommand{\norm}{\trianglelefteq}
\newcommand{\perm}{\mathcal{L}}
\newcommand{\permG}{\mathcal{L}/\mathcal{L}^G}
\newcommand{\permk}{\perm/k}
\newcommand{\redhigh}[1]{{\color{red} #1 }}
\newcommand{\surj}{\twoheadrightarrow}
\newcommand{\upth}{\textsuperscript{th}}
\DeclareMathOperator{\Char}{char}
\DeclareMathOperator{\E}{E}
\DeclareMathOperator{\Ext}{Ext}
\DeclareMathOperator{\H}{H}
\DeclareMathOperator{\head}{head}
\DeclareMathOperator{\heart}{\mathcal{H}}
\DeclareMathOperator{\Hom}{Hom}
\DeclareMathOperator{\Ind}{Ind}
\DeclareMathOperator{\Irr}{Irr}
\DeclareMathOperator{\PC}{\mathcal{P}}
\DeclareMathOperator{\PGL}{PGL}
\DeclareMathOperator{\PSL}{PSL}
\DeclareMathOperator{\rad}{rad}
\DeclareMathOperator{\SL}{SL}
\DeclareMathOperator{\soc}{soc}
\DeclareMathOperator{\Syl}{Syl}
\DeclarePairedDelimiter{\abs}{\lvert}{\rvert}
\DeclarePairedDelimiter{\ceil}{\lceil}{\rceil}
\DeclarePairedDelimiter{\floor}{\lfloor}{\rfloor}
\begin{document}

\begin{center}
{\Large Cohomology of \texorpdfstring{\(\PSL_2(q)\)}{PSL(2,q)}}

{\large Jack Saunders\footnotemark} \footnotetext{This work was partially done during my PhD, thus I would like to thank Corneliu Hoffman \& Chris Parker for everything they have done for me so far. I am also grateful to the LMS for their financial support via the grant ECF-1920-30 and to Gunter Malle for his support during the corresponding fellowship.}

Department of Mathematics and Statistics, The University of Western Australia\\
35 Stirling Highway, Perth, WA 6009, Australia\vskip-0.8em
\href{mailto:jack.saunders@uwa.edu.au}{jack.saunders@uwa.edu.au}
\end{center}

\begin{abstract}
	In 2011, Guralnick and Tiep proved that if \(G\) was a Chevalley group with Borel subgroup \(B\) and \(V\) an irreducible \(G\)-module in cross characteristic with \(V^B = 0\), then the the dimension of \(\H^1(G,V)\) is determined by the structure of the permutation module on the cosets of \(B\). We generalise this theorem to higher cohomology and an arbitrary finite group, so that if \(H \leq G\) such that \(O_{r'}(H) = O^r(H)\) and \(V^H = 0\) for \(V\) a \(G\)-module in characteristic \(r\) then \(\dim \H^1(G,V)\) is determined by the structure of the permutation module on cosets of \(H\), and \(\H^n(G,V)\) by \(\Ext_G^{n-1}(V^*,M)\) for some \(kG\)-module \(M\) dependent on \(H\). We also determine \(\Ext_G^n(V,W)\) for all irreducible \(kG\)-modules \(V\), \(W\) for \(G \in \{\PSL_2(q), \PGL_2(q), \SL_2(q)\}\) in cross characteristic.
\end{abstract}

\section{Introduction}

Group cohomology is intricately tied with the extension theory of finite groups. If \(N\) is an abelian group with fixed action of another group \(G\) on \(N\) by automorphisms, then \(\H^1(G,N)\) parameterises the conjugacy classes of complements to \(N\) in \(N \rtimes G\), and \(\abs{\H^2(G,N)}\) is the number of equivalence classes of extensions of \(G\) by \(N\). The first cohomology group \(\H^1(G,N)\) is also linked to generating sets for groups and their modules \cite{GuralnickHoff,AschbacherGuralnickApplications}, and \(\H^2(G,V)\) for \(V\) an irreducible \(G\)-module gives information about the number of relations needed in profinite presentations of \(G\) \cite{LubotzkyPresentations,GuralnickPresentations}.

Due to these various applications, it is natural to want to bound the cohomology of some group \(G\) and some \(G\)-module \(V\), and in particular it is natural to start with the case where \(G\) is a finite simple group and \(V\) is an irreducible \(kG\)-module for \(k\) an algebraically closed field of characteristic \(r\). The cohomology in this case has been investigated extensively, though still remarkably little is known. In 1986, Guralnick conjectured \cite{GuralnickDimension86} that if \(V\) is a faithful irreducible \(kG\)-module then there exists some absolute constant \(c\) such that \(\dim \H^1(G,V) \leq c\), and originally it was conjectured that \(c = 2\). Since then, examples were found in 2003 and 2008 for groups of Lie type \(G(p^n)\) in defining characteristic (\(r = p\)) \cite{BrayWilson,ScottCounter} with 3-dimensional cohomology, and more recently in 2020 L{\"u}beck \cite[Theorem 4.7]{LubeckComputation} found an irreducible module \(V\) in defining characteristic for \(\operatorname{E}_6(q)\) with \(\dim \H^1(G,V) = 3537142\) (a similar computation of L{\"u}beck in 2012 was also used to disprove Wall's conjecture \cite{WallConjectureCounterexample}). Thus the Guralnick conjecture is likely false, but it is still important to study the behaviour of group cohomology.

Let \(G\) be a rank \(e\) finite group of Lie type, defined in characteristic \(p\) with Weyl group \(W\). If \(r \neq p\), then the current best bounds for irreducible \(V\) are \(\dim \H^1(G,V) \leq \abs{W}^{\frac{1}{2}} + e - 1\) from \cite{GuralnickTiep2} in 2019 (an improvement on \cite{GuralnickTiep} from 2011) with strong bounds when the \(B\)--fixed points \(V^B = 0\) for \(B\) a Borel subgroup of \(G\). In defining characteristic \(r = p\), we instead have \cite{H1Defining} from 2013 stating that, for any irreducible \(kG\)-module \(V\),
\[\dim \H^1(G,V) \leq \max \left\{ \frac{z_p^{\frac{h^3}{6} + 1} - 1}{z_p - 1}, \frac{1}{2} (h^2(3h - 3)^3)^{\frac{h^2}{2}}\right\},\]
where \(z_p = \floor{\frac{h^3}{6} (1 + \log_p (h-1))}\) and \(h\) is the Coxeter number of \(G\). Referring to the above module for \(\E_6(q)\), this bound amounts to roughly \(10^{703}\) compared to the \(10^6\) that is the largest known example.

For second cohomology and higher, less is known but for example in \cite{GuralnickPresentations} from 2007 it is shown that \(\dim \H^2(G,V) \leq \frac{35}{2} \dim V\) for \(G\) any finite quasisimple group and \(V\) any \(G\)-module, or \(\frac{37}{2} \dim V\) for \(G\) any finite group and \(V\) a faithful, irreducible \(G\)-module.

In some cases, we also know that a bound of a particular type exists, but we have no explicit expressions for them. For a finite group of Lie type in its defining characteristic, Cline, Parshall and Scott \cite{ClineParshallScott} showed that \(\dim \H^1(G,V)\) (for \(V\) irreducible) is bounded by some constant dependent only upon the rank of \(G\). More generally, Guralnick and Tiep \cite{GuralnickTiep2} have showed that if \(G\) is a finite group and \(V\) an irreducible \(kG\)-module then \(\dim \H^1(G,V)\) is bounded by some constant dependent only on \(r\) and the \emph{sectional \(r\)-rank} of \(G\) (the maximum \(r\)-rank of an elementary abelian section of \(G\)). Very recently, the as-yet unpublished sequel to this work \cite{GuralnickTiep3} generalises the result to show that, for arbitrary \(D\) and irreducible \(V\), \(\dim \Ext_G^n(D, V)\) is bounded by a constant dependent only on the sectional \(r\)-rank of \(G\), \(\dim D\) and \(n\).

In this work, we generalise Theorem 2.2 from \cite{GuralnickTiep} to a larger class of groups, higher cohomology and reducible modules. Let \(O_{r'}(H)\) be the largest normal subgroup of a group \(H\) with order not divisible by \(r\) and let \(O^r(H)\) be the smallest normal subgroup of \(H\) such that \(H/O^r(H)\) is an \(r\)-group. Then we show the following.

\begin{introthm}
	Let \(G\) be a finite group and let \(k\) be an algebraically closed field of characteristic \(r\). Suppose \(H \leq G\) is such that \(O_{r'}(H) = O^r(H)\) and suppose that \(\perm\) is the permutation module \(\Ind_H^G k\) of \(G\) acting on cosets of \(H\). Suppose \(V\) is a \(kG\)-module with \(V^H = 0\). Then \(\H^n(G,V) \cong \Ext_G^{n-1}(V^*, \permG)\).
\end{introthm}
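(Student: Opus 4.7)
My plan is to argue that, under the hypotheses, $\perm \otimes V$ is cohomologically trivial for $G$, and then to extract the desired isomorphism from the short exact sequence $0 \to k \to \perm \to \permG \to 0$ by a dimension-shift long exact sequence and a tensor-hom adjunction.

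The first step is a vanishing statement over $H$: if $W$ is any $kH$-module with $W^H = 0$, then $\H^n(H, W) = 0$ for all $n \geq 0$. To prove this, I would set $N = O_{r'}(H) = O^r(H)$, so that $|N|$ is coprime to $r$ and $H/N$ is an $r$-group. Since every nonzero module for an $r$-group in characteristic $r$ has nonzero fixed vectors, $W^H = (W^N)^{H/N} = 0$ forces $W^N = 0$. Maschke then kills $\H^q(N, W)$ for $q > 0$, while $W^N = 0$ kills the $q = 0$ row, so every $E_2$-term of the Lyndon--Hochschild--Serre spectral sequence $E_2^{p,q} = \H^p(H/N, \H^q(N, W)) \Rightarrow \H^{p+q}(H, W)$ vanishes.

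Applying this lemma to $V$ viewed as a $kH$-module, then combining the projection formula $\perm \otimes V \cong \Ind_H^G V|_H$ with Shapiro's lemma, yields $\H^n(G, \perm \otimes V) \cong \H^n(H, V|_H) = 0$ for every $n \geq 0$. Since $\perm^G \cong k$ (the trivial submodule spanned by the sum of all cosets of $H$), tensoring $0 \to k \to \perm \to \permG \to 0$ with $V$ over $k$ produces $0 \to V \to \perm \otimes V \to V \otimes \permG \to 0$, and the associated long exact sequence in $G$-cohomology gives $\H^n(G, V) \cong \H^{n-1}(G, V \otimes \permG)$ for all $n \geq 1$. Since $V$ is finite-dimensional, $V \otimes -$ is exact and preserves projectives (as $V \otimes kG$ is $kG$-free of rank $\dim V$ via the map $v \otimes g \mapsto g^{-1}v \otimes g$), so the tensor-hom adjunction $\Hom_G(V^*, -) \cong (V \otimes -)^G$ derives to $\Ext_G^{n-1}(V^*, \permG) \cong \H^{n-1}(G, V \otimes \permG)$, completing the identification.

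The main obstacle, and the only place the full hypothesis on $H$ is used, is the spectral-sequence argument of the first step: the condition $O_{r'}(H) = O^r(H)$ is precisely what is needed for both the positive-degree rows and the $q = 0$ row of $E_2$ to vanish simultaneously, a strictly stronger acyclicity than $r \nmid |H|$ alone would give. Everything after this is routine homological algebra, and in particular the construction recovers the $n = 1$ case of Guralnick--Tiep in the form $\H^1(G, V) \cong \Hom_G(V^*, \permG)$.
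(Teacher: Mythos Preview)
Your proof is correct and follows essentially the same route as the paper. Both arguments hinge on the identical vanishing lemma --- that $\H^n(H,V)=0$ for all $n$ via the Lyndon--Hochschild--Serre spectral sequence for $N = O_{r'}(H) = O^r(H) \trianglelefteq H$ --- and then feed it into the long exact sequence attached to $0 \to k \to \perm \to \permG \to 0$. The paper applies $\Hom_G(V^*,-)$ directly and invokes Eckmann--Shapiro for $\Ext_G^n(V^*,\perm)\cong\H^n(H,V)$, while you first tensor with $V$, use the projection formula plus Shapiro for $\H^n(G,\perm\otimes V)\cong\H^n(H,V)$, and only afterwards convert $\H^{n-1}(G,V\otimes\permG)$ to $\Ext_G^{n-1}(V^*,\permG)$ by the derived tensor--hom adjunction; these two packagings are equivalent and neither buys anything the other does not.
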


In fact, \(\perm^G \cong k\) and so in future we shall talk about \(\permk\) rather than \(\permG\). Of particular note is that when \(n = 1\) one may then determine \(\dim \H^1(G,V)\) by investigating the structure of \(\soc (\permk)\). More formally,

\begin{introcor}
	With the above notation, if \(V^H = 0\), then \(\H^1(G,V) \cong \GHom(V^*, \permk)\).
\end{introcor}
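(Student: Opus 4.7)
The plan is to deduce the corollary as the $n=1$ case of Theorem~1. Setting $n=1$ in $\H^n(G,V) \cong \Ext_G^{n-1}(V^*, \permG)$ gives $\H^1(G,V) \cong \GHom(V^*, \permG)$, and the remark following the theorem identifies $\permG$ with $\permk$ (since $\perm^G \cong k$), whence the corollary. So the real task is to prove Theorem~1, of which this is a direct specialisation.

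If I were to prove the corollary from scratch rather than quote the theorem, I would start from the short exact sequence of $kG$-modules
\[ 0 \to k \to \perm \to \permk \to 0, \]
in which $k$ is identified with the trivial submodule $\perm^G$. Applying $\GHom(V^*, -)$ and taking the associated long exact sequence in $\Ext_G^\bullet$, the relevant piece reads
\[ \GHom(V^*, \perm) \to \GHom(V^*, \permk) \to \H^1(G,V) \to \Ext_G^1(V^*, \perm), \]
after identifying $\Ext_G^1(V^*, k) \cong \H^1(G, V^{**}) \cong \H^1(G,V)$ and using $V^G \subseteq V^H = 0$ to kill the group on the far left. By Frobenius reciprocity $\GHom(V^*, \perm) = \GHom(V^*, \Ind_H^G k) \cong \Hom_H(\Res V^*, k) \cong V^H = 0$, so the corollary will follow at once if I can show $\Ext_G^1(V^*, \perm) = 0$.

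By Shapiro's lemma, $\Ext_G^1(V^*, \Ind_H^G k) \cong \H^1(H, V)$, which reduces matters to showing that $\H^1(H,V) = 0$ under the hypotheses. This is where the assumption $O_{r'}(H) = O^r(H)$ enters and should be the crux of the argument. The equality forces $O^r(H)$ to be an $r'$-group, so by Schur--Zassenhaus we may write $H = N \rtimes P$ with $N = O_{r'}(H)$ of order coprime to $r$ and $P$ a Sylow $r$-subgroup of $H$. The Lyndon--Hochschild--Serre spectral sequence for $N \norm H$ then collapses, as $\H^q(N,V) = 0$ for $q \geq 1$, yielding $\H^n(H,V) \cong \H^n(P, V^N)$ for every $n$. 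Since $P$ is an $r$-group and $k$ has characteristic $r$, every nonzero $kP$-module has nonzero $P$-fixed points; but $(V^N)^P = V^H = 0$, forcing $V^N = 0$ and hence $\H^n(H,V) = 0$ for all $n \geq 0$. In particular $\Ext_G^1(V^*, \perm) = 0$, completing the proof. The main obstacle is precisely this vanishing step via the spectral sequence, since it carries the entire weight of the hypothesis $O_{r'}(H) = O^r(H)$; the remainder is a routine chase through the long exact sequence.
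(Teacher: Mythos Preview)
Your proof is correct and follows essentially the same route as the paper: the short exact sequence \(0 \to k \to \perm \to \permk \to 0\), Shapiro's lemma to identify \(\Ext_G^n(V^*,\perm)\) with \(\H^n(H,V)\), and the Hochschild--Serre spectral sequence for \(O_{r'}(H) \norm H\) together with the observation that an \(r\)-group has no nonzero fixed-point-free modules to force \(V^{O_{r'}(H)} = 0\). The only cosmetic difference is that the paper isolates the equivalence \(V^H = 0 \iff V^{O_{r'}(H)} = 0\) as a separate lemma before running the spectral sequence, whereas you fold it directly into the vanishing argument.
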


The hypothesis \(O_{r'}(H) = O^r(H)\) is vital to our arguments for the above theorem. It stems from \cref{NewGTLemma}, which one can see to be a generalisation of \cite[Lemma 2.1]{GuralnickTiep} by taking \(B\) to be a Borel subgroup of a group of Lie type \(G\) and \(r\) to be any prime not equal to the defining characteristic of \(G\). Using this lemma, specifically the statement that \(V^H = 0\) if and only if \(V^{O_{r'}(H)} = 0\), we can show that in fact \(\H^n(H,V) = 0\) for all \(n\) when \(V^H = 0\) and then a long exact sequence in \(\Ext\) implies the main theorem. The use of this hypothesis permits the extension of \cite[Theorem 2.2]{GuralnickTiep} from being a statement about irreducible modules for groups of Lie type to being a statement about any module for an arbitrary finite group.

If one wanted to obtain results similar to those of Guralnick--Tiep for all groups of Lie type, one approach may be to use induction on the rank of \(G\). In order to do this, we would need to know the cohomology of the rank 1 groups of Lie type. In any case, it is interesting to know what happens for the small rank groups of Lie type as this may be of use in forming conjectures or performing calculations in higher ranks. As such, the bulk of this article is dedicated to determining \(\Ext_G^i(V, W)\) for cross characteristic irreducible \(kG\)-modules \(V\), \(W\) for \(G \in \{\PSL_2(q), \PGL_2(q), \SL_2(q)\}\).

Our main tool for determining \(\Ext\) groups is as follows. Let \(\epsilon \colon \PC(V) \surj V\) be a (surjective) map onto \(V\) from its projective cover. We define the {\itshape Heller translate} \(\Omega V\) to be the kernel of this map, and note that by \cref{OmegaCohomology} for \(W\) irreducible, \(\dim \Ext_G^i(V, W) = \dim \Hom(\Omega^i V, W)\) (where \(\Omega^i V \coloneqq \Omega (\Omega^{i-1} V)\)) and so the dimension of \(\Ext_G^i(V, W)\) may be read off from the structure of \(\Omega^i V\). Then to determine the dimensions of these \(\Ext\) groups, we need only examine these Heller translates.

To examine Heller translates of particular modules, it is useful to know the structure of the projective indecomposable modules (PIMs) for \(G\). When examining a block with a cyclic defect group (a sufficient condition for this is that the Sylow \(r\)-subgroups of \(G\) are cyclic), the structure of each PIM for this block is easily described by the block's Brauer tree (see \cite[\nopp V]{AlperinLocal}). In such a case we also know that \(\dim \Ext_G^i(V, W) \leq 1\) for all indecomposable \(V\), irreducible \(W\) so we need only determine whether \(\Ext_G^i(V, W)\) is zero.

If the Sylow \(r\)-subgroups of \(G\) are not cyclic, the representation theory is much more complicated. When \(r > 2\), the {\itshape representation type} of any \(r\)-block of \(kG\) with noncyclic defect group is {\itshape wild}, loosely meaning that the representation theory of this block `contains' that of all finite-dimensional \(k\)-algebras. When \(r = 2\) and the Sylow 2-subgroups of \(G\) are dihedral, semidihedral or generalised quaternion then the representation type of any 2-blocks of \(G\) with noncyclic defect, while still not {\itshape finite} as in the cyclic case, is {\itshape tame}. As a result of this, the representation theory in such cases is slightly simpler. For \(G = \PSL_2(q)\), the Sylow 2-subgroups of \(G\) are dihedral and thus the representation type is tame. In this case, we have a description of the structure of the projective indecomposable \(kG\)-modules similar to the cyclic case via {\itshape Brauer graphs} as explained in \cite{PSL2BrauerGraphs}.

Using this description we are able to determine all \(\Ext\) groups for the two nontrivial modules in the principal block of \(kG\), completing the description of \(\Ext\) groups for this block. We then use the information obtained for \(\PSL_2(q)\) to extend these results to \(\SL_2(q)\) and \(\PGL_2(q)\) in all nondefining characteristics. As with \(\PSL_2(q)\), if \(r \neq 2\) this is almost immediate and the characteristic 2 case requires various additional considerations. For \(\PGL_2(q)\) we make use of the fact that \(\PGL_2(q) \leq \PSL_2(q^2)\) and use knowledge of \(\PSL_2(q)\) and Shapiro's Lemma to obtain the result and for \(\SL_2(q)\) we note that the Sylow 2-subgroups of \(G\) are generalised quaternion and thus all irreducible modules are periodic of period dividing 4, so \(\Omega^4 V \cong V\) for all irreducible \(V\).

\section{General results}

\begin{notation*}
	For the remainder of this article, let \(k\) be an algebraically closed field of characteristic \(r\), \(G\) be a finite group and, unless otherwise specified, \(\Hom\) shall denote \(\GHom\).
\end{notation*}

We begin by generalising a lemma and theorem from \cite{GuralnickTiep}, whilst also providing a shorter proof of the original theorem as a special case. Recall that \(O_{r'}(H)\) is the largest normal subgroup of a group \(H\) of order not divisible by \(r\) and \(O^r(H)\) is the smallest normal subgroup of \(H\) such that \(H/O^r(H)\) is an \(r\)-group.

\begin{lemma} \label{NewGTLemma}
	Suppose \(H\) is a finite group, \(k\) a field of characteristic \(r\) and \(V\) a \(kH\)-module. If \(O_{r'}(H) = O^r(H) \eqqcolon A\) then the following statements are equivalent.

	\begin{enumerate}[i)]
		\item \(V^H \neq 0\),
		\item \(H\) has trivial composition factors on \(V\),
		\item \(V^A \neq 0\),
		\item \((V^*)^H \neq 0\).
	\end{enumerate}
\end{lemma}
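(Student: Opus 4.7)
The plan is to exploit the structural content of the hypothesis $O_{r'}(H) = O^r(H) \eqqcolon A$: it forces $A \trianglelefteq H$ to have order coprime to $r$ while $H/A$ is an $r$-group. Two complementary tools then become available --- Maschke's theorem applies to the action of $A$, and any nonzero $k[H/A]$-module has a nonzero fixed vector since $H/A$ is an $r$-group and $\Char k = r$. I would prove the cycle $(i) \Rightarrow (ii) \Rightarrow (iii) \Rightarrow (i)$, and then deduce $(iv)$ by a duality argument.

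The key preliminary step is to write $V|_A = V^A \oplus W$, where $W$ is the sum of the non-trivial $A$-isotypic components; this uses only that $r \nmid \abs{A}$. Because $A \trianglelefteq H$, conjugation by $H$ permutes $A$-isotypic components while fixing the trivial isotype, so this is actually a decomposition of $V$ as a $kH$-module. With this in hand, $(i) \Rightarrow (ii)$ is immediate (any fixed vector generates a trivial submodule and hence yields a trivial composition factor); $(ii) \Rightarrow (iii)$ follows because any trivial $H$-composition factor of $V$ is trivial on $A$ and, by the semisimplicity of $V|_A$, must lie in $V^A$, forcing $V^A \ne 0$; and $(iii) \Rightarrow (i)$ reduces to the fixed-point property of the $r$-group $H/A$ acting on the nonzero $k[H/A]$-module $V^A$, giving $V^H = (V^A)^{H/A} \ne 0$.

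For $(i) \Leftrightarrow (iv)$ I would apply the already-established equivalence $(i) \Leftrightarrow (ii)$ to $V^*$ in place of $V$, and invoke the fact that $V$ and $V^*$ have the same multiplicity of the trivial composition factor (the trivial module being self-dual), so that the presence of a trivial composition factor is invariant under dualisation.

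The main obstacle is really just unpacking the hypothesis correctly: once one recognises that $A$ simultaneously plays the role of $r'$-core (so that $V|_A$ is semisimple and the trivial $A$-isotypic is canonically $V^A$) and of the subgroup with $r$-power index (so that $H/A$ fixes a vector on any nonzero module in characteristic $r$), the equivalences fall out in a few lines. Everything downstream is a routine combination of Maschke's theorem and the fixed-point property of $r$-groups over a field of characteristic $r$.
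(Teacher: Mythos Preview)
Your proof is correct and follows essentially the same route as the paper: the cycle $(i)\Rightarrow(ii)\Rightarrow(iii)\Rightarrow(i)$ via Maschke for $A$ and the fixed-point property of the $r$-group $H/A$, then duality for $(iv)$. The only cosmetic differences are that you make the $H$-stable isotypic decomposition $V = V^A \oplus W$ explicit (the paper leaves the Maschke step as a one-liner), and for $(iv)$ you pass through $(ii)$ and composition factors of $V^*$ whereas the paper passes through $(iii)$ and $(V^*)^A$; both reduce to the self-duality of the trivial module.
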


\begin{proof}
	That i) implies ii) is clear, and that ii) implies iii) follows from the fact that \(r \nmid \abs{A}\). To see that iii) implies i), we note that the \(r\)-group \(H/A\) acts on \(V^A\) so we have that \((V^A)^{H/A} \neq 0\) and so \(V^H \neq 0\). Thus i)--iii) are equivalent, it remains only to show that iv) is equivalent to the rest; to see this, note that using i) \(\iff\) iii) we have
	\[V^H \neq 0 \iff V^A \neq 0 \iff (V^*)^A \neq 0 \iff (V^*)^H \neq 0,\]
	as required.
\end{proof}

\begin{lemma} \label{HHomis0}
	Suppose \(H\) and \(A\) are as above. Then if \(V\) is a \(kH\)-module with \(V^H = 0\), we have that \(\H^n(H,V) = 0\) for all \(n\).
\end{lemma}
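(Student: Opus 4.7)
The plan is to apply the Lyndon--Hochschild--Serre spectral sequence to the extension \(1 \to A \to H \to H/A \to 1\), namely
\[E_2^{p,q} = \H^p(H/A, \H^q(A, V)) \Rightarrow \H^{p+q}(H, V).\]
Since \(A = O_{r'}(H)\) has order coprime to \(r = \Char k\), Maschke's theorem says \(kA\) is semisimple, which forces \(\H^q(A, V) = 0\) for every \(q > 0\). The bottom row will collapse using \cref{NewGTLemma}: the hypothesis \(V^H = 0\) is equivalent to \(V^A = 0\), so \(E_2^{p, 0} = \H^p(H/A, V^A) = 0\) as well. With the entire \(E_2\) page vanishing, the spectral sequence will give \(\H^n(H, V) = 0\) for every \(n\), as required.

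The only real obstacle is having the Lyndon--Hochschild--Serre spectral sequence available; once it is, the argument is essentially a one-line consequence of \cref{NewGTLemma} combined with the coprimality of \(\abs{A}\) and \(r\). A more elementary alternative would be to induct on \(n\) using the five-term inflation--restriction sequence: the cases \(n = 0, 1\) drop out immediately from \(V^A = 0\) and \(\H^1(A, V) = 0\). However, attempting to bootstrap to higher \(n\) by dimension shifting — embedding \(V\) in an injective \(kH\)-module \(I\) and passing to \(I/V\) — is awkward because \((I/V)^H\) will generally be nonzero, so the inductive hypothesis does not transfer cleanly to the quotient. For this reason the LHS spectral sequence appears to be the cleanest tool.
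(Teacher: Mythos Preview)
Your proof is correct and follows essentially the same approach as the paper: both invoke the Hochschild--Serre spectral sequence for \(1 \to A \to H \to H/A \to 1\), kill the rows with \(q>0\) using \(r \nmid \abs{A}\), and kill the bottom row using \(V^A = 0\) from \cref{NewGTLemma}. Your additional remarks on the dimension-shifting alternative are not in the paper but are a reasonable aside.
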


\begin{proof}
	Note that we have an exact sequence of groups \(1 \to A \to H \to H/A \to 1\), and thus from the Hochschild--Serre spectral sequence \cite[\nopp6.8.2]{Weibel} we can see that \(\H^n(H,V)\) is a subquotient of
	\[\bigoplus_{i + j = n} \H^i(H/A, \H^j(A,V)) \cong \H^n(H/A, V^A) \oplus \bigoplus_{\substack{i + j = n \\ i \neq n}}\H^i(H/A, \H^j(A,V)).\]
	We may then note that \(V^H = 0\) so \(V^A = 0\) by \cref{NewGTLemma} and \(\H^j(A,V) = 0\) for all \(j > 1\) since \(r \nmid \abs{A}\), thus the above module is 0 and so \(\H^n(H,V) = 0\) for all \(n\).
\end{proof}

\begin{thm} \label{GeneralTheorem}
	Suppose \(G\) is a finite group, \(k = \overline{k}\) a field of characteristic \(r\), \(V\) a \(kG\)-module and \(H \leq G\) with \(O_{r'}(H) = O^r(H) \eqqcolon A\). Let \(\perm\) be the permutation module of \(G\) acting on cosets of \(H\). Then if \(V^H = 0\), we have that \(\dim \H^n(G,V) = \dim \Ext_G^{n-1}(V^*, \perm/\perm^G) = \dim \Ext_G^{n-1}(V^*, \permk)\).
\end{thm}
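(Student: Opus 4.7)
The plan is to apply $\Ext_G^*(V^*, -)$ to the short exact sequence $0 \to k \to \perm \to \permk \to 0$ (noting that $\perm^G \cong k$ canonically) and to show that the flanking Ext groups in the resulting long exact sequence vanish. Spelling this out, one obtains
\[\cdots \to \Ext_G^{n-1}(V^*, \perm) \to \Ext_G^{n-1}(V^*, \permk) \to \Ext_G^n(V^*, k) \to \Ext_G^n(V^*, \perm) \to \cdots\]
so if the outer terms are zero, the middle two must coincide.

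To identify the terms, I would first note that since $V$ is finite-dimensional, $\Ext_G^n(V^*, k) \cong \H^n(G, V^{**}) \cong \H^n(G,V)$, which is the right-hand side of the claim. For the Ext groups into $\perm$, I would invoke Eckmann--Shapiro: induction from a subgroup of a finite group is exact and has restriction as both a left and right adjoint, so
\[\Ext_G^n(V^*, \perm) = \Ext_G^n(V^*, \Ind_H^G k) \cong \Ext_H^n(\Res_H^G V^*, k) \cong \H^n(H, V).\]
The hypothesis $V^H = 0$ is equivalent, by \cref{NewGTLemma}, to $(V^*)^H = 0$, so \cref{HHomis0} applies to $V$ (and indeed to $V^*$) and forces $\H^n(H, V) = 0$ for all $n \geq 0$. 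Substituting these zeroes into the long exact sequence yields $\Ext_G^{n-1}(V^*, \permk) \cong \H^n(G,V)$ for every $n \geq 1$.

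The only case needing separate attention is $n = 1$, where one must also inspect the leftmost terms of the long exact sequence: $\Hom_G(V^*, k) \cong V^G \subseteq V^H = 0$ and, by Frobenius reciprocity, $\Hom_G(V^*, \perm) \cong (V^*)^H = 0$, so the isomorphism persists. Overall the argument is essentially mechanical once \cref{HHomis0} is in place; the only delicacy is keeping track of $V$ versus $V^*$ and ensuring Eckmann--Shapiro is applied on the correct variable. I do not anticipate any substantive obstacle beyond this bookkeeping.
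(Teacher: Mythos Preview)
Your proposal is correct and follows essentially the same route as the paper: apply $\Hom_G(V^*,-)$ to the short exact sequence $0 \to k \to \perm \to \permk \to 0$, use Shapiro's Lemma to identify $\Ext_G^n(V^*,\perm)$ with $\H^n(H,V)$, and then kill these terms via \cref{HHomis0}. Your separate treatment of $n=1$ is harmless but unnecessary, since Shapiro already gives $\Hom_G(V^*,\perm)\cong \H^0(H,V)=V^H=0$ directly.
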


\begin{proof}
	We first note that we have the exact sequence
	\[0 \to k \to \perm \to \permk \to 0\]
	and we may apply \(\Hom_G(V^*, -)\) to this to obtain a long exact sequence with segments of the form
	\[\cdots \to \Ext_G^{n-1}(V^*, \perm) \to \Ext_G^{n-1}(V^*, \permk) \to \Ext_G^n(V^*, k) \to \Ext_G^n(V^*, \perm) \to \cdots\]
	which, using Shapiro's Lemma \cite[Corollary 2.8.4]{Benson1}, reduces to
	\[\cdots \to \H^{n-1}(H, V) \to \Ext_G^{n-1}(V^*, \permk) \to \H^n(G,V) \to \H^n(H,V) \to \cdots. \tag{\(\star\)} \label{BestSequence}\]
	Then, applying \cref{HHomis0} it follows that \(\H^n(G,V) \cong \Ext_G^{n-1}(V^*, \permk)\) as required.
\end{proof}

An important observation is that when \(n = 1\) we get

\begin{cor} \label{H1Result}
	Retain all of the above notation. If \(V^H = 0\), then \(\dim \H^1(G,V) = \dim \Hom(V^*, \permk)\).	
\end{cor}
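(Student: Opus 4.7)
The plan is to specialize \cref{GeneralTheorem} to the case $n = 1$. Since $\Ext^{0}_G(A,B) = \Hom_G(A,B)$ for any $kG$-modules $A, B$, the theorem immediately yields
\[\dim \H^1(G, V) = \dim \Ext_G^{0}(V^*, \permk) = \dim \Hom(V^*, \permk),\]
where the final equality invokes the convention fixed at the start of this section that $\Hom$ denotes $\GHom$. In effect the corollary is nothing more than a reading-off of the first degree of the main theorem.

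Alternatively, if one wishes to avoid quoting the full theorem, the same conclusion can be extracted directly from the initial segment of the long exact sequence $(\star)$ built in the proof of \cref{GeneralTheorem}. The plan in that case would be to write out that segment, use Shapiro's Lemma in degree zero, and invoke the natural isomorphisms $\Hom_G(V^*, k) \cong V^G$ and $\Hom_H(V^*, k) \cong V^H$ (both coming from $V^{**} \cong V$ together with taking invariants). One then arrives at
\[0 \to V^G \to V^H \to \Hom(V^*, \permk) \to \H^1(G,V) \to \H^1(H,V).\]
The hypothesis $V^H = 0$ forces the first two terms to vanish (noting $V^G \subseteq V^H$), and \cref{HHomis0} forces the fifth term to vanish, so the sequence collapses to the desired isomorphism $\Hom(V^*, \permk) \cong \H^1(G,V)$.

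There is no serious obstacle here; all the genuine work has been done in \cref{NewGTLemma}, \cref{HHomis0} and \cref{GeneralTheorem}. The only point demanding any care is the tracking of identifications of $\Hom$ and $\Ext$ groups via Shapiro's Lemma and duality, exactly as in the proof of the theorem itself.
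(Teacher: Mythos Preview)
Your proposal is correct and matches the paper's approach exactly: the paper simply remarks that this is the case \(n = 1\) of \cref{GeneralTheorem}, which is precisely your first argument using \(\Ext_G^0 = \Hom_G\). Your alternative direct extraction from the long exact sequence \eqref{BestSequence} is also valid and amounts to re-deriving the relevant piece of the theorem's proof.
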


In particular, when \(V\) is irreducible we have that \(\dim \H^1(G,V)\) is the multiplicity of \(V^*\) in the socle of \(\permk\). This also gives the following corollary.

\begin{cor} \label{PermCompositionFactor}
	Retain notation as above. Let \(V\) be an irreducible \(kG\)-module which is not a composition factor of \(\perm\). Then \(\H^1(G,V) = 0\).
\end{cor}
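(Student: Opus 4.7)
The plan is to apply Corollary \ref{H1Result} and then exploit the self-duality of the permutation module. First I would verify the hypothesis $V^H = 0$: by Frobenius reciprocity, $\Hom_G(\perm, V) = \Hom_G(\Ind_H^G k, V) \cong V^H$, so if $V^H \neq 0$ then $V$ is a quotient of $\perm$ and in particular a composition factor, contradicting the hypothesis. Hence $V^H = 0$ and Corollary \ref{H1Result} yields $\dim \H^1(G,V) = \dim \Hom(V^*, \permk)$, reducing the problem to showing $\Hom(V^*, \permk) = 0$.

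Next, since $V^*$ is irreducible, any nonzero $G$-homomorphism $V^* \to \permk$ is injective, so it would place $V^*$ inside $\soc(\permk) \subseteq \permk$; in particular $V^*$ would then appear as a composition factor of $\permk$, and therefore of $\perm$ itself. To rule this out, I would note that $\perm = \Ind_H^G k$ is self-dual as a $kG$-module: the $k$-linear dual of the natural basis $\{gH : gH \in G/H\}$ is permuted by $G$ in exactly the same way as the original basis, giving an explicit $kG$-isomorphism $\perm^* \cong \perm$. Thus the multiset of composition factors of $\perm$ is closed under the involution $W \mapsto W^*$, so $V^*$ being a composition factor of $\perm$ would force $V \cong V^{**}$ to be one as well, contradicting the hypothesis. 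Therefore $\Hom(V^*, \permk) = 0$ and $\H^1(G,V) = 0$.

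There is no genuine obstacle here: the whole argument is a short application of Corollary \ref{H1Result} together with Frobenius reciprocity. The only mild subtlety is the passage from ``$V$ is not a composition factor of $\perm$'' to ``$V^*$ is not a composition factor of $\perm$,'' which the self-duality of $\perm$ delivers in one line; one could equivalently appeal to Lemma \ref{NewGTLemma} to pass between $V$ and $V^*$ at the level of fixed points, but the permutation-module self-duality is the cleanest route.
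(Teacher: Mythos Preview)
Your proof is correct and is exactly the natural elaboration of what the paper leaves implicit: the paper presents this corollary without proof, as an immediate consequence of \cref{H1Result}. Your verification that \(V^H = 0\) via Frobenius reciprocity, followed by the self-duality of \(\perm\) to pass from ``\(V\) is not a composition factor'' to ``\(V^*\) is not a composition factor'', is precisely the intended argument.
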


Or, for a consequence of this corollary that can simply be read off from character degrees, we have the following.

\begin{cor}
	Let \(G\) be a finite group and \(V\) an irreducible \(kG\)-module. If \(\dim V > [G : H]\) for some \(H \leq G\) such that \(O_{r'}(H) = O^r(H)\), then \(\H^1(G,V) = 0\).
\end{cor}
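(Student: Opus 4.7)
The plan is to obtain this as an immediate consequence of \cref{PermCompositionFactor}. First I would observe that the permutation module $\perm = \Ind_H^G k$ has $k$-dimension exactly $[G:H]$, since a transversal for $H$ in $G$ furnishes a $k$-basis. Every composition factor of a finite-dimensional $kG$-module has dimension at most that of the ambient module itself, so the hypothesis $\dim V > [G:H] = \dim \perm$ will immediately force the irreducible module $V$ not to appear as a composition factor of $\perm$.

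With that observation in hand, I would then invoke \cref{PermCompositionFactor} to conclude $\H^1(G,V) = 0$. One point worth flagging for completeness is that the hypothesis $V^H = 0$ of the underlying \cref{H1Result} is automatically satisfied here: by Frobenius reciprocity, $\Hom_G(\perm, V) \cong \Hom_H(k, V|_H) = V^H$, so $V^H \neq 0$ would make $V$ a quotient, and hence a composition factor, of $\perm$; the contrapositive is exactly what is needed in order to apply the cited corollary.

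There is no genuine obstacle in this argument: the entire proof reduces to the dimension comparison plus citation of the earlier corollary. The only subtle point, hidden in the cited corollary rather than in the proof itself, is that the hypothesis $O_{r'}(H) = O^r(H)$ is doing all of the substantive work behind the scenes, since it underwrites \cref{HHomis0} and hence the collapse of the long exact sequence from \cref{GeneralTheorem} into the isomorphism $\H^1(G,V) \cong \Hom(V^*, \permk)$ that makes \cref{PermCompositionFactor} meaningful.
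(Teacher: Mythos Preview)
Your proposal is correct and matches the paper's own approach: the corollary is stated immediately after \cref{PermCompositionFactor} as ``a consequence of this corollary that can simply be read off from character degrees,'' which is precisely the dimension comparison you spell out. Your explicit verification that $V^H = 0$ is a nice bit of extra care, but it is already folded into \cref{PermCompositionFactor} itself, so strictly speaking you only need the observation $\dim V > [G:H] = \dim \perm$ and the citation.
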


One example where the previous corollaries are useful is for \(G = S_p\), the symmetric group of odd prime degree \(p\). \(G\) has a subgroup \(S_{p-1}\) of index \(p\), and \(O_{p'}(S_{p-1}) = O^p(S_{p-1}) = S_{p-1}\) as it has \(p'\)-order. Then by the previous corollary we can say that \(\H^1(G,V) = 0\) for any irreducible \(V\) of dimension greater than \(p\), or using \cref{PermCompositionFactor} we can say that \(\H^1(G,V) = 0\) for all nonlinear \(V\) apart from the unique nontrivial constituent of the permutation module on \(S_{p-1}\). Note that \(\H^1(G,k) = \Hom(G/G', k) = 0\) since \(p\) is odd and \([G : G'] = 2\) (where \(G' \coloneqq [G, G]\)).

A major application of these results is to the area of finite groups of Lie type in cross characteristic, since any Borel subgroup of such groups satisfies the conditions on \(H\) of all above theorems. By a finite group of Lie type here we mean a group \(\bm{G}^F\) as defined in \cite[31]{CarterFinite}, including twisted groups, or any quotients or subgroups of \(\bm{G}^F\) with the same socle. This then gives the following theorem as a special case.

\begin{thmcite}[{\cite[Theorem 2.2]{GuralnickTiep}}] \label{GTTheorem}
	Let \(G\) be a finite group of Lie type defined in characteristic \(p \neq r\) and let \(B \leq G\) be a Borel subgroup. For an irreducible \(kG\)-module \(V\) with \(V^B = 0\) we have that \(\dim \H^1(G,V)\) is the multiplicity of \(V^*\) in \(\soc \permk\). Alternatively, \(\dim \H^1(G,V) = \dim \Hom(V^*, \permk)\).
\end{thmcite}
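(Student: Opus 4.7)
The plan is to recognise this statement as an immediate special case of \cref{H1Result}. All that needs to be done is to verify the hypothesis $O_{r'}(B) = O^r(B)$ for a Borel subgroup \(B\) of a finite group of Lie type defined in characteristic \(p \neq r\), and then to reinterpret \(\dim \Hom(V^*, \permk)\) as a socle multiplicity for irreducible \(V\).

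For the hypothesis, I would write \(B = U \rtimes T\) where \(U = O_p(B)\) is the unipotent radical and \(T\) is a complement of order coprime to \(p\) (so in particular \(B/U \cong T\) is abelian). Since \(r \neq p\), the group \(U\) is an \(r'\)-group. Decompose \(T = T_r \times T_{r'}\) into its Sylow \(r\)-subgroup and Hall \(r'\)-subgroup; then \(T_{r'}\) is characteristic in the abelian group \(T\), so its preimage \(UT_{r'}\) in \(B\) is normal. This preimage is an \(r'\)-group with quotient \(B/UT_{r'} \cong T_r\) an \(r\)-group. Maximality of \(O_{r'}(B)\) gives \(O_{r'}(B) \supseteq UT_{r'}\), and conversely any normal \(r'\)-subgroup of \(B\) must project into \(T_{r'}\), forcing equality. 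Minimality of \(O^r(B)\) gives \(O^r(B) \subseteq UT_{r'}\), and conversely the image of the \(r'\)-group \(UT_{r'}\) in the \(r\)-group \(B/O^r(B)\) must be trivial, giving the reverse containment. Thus \(O_{r'}(B) = O^r(B) = UT_{r'}\), as required.

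With the hypothesis verified, \cref{H1Result} immediately yields \(\dim \H^1(G,V) = \dim \Hom(V^*, \permk)\). For the socle interpretation, since \(V\) is irreducible so is \(V^*\), and every nonzero \(G\)-homomorphism from \(V^*\) into \(\permk\) factors through an injection onto a simple submodule of \(\permk\) isomorphic to \(V^*\); as \(k\) is algebraically closed, Schur's lemma gives \(\End_G(V^*) = k\), so \(\dim \Hom(V^*, \permk)\) counts exactly the multiplicity of \(V^*\) as a composition factor of \(\soc \permk\).

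The main (and very mild) obstacle is purely the structural computation of \(O_{r'}(B)\) and \(O^r(B)\); once \cref{H1Result} is available, the content is a one-line invocation, and the hypothesis \(r \neq p\) enters in an essential way (if \(r = p\), then \(U\) is an \(r\)-group and the equality \(O_{r'}(B) = O^r(B)\) would fail in general, so the generalisation genuinely requires the cross-characteristic setting).
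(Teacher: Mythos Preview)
Your proposal is correct and follows exactly the paper's approach: the paper presents this theorem as an immediate special case of \cref{H1Result}, noting only that ``any Borel subgroup of such groups satisfies the conditions on \(H\)'' without spelling out the verification of \(O_{r'}(B) = O^r(B)\). Your argument supplies precisely those details that the paper leaves implicit, and your socle reinterpretation matches the paper's remark following \cref{H1Result}.
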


Though our result also generalises to \(\H^n(G,V) \cong \Ext_G^{n-1}(V^*, \permk)\) and does not require that \(V\) be irreducible.

\begin{notation*}
For the remainder of the section, assume that \(G\) is a finite group and \(H \leq G\) such that \(O_{r'}(H) = O^r(H)\) and \(\perm \coloneqq \Ind_H^G k\).
\end{notation*}

We can use \cref{GeneralTheorem} repeatedly and the fact that \(\Ext_G^{n-1}(V^*, \permk) \cong \H^n(G, V \otimes \permk)\) to obtain \(\H^n(G,V)\) as the first cohomology of some tensor product of modules. 
Thus if enough can be understood about such tensor products, we could obtain another avenue by which to attack higher cohomology. Further, the following result is immediate from \eqref{BestSequence}.

\begin{propn} \label{NoDivideH}
	Let \(H \leq G\) with \(r \nmid \abs{H}\) and \(V\) a \(kG\)-module. Then
	\[\dim H^1(G,V) = \dim \Hom(V^*, \permk) - \dim V^H\]
	and
	\[\H^n(G,V) \cong \Ext_G^{n-1}(V^*, \permk) \cong \H^{n-1}(G, V \otimes \permk)\]
	for any \(n \geq 2\).
\end{propn}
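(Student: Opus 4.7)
The hypothesis $r \nmid \abs{H}$ forces $O_{r'}(H) = H = O^r(H)$, so the general machinery of \cref{GeneralTheorem} is available; the key distinction is that we no longer assume $V^H = 0$, so $\H^0(H, V)$ may be nonzero. To compensate, Maschke's theorem now gives the stronger vanishing $\H^n(H, V) = 0$ for all $n \geq 1$ since $kH$ is semisimple, and this is what will be used in place of \cref{HHomis0}.

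The plan is then to invoke the long exact sequence $(\star)$ appearing in the proof of \cref{GeneralTheorem},
\[\cdots \to \H^{n-1}(H,V) \to \Ext_G^{n-1}(V^*, \permk) \to \H^n(G,V) \to \H^n(H,V) \to \cdots,\]
and read off the two claims by trimming at the appropriate places. For $n \geq 2$, both flanking $\H$-groups vanish by Maschke, yielding $\H^n(G, V) \cong \Ext_G^{n-1}(V^*, \permk)$ exactly as in \cref{GeneralTheorem}.

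For $n = 1$, the relevant tail of the long exact sequence used to derive $(\star)$ reads
\[0 \to \Hom_G(V^*, k) \to \Hom_G(V^*, \perm) \to \Hom_G(V^*, \permk) \to \H^1(G,V) \to 0,\]
since $\H^1(H, V) = 0$. One then identifies the leftmost terms: by Frobenius reciprocity together with the natural isomorphism $V^{**} \cong V$, $\Hom_G(V^*, \perm) \cong \Hom_H(V^*, k) \cong V^H$, and similarly $\Hom_G(V^*, k) \cong V^G$. An alternating dimension count on this four-term exact sequence then produces the first displayed equality.

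Finally, the second isomorphism $\Ext_G^{n-1}(V^*, \permk) \cong \H^{n-1}(G, V \otimes \permk)$ is a formal consequence of tensor-hom adjunction over a field: since $V$ is finite-dimensional, $- \otimes V$ is exact and adjoint to $- \otimes V^*$, so for any $kG$-module $W$ one obtains $\Ext_G^n(V^*, W) \cong \Ext_G^n(k, V \otimes W) = \H^n(G, V \otimes W)$; applied with $W = \permk$ this gives the desired identification. There is no substantive obstacle in the argument; the only point requiring care is identifying the flanking $V^H$ and $V^G$ contributions in the $n = 1$ segment of $(\star)$ correctly.
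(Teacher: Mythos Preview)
Your approach matches the paper's exactly: the paper simply asserts the result is ``immediate from \eqref{BestSequence}'', and you have correctly spelled out what that means, including the identification \(\Ext_G^{n-1}(V^*,\permk)\cong \H^{n-1}(G,V\otimes\permk)\) via tensor--hom adjunction.

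There is, however, one point you glide over that deserves attention. Your four-term exact sequence
\[0 \to \Hom_G(V^*,k) \to \Hom_G(V^*,\perm) \to \Hom_G(V^*,\permk) \to \H^1(G,V) \to 0\]
is correct, as are the identifications \(\Hom_G(V^*,k)\cong V^G\) and \(\Hom_G(V^*,\perm)\cong V^H\). But the alternating dimension count then gives
\[\dim \H^1(G,V) = \dim \Hom(V^*,\permk) - \dim V^H + \dim V^G,\]
not the formula displayed in the statement. The extra \(\dim V^G\) term is genuinely needed: take \(G=C_r\), \(H=1\), \(V=k\), where the stated formula predicts \(\dim\H^1(G,k)=0\) while in fact it is \(1\). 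So the statement as printed carries a small error which your argument should have caught rather than confirmed; the paper's one-line proof conceals the same slip. Your reasoning is otherwise sound and proves the corrected identity.
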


An important note here is that this retrieves the main result for higher cohomology without requiring that \(V^H = 0\), and gives an easy calculation to determine \(\H^1(G,V)\) when \(V^H \neq 0\).

Often, it is far easier to determine \(\Hom\) between modules than it is to determine \(\Ext_G^n\). As such, we repeatedly apply \cref{NoDivideH} to obtain the following result which does not require explicit calculation of \(\Ext\).

\begin{propn} \label{HomIsEasierThanExt}
	Let \(H \leq G\) such that \(r \nmid \abs{H}\) and \(V\) be a \(kG\)-module. Then 
	\[\dim \H^2(G,V) = \dim \GHom(V^*, (\permk)^{\otimes 2}) - \dim \Hom_H(V^*, \permk).\]
\end{propn}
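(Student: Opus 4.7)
The plan is to iterate \cref{NoDivideH}. By the final isomorphism of that proposition applied at \(n = 2\), we have \(\H^2(G, V) \cong \H^1(G, V \otimes \permk)\), so it suffices to evaluate \(\dim \H^1(G, W)\) for \(W \coloneqq V \otimes \permk\). Applying the first formula of \cref{NoDivideH} to \(W\) (in place of the \(V\) appearing there) gives
\[\dim \H^1(G, V \otimes \permk) = \dim \GHom\bigl((V \otimes \permk)^*, \permk\bigr) - \dim (V \otimes \permk)^H.\]

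The next step is to rewrite each of these two quantities so that they match the terms in the claim, using only standard identities for finite-dimensional \(kG\)-modules. For the first quantity, \((V \otimes \permk)^* = V^* \otimes (\permk)^*\), so the tensor-Hom adjunction \(\GHom(A \otimes B, C) \cong \GHom(A, B^* \otimes C)\) (with \(A = V^*\), \(B = (\permk)^*\), \(C = \permk\)) together with reflexivity of \(\permk\) yields
\[\GHom\bigl((V \otimes \permk)^*, \permk\bigr) \cong \GHom\bigl(V^*, (\permk)^{\otimes 2}\bigr).\]
For the second quantity, the standard identity \(\Hom_H(A, B) \cong (A^* \otimes B)^H\), applied with \(A = V^*\) and \(B = \permk\) (both restricted to \(H\)), gives
\[(V \otimes \permk)^H \cong \Hom_H(V^*, \permk).\]

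Substituting these identifications back into the formula from the first step produces exactly the stated expression for \(\dim \H^2(G, V)\). The argument is essentially bookkeeping: the only step requiring care is the direction of the tensor-Hom adjunction, so that the first term lands on \((\permk)^{\otimes 2}\) rather than on an expression involving \((\permk)^*\). No other serious obstacle is anticipated, since the case \(n = 2\) of \cref{NoDivideH} and the two adjunctions used above are all that is needed.
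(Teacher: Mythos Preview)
Your proposal is correct and follows essentially the same approach as the paper: both apply \cref{NoDivideH} twice (first to reduce \(\H^2\) to \(\H^1\), then to express \(\H^1\) via \(\GHom\) minus \(H\)-fixed points) and then use the same tensor--Hom adjunctions to rewrite the two resulting terms. The only cosmetic difference is that the paper passes through the intermediate expression \(\Hom_H(k, V \otimes \permk)\) before reaching \(\Hom_H(V^*, \permk)\), whereas you go there directly.
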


\begin{proof}
	Applying \cref{NoDivideH} to \(\H^2(G,V)\), then again to \(\H^1(G, V \otimes \permk)\) gives
	\begin{align*}
		\dim \H^2(G,V)					&= \dim \H^1(G,V \otimes \permk) 	\\
										&= \dim \GHom((V \otimes \permk)^*, \permk) - \dim (V \otimes \permk)^H \\
										&= \dim \GHom(V^*, (\permk)^{\otimes 2}) - \dim \Hom_H(k, V \otimes \permk) \\
										&= \dim \GHom(V^*, (\permk)^{\otimes 2}) - \dim \Hom_H(V^*, \permk)
	\end{align*}
	as claimed.
\end{proof}

As with \cref{GTTheorem}, the above result reduces the study of \(\H^2(G, V)\) for any \(V\) to the study of the structure of the fixed modules \((\permk)^{\otimes 2}\) and \(\permk\), and in the case where \(V^H = 0\) one may use Mackey's formula to obtain more information (which may be particularly useful if \(H = B\) is a Borel subgroup). A very similar method generalises this to all \(n\) with increasing tensor powers of \(\permk\).

To close out the section, we give some results and an important piece of notation which shall feature heavily in the remainder of this article.

\begin{defncite}[{\cite[46]{DavidGuidebookRepTheory}}]
	Let \(V\) be a \(kG\)-module with projective cover \(\PC(V)\). Then we define the \emph{Heller translate} or \emph{syzygy} \(\Omega V\) of \(V\) to be the kernel of the projective covering map \(\PC(V) \surj V\). So \(\Omega V\) is a module such that 
	\[0 \to \Omega V \to \PC(V) \to V \to 0\]
	is exact. We then define \(\Omega^{i+1} V \coloneqq \Omega (\Omega^i V))\). 
\end{defncite}

The Heller translate has a variety of nice properties. The most useful for our purposes will be the following two lemmas.

\begin{lemmacite}[{\cite[Proposition 1]{HellerIndecomposable}}] \label{HellerIndecomposable}
	The Heller translate \(\Omega\) is a permutation on the set of isomorphism classes of non-projective indecomposable \(kG\)-modules.
\end{lemmacite}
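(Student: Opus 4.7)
The plan is to exploit that \(kG\) is a Frobenius algebra, so every projective \(kG\)-module is also injective and each indecomposable projective has a simple socle. I would define an inverse operator \(\Omega^{-1}\) using injective envelopes: for a \(kG\)-module \(W\), let \(I(W)\) denote its injective envelope and put \(\Omega^{-1} W\) to be the cokernel of the essential embedding \(W \inj I(W)\). The goal is then to verify that both \(\Omega\) and \(\Omega^{-1}\) send non-projective indecomposables to non-projective indecomposables and that they are mutually inverse on isomorphism classes.

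First, \(\Omega V\) is non-projective whenever \(V\) is: if \(\Omega V\) were projective, then by self-injectivity of \(kG\) it would also be injective, so the sequence \(0 \to \Omega V \to \PC(V) \to V \to 0\) would split and \(V\) would be a direct summand of the projective module \(\PC(V)\), a contradiction. Next, \(\Omega V\) is indecomposable because \(\PC(V)\) is in fact an injective envelope of \(\Omega V\): it is injective by self-injectivity, and the inclusion \(\Omega V \inj \PC(V)\) is essential because the indecomposable projective \(\PC(V)\) has a simple socle contained in every nonzero submodule of \(\PC(V)\), and \(\Omega V\) is nonzero. Since injective envelopes are unique up to isomorphism and \(\PC(V)\) is indecomposable, any decomposition \(\Omega V \cong A \oplus B\) would yield \(\PC(V) \cong I(A) \oplus I(B)\), forcing one summand to vanish.

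A symmetric argument, using that indecomposable injective \(kG\)-modules coincide with indecomposable projectives and have simple heads, shows that \(\Omega^{-1}\) likewise preserves non-projectivity and indecomposability. Moreover, the very same sequence \(0 \to \Omega V \to \PC(V) \to V \to 0\), now read as an injective envelope sequence for \(\Omega V\), gives \(\Omega^{-1}(\Omega V) \cong V\); dually one obtains \(\Omega(\Omega^{-1} W) \cong W\), and so \(\Omega\) is a permutation on the isomorphism classes of non-projective indecomposable \(kG\)-modules. The main obstacle is the identification of \(\PC(V)\) as the injective envelope of \(\Omega V\): this uses both the Frobenius property (so that \(\PC(V)\) is injective at all) and the fact that \(\PC(V)\) has simple socle (to ensure \(\Omega V \inj \PC(V)\) is essential). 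Once that identification is in place, the bijectivity falls out of the uniqueness of injective envelopes and symmetric reasoning for \(\Omega^{-1}\).
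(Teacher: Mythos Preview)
Your argument is correct and is essentially the classical proof of Heller's result: use self-injectivity of \(kG\) to identify \(\PC(V)\) with the injective envelope of \(\Omega V\), conclude indecomposability from the simple socle, and exhibit \(\Omega^{-1}\) via injective envelopes as a two-sided inverse.

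Note, however, that the paper does not supply its own proof of this statement. It is stated as a cited lemma with reference to Heller's original article \cite[Proposition 1]{HellerIndecomposable}, and is used as a black box thereafter. So there is no proof in the paper to compare against; your write-up is a clean version of the standard textbook argument one would find behind that citation.
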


In particular, this tells us that the Heller translate of an indecomposable module is itself indecomposable. 

\begin{lemmacite}[{\cite[Lemma 1]{AlperinOmega}}] \label{OmegaCohomology}
	Let \(U\), \(V\) be \(kG\)-modules with \(V\) irreducible. Then, for any \(n \geq 0\),
	\[\Ext_G^n(U,V) \cong \Hom_G(\Omega^n U, V).\]
\end{lemmacite}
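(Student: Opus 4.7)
The plan is to proceed by induction on \(n\), shifting dimensions via the defining short exact sequence for the Heller translate. For each \(m \geq 1\) one has
\[0 \to \Omega^m U \to \PC(\Omega^{m-1} U) \to \Omega^{m-1} U \to 0,\]
and applying \(\Hom_G(-, V)\) gives a long exact sequence in \(\Ext_G^*(-, V)\). Since \(\PC(\Omega^{m-1} U)\) is projective, every higher \(\Ext\) group with it in the first argument vanishes, so the connecting homomorphisms yield isomorphisms \(\Ext_G^{k-1}(\Omega^m U, V) \cong \Ext_G^k(\Omega^{m-1} U, V)\) for all \(k \geq 2\). Iterating \(n - 1\) times reduces the problem to the case \(n = 1\) applied to \(\Omega^{n-1} U\), that is, to showing \(\Ext_G^1(\Omega^{n-1} U, V) \cong \Hom_G(\Omega^n U, V)\).

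The key step, and the only place where the irreducibility of \(V\) intervenes, is this \(n = 1\) case. The long exact sequence yields
\[\Hom_G(\PC(U), V) \xrightarrow{\alpha} \Hom_G(\Omega U, V) \to \Ext_G^1(U, V) \to 0,\]
so it suffices to prove that \(\alpha\) is the zero map. I would argue as follows: \(\PC(U) \surj U\) is by definition an essential epimorphism, hence \(\Omega U \leq \rad \PC(U)\). If \(f \colon \PC(U) \to V\) is nonzero then, since \(V\) is irreducible, \(f\) is surjective and \(\ker f\) contains \(\rad \PC(U)\) (because \(V\) has zero radical), so \(f\) restricts trivially to \(\Omega U\). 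Therefore \(\alpha = 0\) and we obtain the desired isomorphism.

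Combining the two steps yields \(\Ext_G^n(U, V) \cong \Hom_G(\Omega^n U, V)\) for all \(n \geq 1\); the \(n = 0\) case is immediate from the convention \(\Omega^0 U = U\). The main obstacle is precisely the verification that \(\alpha\) vanishes, because all the dimension-shifting is purely formal once one has the projectivity of \(\PC(\Omega^{m-1} U)\). Pulling in the irreducibility of \(V\) at exactly the right moment is what converts the final \(\Ext^1\) into a \(\Hom\) and produces the clean statement of the lemma.
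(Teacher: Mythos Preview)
Your argument is correct. The dimension-shifting via the long exact sequence attached to \(0 \to \Omega^m U \to \PC(\Omega^{m-1} U) \to \Omega^{m-1} U \to 0\) is standard, and your handling of the base case is exactly right: the restriction map \(\alpha\) vanishes because \(\Omega U \subseteq \rad \PC(U)\) while any nonzero \(f \colon \PC(U) \to V\) with \(V\) simple must factor through \(\PC(U)/\rad \PC(U)\).

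Note, however, that the paper does not actually prove this lemma: it is stated as a cited result (\texttt{lemmacite}) from \cite[Lemma 1]{AlperinOmega}, with no proof given in the paper itself. So there is no in-paper argument to compare against. Your proof is essentially the one Alperin gives, and is the expected short argument for this fact.
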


\begin{defn} \label{PeriodicDef}
	A module \(V\) is said to be \emph{periodic} of \emph{period} \(n\) if \(\Omega^n V \cong V\) for some \(n \geq 1\).
\end{defn}

From \cref{OmegaCohomology} we see that one can determine \(\dim \Ext_G^n(U,V)\) by looking at the multiplicity of the irreducible module \(V\) in the head of \(\Omega^n U\), so \(\H^n(G,V)\) is simply the multiplicity of \(V\) in \(\head \Omega^n k\). Similarly, by noting that \(\H^n(G,V) \cong \Ext_G^n(V^*, k) \cong \Hom_G(\Omega^n V^*, k)\), we can see that \(\H^n(G,V)\) is also the multiplicity of \(k\) in \(\head \Omega^n V^*\). Combining this with the definition above, we see that periodic modules have periodic cohomology.

\begin{propn} \label{LonelyModule}
	Let \(B\) be a block containing a single non-projective simple module \(V\) with a cyclic defect group. Then \(\Ext_G^n(V,V) \cong k\) for all \(n\).
\end{propn}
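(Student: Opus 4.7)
The plan is to show that $V$ is periodic with $\Omega^2 V \cong V$, and then invoke \cref{OmegaCohomology} to obtain the result.

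Since $B$ has cyclic defect group and exactly one non-projective simple module $V$, the Brauer tree of $B$ (see \cite[Chapter V]{AlperinLocal}) consists of a single edge joining two vertices, one exceptional with some multiplicity $m$ and one non-exceptional. With only one edge, the heart of $\PC(V)$ degenerates to a single uniserial module, so $\PC(V)$ itself is uniserial; moreover, all of its composition factors are $V$, since $V$ is the only simple module in $B$.

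It follows that $\Omega V = \rad \PC(V)$ is uniserial of length one less than $\PC(V)$, with head and socle both isomorphic to $V$. The projective cover of $\Omega V$ is therefore $\PC(V)$ itself, and a length count shows that $\Omega^2 V$ has length $1$, so $\Omega^2 V \cong V$. Hence $V$ is periodic of period dividing $2$. Applying \cref{OmegaCohomology}, for each $n \geq 0$ we have $\Ext_G^n(V, V) \cong \Hom(\Omega^n V, V)$, and by periodicity $\Omega^n V$ is isomorphic to either $V$ or $\Omega V$ depending on parity. Both modules have simple head $V$, so in each case the Hom space is one-dimensional, as claimed.

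The main obstacle is the first step: justifying that $\PC(V)$ is uniserial in this situation. This rests on the Brauer tree classification of cyclic defect blocks, including the fact that the heart of each PIM decomposes into two uniserial pieces, one per endpoint of the corresponding edge. Once this structural input is granted, the remainder of the argument reduces to straightforward length-counting and a single application of Schur's lemma.
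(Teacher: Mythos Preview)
Your proof is correct but takes a different route from the paper. The paper argues in two lines: since \(V\) is non-projective and the only simple in its block, each \(\Omega^n V\) is a nonzero indecomposable in \(B\), so its head must contain \(V\), giving \(\Ext_G^n(V,V)\neq 0\); then it invokes the standing fact (recorded earlier in the paper) that for a block with cyclic defect \(\dim\Ext_G^n(U,W)\leq 1\) for \(U\) indecomposable and \(W\) irreducible. You instead compute \(\PC(V)\) explicitly from the one-edge Brauer tree, deduce \(\Omega^2 V\cong V\) by a length count, and read off that both \(V\) and \(\Omega V\) have simple head \(V\). The paper's approach is shorter because it uses the cyclic-defect upper bound as a black box; your approach is more structural and avoids that black box, at the price of appealing to the Brauer tree description of \(\PC(V)\). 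Both arguments ultimately rest on the same cyclic-defect theory, just invoked at different points.
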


\begin{proof}
	Since \(V\) is not projective and is the only module in its block, we must have that \(V\) lies in the head of \(\Omega^n V\) for all \(n\). Thus \(\Ext_G^n(V,V) \neq 0\) for all \(n\). But since the defect group of \(B\) is cyclic, we know that \(\dim \Ext_G^n(V,V) \leq 1\) for all \(n\) and so we are done.
\end{proof}

\section{Cohomology of \texorpdfstring{\(\PSL_2(q)\)}{PSL(2,q)}} \label{sec:PSL2Cohomology}

Let \(p\) be a prime, \(G \coloneqq \PSL_2(q)\) for \(q = p^n\) and let \(k\) be an algebraically closed field of characteristic \(r \neq p\). In this section, we determine the cohomology of all irreducible \(kG\)-modules \(V\). We reproduce the character table for \(\PSL_2(q)\) for \(q \equiv 1 \mod 4\) below for convenience; the other cases are similar and may be found in \cite{BurkhardtPSLDecomposition}. There are three cases to consider, \(q \equiv 1 \mod 4\), \(q \equiv 3 \mod 4\) and \(q\) even. In \cref{1mod4CharacterTable}, \(q \equiv 1 \mod 4\), \(\gamma\) and \(\delta\) are distinct elements of order \(p\), and \(\alpha\) and \(\beta\) are elements of maximum order dividing \(q-1\) and \(q+1\), respectively. Also, \(\rho\) and \(\sigma\) are primitive \(\abs{\alpha}\)\upth{} and \(\abs{\beta}\)\upth{} roots of unity in \(\mathbb{C}\), respectively. Further, \(i \leq \frac{1}{4}(q-5)\), and \(j\), \(l\), \(m \leq \frac{1}{4}(q-1)\) are positive integers.

\begin{table}[h]
	\[
		\begin{array}{ccccccc} \toprule
						& 	1 					&	\gamma 							& 	\delta 							& 	\alpha^l 						& 	\beta^m							\\	\midrule
			1 			& 	1 					&	1 								& 	1 								& 	1 								& 	1								\\
			\xi_1 		& 	\frac{1}{2}(q+1) 	&	\frac{1}{2} (1 + \sqrt{q}) 		& 	\frac{1}{2} (1 - \sqrt{q}) 		& 	(-1)^l 							& 	0								\\
			\xi_2 		& 	\frac{1}{2}(q+1) 	&	\frac{1}{2} (1 - \sqrt{q}) 		& 	\frac{1}{2} (1 + \sqrt{q}) 		& 	(-1)^l 							& 	0								\\
			\theta_j 	& 	q-1 				&	-1 								& 	-1 								& 	0 								& 	- (\sigma^{jm} + \sigma^{-jm})	\\
			\varphi 	& 	q 					&	0 								& 	0 								& 	1 								& 	-1								\\
			\chi_i 		& 	q+1 				&	1 								& 	1 								& 	\rho^{il} + \rho^{-il} 			& 	0								\\	\bottomrule
		\end{array}
	\]
	\caption{Character table for \(\PSL_2(q)\), \(q \equiv 1 \mod 4\).} \label{1mod4CharacterTable}
\end{table}

The specific decomposition of the ordinary characters into Brauer characters depends upon whether \(r\) divides \(q-1\), \(q+1\) or both (\emph{i.e.} \(r = 2\)) and we treat each of these cases individually when needed. However, there are some pieces of information that remain similar or the same across all cross characteristic cases and so we outline these before we begin. \label{PSL2Irreducibles}

In particular, the list of character degrees for \(G\) does not change too much and we can give consistent notation for the irreducible \(kG\)-modules for the remainder of this section. To find the construction of the irreducible characters in characteristic zero, see \cite[\textsection 38]{LarryA} and combine with \cite[\nopp I--VI, VIII]{BurkhardtPSLDecomposition} for an elementary description of the irreducible characters in the modular case. A thorough (less-elementary) discussion of this for \(\SL_2(q)\) may also be found in \cite[Chapter 9]{SL2Representations} and then the irreducible representations of \(\PSL_2(q)\) are those which have \(Z(\SL_2(q))\) in their kernels.

Fix a Borel subgroup \(B = Q \rtimes T \leq G\) with unipotent radical \(Q \in \Syl_p G\) and maximal torus \(T\), cyclic of order \(\frac{1}{(q-1, 2)}(q-1)\). We have a family of \(kG\)-modules of dimension \(q+1\) obtained by inducing linear \(kB\)-modules to \(G\). Let \(\{k = X_1, X_2, \ldots, X_a\}\) be a set of representatives of \(N_G(T)\) conjugacy classes of 1-dimensional irreducible \(kT\)-modules, inflated to \(B\). The module \(\perm \coloneqq \Ind_B^G k\) contains at most two nontrivial irreducible constituents, though the precise structure of this module varies on a case by case basis. When \(r \neq 2\), we denote the unique nontrivial constituent of \(\perm\) by \(V\) and when \(r = 2\) we denote the two nontrivial irreducible constituents of \(\perm\) by \(V\) and \(W\).

Suppose \(i > 1\). Except for one case when \(q \equiv 1 \mod 4\), \(\Ind_B^G X_i \eqqcolon M_i\) is irreducible of dimension dividing \(q+1\). When \(q \equiv 1 \mod 4\), we let \(M_a\) denote the reducible module so obtained. Then \(\Ind_B^G X_a \eqqcolon M_a\) has two distinct irreducible constituents of dimension \(\frac{1}{2}(q+1)\) which we shall denote \(M_{a1}\) and \(M_{a2}\). In a similar vein, \(G\) has modules of dimensions dividing \(q-1\) which are obtained as constituents of nontrivial modules induced from a Singer cycle, \(S\). Let \(\{N_1, \ldots, N_b\}\) denote the modules obtained in this way up to \(N_G(S)\)-conjugacy (see \cite[\textsection38, Steps 6, 7]{LarryA}). Except for \(q \equiv 3 \mod 4\), all of these modules are irreducible of dimension \(q - 1\). When \(q \equiv 3 \mod 4\), we choose notation so that \(N_b\) is reducible. Then \(N_b\) has two distinct irreducible constituents of dimension \(\frac{1}{2}(q-1)\) which we shall denote \(N_{b1}\) and \(N_{b2}\).

The modules described above form a complete set of irreducible modules for \(G\) up to isomorphism.

\subsection*{Case 1: \(r\) odd} \label{PSL2Odd}

We first deal with the case where \(r\) is odd. In these cases, the Sylow \(r\)-subgroups of \(G\) are cyclic and so the structure of the projective modules in a block is determined by the {\itshape Brauer tree} of the block. The reader unfamiliar with Brauer trees should consult \cite[\textsection 17]{AlperinLocal} for more. In this section, we show the following.

\begin{thm} \label{PSL2MinusOneSummary}
	Let \(G \coloneqq \PSL_2(q)\) and \(k\) be an algebraically closed field of odd characteristic \(r \mid q-1\). Then the modules \(N_i\) (including \(N_{b1}\) and \(N_{b2}\) when \(q \equiv 3 \mod 4\)) are projective and \(\H^n(G, M_i) = 0\) for all \(n\). Further, 
	\[
		\H^n(G,k) \cong \begin{cases}
			0 & n \equiv 1, \ 2 \mod 4,\\
			k & n \equiv 0, \ 3 \mod 4,
		\end{cases} \qquad 
		\H^n(G,V) \cong \begin{cases}
			0 & n \equiv 0, \ 3 \mod 4,\\
			k & n \equiv 1, \ 2 \mod 4.
		\end{cases}
	\]
	with \(\Ext_G^n(V, V) \cong \H^n(G, k)\) and \(\Ext_G^n(V, k) \cong \H^n(G, V)\) for all \(n\). Finally, when \(q \equiv 1 \mod 4\), let \(Y\), \(Z \in \{M_{a1}, M_{a2}\}\) with \(Y \ncong Z\). Then
	\[
		\Ext_G^n(Y, Y) \cong \begin{cases}
			0	&	n \equiv 1, \ 2 \mod 4,\\
			k	&	n \equiv 0, \ 3 \mod 4,
		\end{cases} \qquad 
		\Ext_G^n(Y, Z) \cong \begin{cases}
			0	&	n \equiv 0, \ 3 \mod 4,\\
			k	& 	n \equiv 1, \ 2 \mod 4.
		\end{cases}
	\]
\end{thm}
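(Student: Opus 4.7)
The proof uses Brauer tree combinatorics. Since $r$ is odd and $r \mid q - 1$, the Sylow $r$-subgroup of $G$ sits inside the cyclic torus $T$ and is itself cyclic. Hence every $r$-block of $kG$ has cyclic defect group, its projective indecomposables are determined by its Brauer tree (see \cite[\textsection 17]{AlperinLocal}), and $\dim \Ext_G^n(U, S) \leq 1$ for any indecomposable $U$ and simple $S$ in the same block.

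First I would dispose of the easy statements via defect calculations. Computing $\abs{G}_r = (q-1)_r$ and using that $r$ is odd gives $(q-1)_r = ((q-1)/2)_r$, so $\abs{G}_r$ divides every dimension of the form $q-1$ or $(q-1)/2$. By the defect-zero criterion each $N_i$ (including the two constituents of $N_b$ when $q \equiv 3 \bmod 4$) lies in a block of defect zero, hence is projective simple. On the other hand every $M_i$ in the scope of the theorem has dimension dividing $q+1$ and therefore coprime to $r$, so cannot lie in a defect-zero block. The decomposition matrices of \cite{BurkhardtPSLDecomposition} (equivalently, a central-character calculation) show that every such $M_i$ lies in a non-principal block; since $k$ sits in the principal block, block orthogonality gives $\H^n(G, M_i) = \Ext_G^n(k, M_i) = 0$ for all $n$.

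The heart of the argument is identifying the Brauer tree of the principal block. By Brauer correspondence this tree coincides with that of the principal block of $N_G(T) = T \rtimes \gen{s}$, where $s$ acts on $T$ by inversion. Since $s$ fixes only the trivial character of $T$ within the principal block of $T$, the inertia index is $e = 2$ and the exceptional multiplicity is $m = ((q-1)_r - 1)/2$, so the principal block has exactly two modular simples; these must be $k$ and $V$, as $V$ is a composition factor of $\perm = \Ind_B^G k$. The Brauer tree is therefore a line of two edges with the exceptional vertex $\ast$ at one end, and the labelling is pinned down by $\H^1(G, k) = \Hom(G/[G,G], k) = 0$ (since $\PSL_2(q)$ is perfect for $q \geq 4$): the trivial module must sit on the edge \emph{away} from $\ast$. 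Executing Green's walk on this tree then shows that for $n = 1, 2, 3, 4$ the modules $\Omega^n k$ have simple heads $V, V, k, k$ and the modules $\Omega^n V$ have simple heads $k, k, V, V$, with $\Omega^4 k \cong k$ and $\Omega^4 V \cong V$. Reading off multiplicities in these heads via \cref{OmegaCohomology} delivers the claimed periodic patterns for $\H^n(G, k)$, $\H^n(G, V)$, $\Ext_G^n(V, V)$ and $\Ext_G^n(V, k)$.

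For $q \equiv 1 \bmod 4$, the pair $\{M_{a1}, M_{a2}\}$ sits in a common non-principal block whose Brauer correspondent arises from the unique non-trivial order-$2$ character of $T$ (which exists precisely because $\abs{T} = (q-1)/2$ is then even, and which is automatically $s$-fixed since $s$ acts by inversion). The inertia index is again $2$, the Brauer tree has the same shape as before, and the parallel Heller translate calculation gives the asserted Ext pattern. The main technical hurdle is executing Green's walk when $m > 1$: the PIMs $P(V)$ and $P(M_{a2})$ then become biserial rather than uniserial, and one must check carefully that in each case the simple head of every $\Omega^n k$ or $\Omega^n M_{a1}$ remains as stated so that the multiplicity count through \cref{OmegaCohomology} still produces the claimed period-four pattern.
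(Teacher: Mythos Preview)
Your overall strategy matches the paper's: reduce to Brauer-tree combinatorics and read off \(\Ext\) via heads of Heller translates. The defect-zero and block-orthogonality arguments for the \(N_i\) and \(M_i\) are fine. However, your identification of the Brauer tree of the principal block is wrong, and this propagates into an internal inconsistency.

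For \(r \mid q-1\) the exceptional characters in the principal block are the \(\chi_i\) of degree \(q+1\), and each decomposes modulo \(r\) as \(1 + \mathrm{St}\), i.e.\ as \(k + V\). Hence the exceptional vertex is adjacent to \emph{both} edges: the tree is
\[
\bullet \;\text{---}\; k \;\text{---}\; \ast \;\text{---}\; V \;\text{---}\; \bullet
\]
with the exceptional vertex in the \emph{middle} (case~2 in the paper), not at an end. Your criterion \(\H^1(G,k)=0\) does not distinguish the two shapes, since both give \(\head\Omega k \cong V\). The same holds for the block \(B_1\) containing \(M_{a1}, M_{a2}\).

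This matters for the computation you sketch. With the exceptional vertex in the middle, both \(\PC(k)\) and \(\PC(V)\) are uniserial, \(\Omega^2 k \cong V\), and all Heller translates have simple heads exactly as you state. But with the exceptional vertex at the end adjacent to \(V\) (your tree), \(\PC(V)\) is biserial with heart \(k \oplus Z\) where \(Z \sim [V \mid \cdots \mid V]\); then \(\Omega V \sim [k \oplus Z \mid V]\) has head \(k \oplus V\), not \(k\), and one would obtain \(\Ext_G^1(V,V) \cong k\), contradicting the theorem. So your asserted heads for \(\Omega^n V\) are correct for the actual tree but false for the tree you wrote down, and the closing remark about biserial \(\PC(V)\) confirms you were computing with the wrong shape. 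Fixing the tree (via the decomposition \(\chi_i = k + V\), or directly from \cite{BurkhardtPSLDecomposition}) makes everything uniserial and the rest of your argument goes through cleanly.
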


\begin{thm} \label{PSL2PlusOneSummary}
	Let \(G \coloneqq \PSL_2(q)\) and \(k\) be an algebraically closed field of odd characteristic \(r \mid q+1\). Then the modules \(M_i\) (including \(M_{a1}\) and \(M_{a2}\) when \(q \equiv 1 \mod 4\)) are projective and \(\H^n(G, N_i) = 0\) for all \(n\). Further, when the \(r\)-part of \(q+1\) is not 3, \(\Ext_G^n(V, V) \cong k\) for all \(n\)
	\[
		\H^n(G,k) \cong \begin{cases}
			0 & n \equiv 1, \ 2 \mod 4,\\
			k & n \equiv 0, \ 3 \mod 4,
		\end{cases} \qquad 
		\H^n(G,V) \cong \Ext_G^n(V, k) \cong \begin{cases}
			0 & n \equiv 0, \ 3 \mod 4,\\
			k & n \equiv 1, \ 2 \mod 4.
		\end{cases}
	\]
	If the \(r\)-part of \(q+1\) is 3, then instead \(\Ext_G^n(V, V) \cong \H^n(G, k)\) for all \(n\). Finally, when \(q \equiv 3 \mod 4\), let \(Y\), \(Z \in \{N_{b1}, N_{b2}\}\) with \(Y \ncong Z\). Then
	\[
		\Ext_G^n(Y, Y) \cong \begin{cases}
			0	&	n \equiv 1, \ 2 \mod 4,\\
			k	&	n \equiv 0, \ 3 \mod 4,
		\end{cases} \qquad 
		\Ext_G^n(Y, Z) \cong \begin{cases}
			0	&	n \equiv 0, \ 3 \mod 4,\\
			k	& 	n \equiv 1, \ 2 \mod 4.
		\end{cases}
	\]
\end{thm}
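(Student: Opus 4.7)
The plan is to follow the same strategy as the proof of \cref{PSL2MinusOneSummary}, swapping the roles of the Borel subgroup \(B\) and the Singer cycle \(S\): since \(r\) is odd with \(r \mid q+1\), we have \(r \nmid |B|\) and so \(B\) is an \(r'\)-subgroup of \(G\), while \(r\) does divide \(|S|\). Because \(kB\) is semisimple every \(X_i\) is a projective \(kB\)-module, so \(M_i = \Ind_B^G X_i\) is projective, and when \(q \equiv 1 \pmod 4\) both summands \(M_{a1}, M_{a2}\) of \(M_a\) inherit projectivity. Since each nontrivial \(M_i\) (resp.\ \(M_{aj}\)) is irreducible, these are precisely the PIMs of the associated defect-zero blocks, so the remaining blocks of interest contain the trivial module together with various \(N_i\) (and, when \(q \equiv 3 \pmod 4\), \(N_{b1}, N_{b2}\)).

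Next I would determine the block decomposition and the Brauer tree of the principal block \(B_0\). The Sylow \(r\)-subgroups of \(G\) lie in \(S\) and are cyclic, so every non-defect-zero block has cyclic defect and is described by a Brauer tree in the sense of \cite[\textsection17]{AlperinLocal}. A central-character computation using the values of \(\theta_j\) and \(\chi_i\) on \(S\) from \cref{1mod4CharacterTable} (and its analogues) together with the decomposition information of \cite{BurkhardtPSLDecomposition} distributes the \(N_i\) among these blocks along \(N_G(S)/S\)-orbits of \(r\)-power elements of \(S\), isolating which \(N_i\) (if any) lie in \(B_0\). Those \(N_i\) outside \(B_0\) immediately satisfy \(\H^n(G, N_i) = \Ext_G^n(k, N_i) = 0\) by block orthogonality.

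With the Brauer tree of \(B_0\) in hand, I would then compute \(\Omega^n k\) and \(\Omega^n V\) by walking around the tree as in \cite[V.6]{AlperinLocal} and apply \cref{OmegaCohomology} to read off the \(\Ext\) groups from the heads. In the generic situation (where the \(r\)-part of \(q+1\) exceeds \(3\)), the tree has \(k\) and \(V\) adjacent at one end with the remaining \(N_i\) running towards the exceptional vertex; the walk from \(k\) oscillates between \(k\) and \(V\) with period \(4\), giving the stated formulas for \(\H^n(G, k)\), \(\H^n(G, V) \cong \Ext_G^n(V, k)\) and \(\Ext_G^n(V, V) \cong k\), and never picks up an \(N_i\) in its head, which yields \(\H^n(G, N_i) = 0\) for the remaining indices as well. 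The case \(q \equiv 3 \pmod 4\) adds the modules \(N_{b1}, N_{b2}\); these sit in a separate cyclic-defect block \(B'\) whose Brauer tree is a single edge joining them with exceptional vertex at one end, and the same walking argument produces the period-\(4\) formulas for \(\Ext_G^n(Y, Y)\) and \(\Ext_G^n(Y, Z)\).

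The degenerate case where the \(r\)-part of \(q+1\) equals \(3\) is precisely where the theorem's case split originates and is the main obstacle. Here \(B_0\) collapses to just the two simples \(k\) and \(V\), and its Brauer tree is a single edge with an exceptional vertex carrying the extra multiplicity, so the walk from \(V\) is forced through \(k\) at exactly the same positions at which the walk from \(k\) returns to \(k\); this yields \(\Ext_G^n(V, V) \cong \H^n(G, k)\) rather than \(\cong k\) as in the generic case. The technical heart of the proof is thus the verification that this collapse of \(B_0\) to two simples happens exactly when \(3 \,\|\, q+1\) (with \(r = 3\)), which amounts to counting \(r\)-regular classes in \(B_0\) and correctly placing the exceptional vertex.
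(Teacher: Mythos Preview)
Your overall strategy---identify projectives, pin down the Brauer trees, read off $\Ext$ via Heller translates---matches the paper's, and your argument that the $M_i$ are projective (induction from the $r'$-group $B$) is cleaner than the paper's dimension check. The gap is in the Brauer tree of the principal block. For odd $r \mid q+1$ the principal block $B_0$ contains exactly the two simples $k$ and $V$; no $N_i$ lies in $B_0$. (This is in Burkhardt's decomposition data, which the paper simply cites; in your outline it would be the output of the central-character computation you promise but do not actually carry out.) The tree of $B_0$ is therefore always the two-edge line with edges $k$, $V$ and the exceptional vertex of multiplicity $m$ at the end adjacent to $V$. Your proposed longer tree, with further $N_i$ strung out towards the exceptional vertex, is internally inconsistent with your own conclusions: on such a line $\Omega^2 k$ already has an $N_i$ in its head, so $\H^2(G, N_i) \neq 0$ and $k$ does not have period $4$.

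Correspondingly, the degenerate case is not that $B_0$ ``collapses to two simples''---it always has two---but that the exceptional multiplicity $m$ drops to $1$, which happens exactly when the $r$-part of $q+1$ equals $3$. When $m > 1$ the self-extensions of $V$ at the exceptional end give $\Ext_G^n(V,V) \cong k$ for all $n$; when $m = 1$ the tree becomes symmetric in $k$ and $V$, forcing $\Ext_G^n(V,V) \cong \H^n(G,k)$ (this is \cref{PSL2OtherExtsCase1}). Similarly, the block containing $N_{b1}, N_{b2}$ (for $q \equiv 3 \bmod 4$) has two edges with the exceptional vertex in the \emph{middle}, not ``a single edge \ldots with exceptional vertex at one end''; the paper handles this different shape separately in \cref{PSL2OtherExtsCase2}.
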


In both of the above theorems, the projective irreducible modules can easily be determined as they are those which have dimensions divisible by the order of a Sylow \(r\)-subgroup (see \cite[Theorem 3.18]{NavarroCharactersBlocks}), and the cohomology of all modules outside the principal block is trivial since \(\Ext_G^n(Y, Z) = 0\) for all \(n\) and all irreducible \(Y\), \(Z\) which do not lie in the same block (see \cite[\textsection 13, Proposition 3]{AlperinLocal}.

For odd characteristic, the Brauer trees of these blocks are given in \cite{BurkhardtPSLDecomposition}. In the below discussion, \(B_0\) denotes the principal block and \(B_1\) denotes the block containing only \(M_{a1}\) and \(M_{a2}\) or \(N_{b1}\) and \(N_{b2}\), dependent on which pair exists (and is not projective) in the given situation.

The blocks for \(G\) containing two modules fall into two cases: case 1 holds for \(B_0\) whenever \(r\) is odd and \(r \mid q + 1\) and case 2 holds for \(B_0\) whenever \(r\) is odd and \(r \mid q - 1\), along with \(B_1\) whenever it exists. 

For some exceptional multiplicity \(m\), cases 1 and 2 respectively have Brauer trees
\[\begin{tikzpicture}
		\filldraw (-2,0) circle(0.1);
		\filldraw (0,0) circle(0.1);
		\filldraw (2,0) circle(0.1) node[below=5pt] {\(m\)};
		\draw (-1.9,0) -- (-0.1,0) node[pos=0.5,above=2pt] {\(Y_1\)};
		\draw (0.1,0) -- (1.9,0) node[pos=0.5,above=2pt] {\(Y_2\)};
	\end{tikzpicture} \qquad \qquad \qquad \qquad 
	\begin{tikzpicture}
		\filldraw (-2,0) circle(0.1);
		\filldraw (0,0) circle(0.1) node[below=5pt] {\(m\)};
		\filldraw (2,0) circle(0.1);
		\draw (-1.9,0) -- (-0.1,0) node[pos=0.5,above=2pt] {\(Y_1\)};
		\draw (0.1,0) -- (1.9,0) node[pos=0.5,above=2pt] {\(Y_2\)};
	\end{tikzpicture}\]
and Cartan matrices
\[\begin{pmatrix}
	2 & 1 \\ 1 & m+1
\end{pmatrix} \qquad \qquad \qquad \qquad \qquad \qquad
\begin{pmatrix}
	m+1 & m \\ m & m+1
\end{pmatrix}\]
where in case 1, the first column represents \(Y_1\) (corresponding to the trivial module for our purposes) and the second \(Y_2\) (corresponding to \(V\)). To describe the structure of the projective modules in this case, and in future, we use the following notation.

\begin{defn} \label{heartdef}
	Let \(M\) be a \(kG\)-module. The \emph{heart} \(\heart(M)\) of \(M\) is \((\rad M)/(\rad M \cap \soc M)\).
\end{defn}

\begin{defn} \label{ModuleShape}
	Given two \(kG\)-modules \(Y\) and \(Z\), we say \(Y \sim Z\) to indicate that \(Y\) and \(Z\) have the same radical factors, \emph{i.e.} \(\rad^{i-1} Y/\rad^i Y \cong \rad^{i-1} Z / \rad^i Z\) for all \(i\). Suppose that \(Y\) has radical factors \(Y_1\), \(Y_2\), \ldots, \(Y_n\). Then we denote the equivalence class of modules with this radical series by \([Y_1 \mid Y_2 \mid \ldots \mid Y_n]\) and say that \(Y \sim [Y_1 \mid Y_2 \mid \ldots \mid Y_n]\). Note that here we begin at the head and move through the radical layers, so that \(\head Y \cong Y_1\) and \(Y_n \inj \soc Y\). 

	Finally, given a uniserial \(kG\)-module \(Y\) with radical factors \(Y_1\), \ldots, \(Y_n\) and a semisimple \(kG\)-module \(Z\) we abuse this notation somewhat and write \([Y \mid Z]\) for a module of shape \([Y_1 \mid \ldots \mid Y_n \mid Z]\) and \([Y \oplus Z]\) for a module of shape \([Y_1 \oplus Z \mid \ldots \mid Y_n]\).
\end{defn}

Using the above trees, we see that in case 1, \(\PC(Y_1) \sim [Y_1 \mid Y_2 \mid Y_1]\) and \(\PC(Y_2)\) has heart \(Y_1 \oplus Z\) where \(Z \sim [Y_2 \mid Y_2 \mid \ldots \mid Y_2]\). In case 2, both projective covers are uniserial of the same length with \(\PC(Y_i) \sim [Y_i \mid Y_j \mid Y_i \mid \ldots \mid Y_j \mid Y_i]\) for \(i \neq j\). Note that when \(m = 1\) in case 1, we will get a different answer as \(\Ext_G^1(Y_2, Y_2) = 0\) and \(Z = 0\). This happens in \(\PSL_2(q)\) precisely when the \(r\)-part of \(q+1\) is 3.

\begin{propn} \label{PSL2OtherExtsCase1}
	Let \(B\) be a block with Brauer tree as in case 1 (for \(m \neq 1\)) with simple modules \(Y_1\), \(Y_2\). Then \(\Ext_G^n(Y_2,Y_2) \cong k\) for all \(n\), and, for \(i \neq j\),
	\[\Ext_G^n(Y_1,Y_1) \cong \begin{cases}
		0	&	n \equiv 1, \ 2 \mod 4,\\
		k	&	n \equiv 0, \ 3 \mod 4,
	\end{cases} \qquad 
	\Ext_G^n(Y_i,Y_j) \cong \begin{cases}
		0	&	n \equiv 0, \ 3 \mod 4,\\
		k	&	n \equiv 1, \ 2 \mod 4.
	\end{cases}\]
	If \(m = 1\), then instead \(\Ext_G^n(Y_2, Y_2) \cong \Ext_G^n(Y_1, Y_1)\) for all \(n\).
\end{propn}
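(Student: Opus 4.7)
By \cref{OmegaCohomology}, \(\dim \Ext_G^n(V, W)\) for irreducible \(W\) equals the multiplicity of \(W\) in \(\head \Omega^n V\), so the plan is to compute the Heller translates \(\Omega^n Y_i\) using the explicit PIM structures given above. The computation for \(Y_1\) is direct. From \(\PC(Y_1) \sim [Y_1 \mid Y_2 \mid Y_1]\) we read off \(\Omega Y_1 \sim [Y_2 \mid Y_1]\). For \(\Omega^2 Y_1\) apply the short exact sequence \(0 \to \Omega^2 Y_1 \to \PC(Y_2) \to \Omega Y_1 \to 0\): since \(\PC(Y_2)\) contains \(Y_1\) as a composition factor only in the ``left arm'' of its heart and this copy is consumed in \(\Omega Y_1\), the kernel must be the uniserial ``right arm plus socle'' submodule of \(\PC(Y_2)\), giving \(\Omega^2 Y_1 \sim [Y_2 \mid Y_2 \mid \dots \mid Y_2]\) of length \(m\). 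Similarly, \(\Omega^3 Y_1 \sim [Y_1 \mid Y_2]\), and a dimension count forces \(\Omega^4 Y_1 \cong Y_1\), so \(Y_1\) has period 4. Reading off the heads \(Y_1, Y_2, Y_2, Y_1\) at \(n = 0, 1, 2, 3\) yields the claimed formulas for \(\Ext_G^n(Y_1, Y_1)\) and \(\Ext_G^n(Y_1, Y_2)\); the formula for \(\Ext_G^n(Y_2, Y_1)\) then follows from the duality \(\Ext_G^n(V, W) \cong \Ext_G^n(W^*, V^*)\) together with the self-duality of all irreducible \(\PSL_2(q)\)-modules in cross characteristic.

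For \(Y_2\) we start from \(\Omega Y_2 = \rad \PC(Y_2)\), which has head \(Y_1 \oplus Y_2\) (the tops of the two arms), and \(\Omega^{-1} Y_2 = \PC(Y_2)/\soc \PC(Y_2)\), which has head \(Y_2\). Since \(B\) has cyclic defect group every indecomposable module is periodic, and the standard periodicity result for Brauer tree algebras with two edges gives period 4 for each simple; in particular \(\Omega^3 Y_2 \cong \Omega^{-1} Y_2\), so the \(Y_2\)-multiplicity in the head of \(\Omega^n Y_2\) is \(1\) at \(n = 0, 1, 3\). For \(n = 2\): the module \(\Omega^2 Y_2\) has composition factors \(2Y_1 + 2Y_2\) (by composition-factor conservation in the projective resolution) and is self-dual, using \((\Omega^2 Y_2)^* \cong \Omega^{-2} Y_2^* \cong \Omega^2 Y_2\) by period 4 and \(Y_2^* \cong Y_2\). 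From the previous paragraph the \(Y_1\)-multiplicity in the head of \(\Omega^2 Y_2\) is \(1\), so by self-duality the socle likewise contains exactly one \(Y_1\); tracking the total count of two \(Y_2\)s, the head can contain at most one \(Y_2\) (otherwise head plus socle already exceeds the total), and a dimension count rules out the head being \(Y_1\) alone, forcing the \(Y_2\)-multiplicity of the head to be exactly \(1\). Hence \(\Ext_G^n(Y_2, Y_2) \cong k\) for all \(n\).

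When \(m = 1\) the ``right arm'' of \(\PC(Y_2)\) is empty, so both PIMs become uniserial of shape \([Y_i \mid Y_j \mid Y_i]\), matching the case 2 structure, and the Brauer tree is symmetric under the interchange \(Y_1 \leftrightarrow Y_2\). The induced block automorphism swapping the two simples then gives \(\Ext_G^n(Y_2, Y_2) \cong \Ext_G^n(Y_1, Y_1)\) immediately. The main obstacle in the argument above is pinning down the head of \(\Omega^2 Y_2\); the self-duality combined with the composition-factor count sidesteps a direct and much more tedious realisation of \(\Omega^2 Y_2\) as a submodule of \(\PC(Y_1) \oplus \PC(Y_2)\), at the cost of invoking the standard period-4 result for Brauer tree algebras with two edges.
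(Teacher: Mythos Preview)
Your computation of \(\Omega^n Y_1\) matches the paper's exactly. The divergence is in how you handle \(Y_2\), and there is a genuine gap.

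You rely twice on self-duality of the simples: first to deduce \(\Ext_G^n(Y_2,Y_1)\) from \(\Ext_G^n(Y_1,Y_2)\), and then to pin down \(\head \Omega^2 Y_2\) via \((\Omega^2 Y_2)^* \cong \Omega^2 Y_2\). But the proposition is stated for an arbitrary block with this Brauer tree, and self-duality of \(Y_1,Y_2\) is not part of the hypothesis. Your justification, ``self-duality of all irreducible \(\PSL_2(q)\)-modules in cross characteristic'', is also false as stated: for \(q \equiv 3 \pmod 4\) the two modules of dimension \(\tfrac{1}{2}(q-1)\) are interchanged by duality (their ordinary characters take the non-real values \(\tfrac{1}{2}(-1\pm\sqrt{-q})\)). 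It \emph{is} true that \(k\) and \(V\) in the principal block are self-dual, so your argument goes through for the only case in which the paper actually applies this proposition --- but you have not proved the proposition as stated, and you have asserted something incorrect along the way.

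The paper avoids both issues by computing \(\Omega^n Y_2\) directly. From \(\Omega Y_2 \sim [Y_1 \oplus Z \mid Y_2]\) one covers by \(\PC(Y_1)\oplus\PC(Y_2)\); the socle \(Y_2\) of \(\Omega Y_2\) must sit diagonally in \(\soc\PC(Y_1)\oplus\soc Z\) (any other choice makes \(\Omega^2 Y_2\) decomposable, contradicting \cref{HellerIndecomposable}), giving \(\Omega^2 Y_2 \sim [Y_2 \oplus Y_1 \mid Y_1 \oplus Y_2]\). A similar diagonal argument yields \(\Omega^3 Y_2 \sim [Y_2 \mid Y_1 \oplus Z]\), whence \(\Omega^4 Y_2 \cong Y_2\). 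This determines all four heads without invoking duality or an external periodicity theorem, and so proves the abstract statement. Your self-duality shortcut is slicker when it applies, but here it costs you the generality the proposition claims.
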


\begin{proof}
	We proceed by computing \(\Omega^n Y_i\). This gives (immediately, since \(\Omega^n Y_1\) is uniserial for all \(n\)) \(\Omega^n Y_1\) of shapes \([Y_2 \mid Y_1]\), \([Z \mid Y_2]\) and \([Y_1 \mid Y_2]\) for \(n = 1\), 2, 3, and \(\Omega^4(Y_1) \cong Y_1\). When \(m = 1\), note that \(Z = 0\) and so \(\Omega^2 Y_1 \cong Y_2\) and so we can stop here (one may also deduce the required result from the fact that if \(m = 1\) then \(Y_1\) and \(Y_2\) may be swapped without loss of generality). Otherwise, when \(m > 1\), we must work harder.

	For \(Y_2\), the Heller translates are not all uniserial so some additional thought is required. We have \(\Omega Y_2 \sim [Y_1 \oplus Z \mid Y_2]\). The projective cover of this is then \(\PC(Y_1) \oplus \PC(Y_2)\), and the \(Y_2\) in \(\soc \Omega Y_2\) must stem from a diagonal submodule of \(Y_2 \oplus \soc Z\) (taking any module `higher up' in \(Z\) would result in \(\Omega^2 Y_2\) being decomposable) in \(\PC(Y_1) \oplus \PC(Y_2)\), yielding \(\Omega^2 Y_2 \sim [Y_2 \oplus Y_1 \mid Y_1 \oplus Y_2]\). Similarly, the \(Y_2\) in \(\soc \Omega^2 Y_2\) comes from a diagonal submodule of \(\head Z \oplus Y_2\) in \(\PC(Y_2) \oplus \PC(Y_1)\) and so we obtain \(\Omega^3 Y_2 \sim [Y_2 \mid Y_1 \oplus Z]\). From this it immediately follows that \(\Omega^4 Y_2 \cong Y_2\). The dimensions of \(\Ext_G^n\) may be read off from the shapes of these modules, and the result follows.
\end{proof}

\begin{propn} \label{PSL2OtherExtsCase2}
	Let \(B\) be a block with Brauer tree as in case 2 with simple modules \(Y_1\), \(Y_2\). Then, for \(i \neq j\), 
	\[\Ext_G^n(Y_i,Y_i) \cong \begin{cases}
		0	&	n \equiv 1, \ 2 \mod 4,\\
		k	&	n \equiv 0, \ 3 \mod 4,
	\end{cases} \qquad 
	\Ext_G^n(Y_i,Y_j) \cong \begin{cases}
		0	&	n \equiv 0, \ 3 \mod 4,\\
		k	& 	n \equiv 1, \ 2 \mod 4.
	\end{cases}\]
\end{propn}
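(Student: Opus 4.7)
The plan is to compute the Heller translates $\Omega^n Y_i$ for $n = 1, 2, 3, 4$ directly from the structure of the projective covers recorded above, show that $Y_1$ and $Y_2$ are $\Omega$-periodic of period $4$, and then read off the dimensions of $\Ext_G^n(Y_i, Y_l)$ via \cref{OmegaCohomology}. The argument is essentially a cleaner cousin of the proof of \cref{PSL2OtherExtsCase1}, helped by the fact that every Heller translate in sight will be uniserial.

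First I would note that the Cartan matrix and Brauer tree for case 2 force both $\PC(Y_1)$ and $\PC(Y_2)$ to be uniserial of composition length $2m + 1$, with $\PC(Y_i) \sim [Y_i \mid Y_j \mid Y_i \mid \ldots \mid Y_j \mid Y_i]$ (alternating, starting and ending in $Y_i$) for $\{i,j\} = \{1,2\}$. Taking radicals then gives $\Omega Y_i = \rad \PC(Y_i) \sim [Y_j \mid Y_i \mid \ldots \mid Y_j \mid Y_i]$, uniserial of length $2m$ with head $Y_j$ and socle $Y_i$.

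Next, since $\head(\Omega Y_i) \cong Y_j$, the projective cover of $\Omega Y_i$ is $\PC(Y_j)$, of composition length $2m+1$, so the short exact sequence $0 \to \Omega^2 Y_i \to \PC(Y_j) \to \Omega Y_i \to 0$ forces $\Omega^2 Y_i$ to have length $1$; any length-$1$ submodule of the uniserial $\PC(Y_j)$ lies in its socle, which is $Y_j$, so $\Omega^2 Y_i \cong Y_j$. Swapping the roles of $i$ and $j$ then gives $\Omega^3 Y_i \cong \Omega Y_j$ (uniserial of length $2m$ with head $Y_i$) and $\Omega^4 Y_i \cong Y_i$, so each $Y_i$ is $\Omega$-periodic of period exactly $4$.

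Finally, by \cref{OmegaCohomology}, $\dim \Ext_G^n(Y_i, Y_l)$ equals the multiplicity of $Y_l$ in $\head \Omega^n Y_i$. Reading off the heads of $Y_i, \Omega Y_i, \Omega^2 Y_i, \Omega^3 Y_i$ — namely $Y_i, Y_j, Y_j, Y_i$, each occurring with multiplicity $1$ — and iterating via the period-$4$ behaviour yields exactly the stated formulas. I do not expect any genuine obstacle: the whole argument is structural and avoids the diagonal-submodule analysis that was the only delicate step in \cref{PSL2OtherExtsCase1}.
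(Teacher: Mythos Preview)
Your proposal is correct and follows essentially the same approach as the paper: compute $\Omega Y_i$ as the uniserial module $\rad\PC(Y_i)$ (which the paper phrases as $\PC(Y_j)/Y_j$), observe that $\Omega^2 Y_i \cong Y_j$, invoke the symmetry between $Y_1$ and $Y_2$ to get period $4$, and read off the heads. The paper's version is simply a more compressed rendering of exactly what you wrote.
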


\begin{proof}
	As before, we compute \(\Omega^n Y_i\). One sees immediately from the shape of the projective covers that \(\Omega Y_1 \sim \PC(Y_2)/Y_2\), and then \(\Omega^2 Y_1 \cong Y_2\). Since the choice of \(Y_1\) or \(Y_2\) was arbitrary, this determines the structure of all \(\Omega^n Y_i\) for both \(i\) and the result follows.
\end{proof}

The only remaining case is now made up of blocks containing a single non-projective simple module, \emph{i.e.} all modules not in \(B_0\) or \(B_1\) mentioned above (\(k\), \(V\), \(M_{a1}\), \(M_{a2}\), \(N_{b1}\) and \(N_{b2}\)) and not projective (so, not of dimension divisible by the order of a Sylow \(r\)-subgroup). This case is dealt with by \cref{LonelyModule}.

\begin{cor} \label{PSL2RemainingExts}
	Let \(Y\) be one of the following simple modules. 
	\begin{itemize}
		\item \(M_i\), \(i \nin \{1, a, a1, a2\}\), for odd \(r \mid q-1\)
		\item \(N_i\), \(i \nin \{b, b1, b2\}\), for odd \(r \mid q+1\)
		\item \(M_a\), for odd \(r \mid q-1\) and \(q \equiv 3 \mod 4\),
		\item \(N_b\), for odd \(r \mid q+1\) and \(q \equiv 1 \mod 4\).
	\end{itemize}
	Then \(Y\) is the only simple module in its block, is not projective, and \(\Ext_G^n(Y, Y) \cong k\) for all \(n\).
\end{cor}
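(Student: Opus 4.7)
The plan is to verify the hypotheses of \cref{LonelyModule} for each listed $Y$: namely, that $Y$ is non-projective and that $Y$ is the unique simple module in its block. Granted both, the conclusion $\Ext_G^n(Y,Y) \cong k$ for all $n$ is immediate.

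For non-projectivity, I would use the standard fact that a simple $kG$-module is projective if and only if its dimension is divisible by $\abs{\Syl_r G}$ (see \cite[Theorem 3.18]{NavarroCharactersBlocks}). When $r \mid q-1$ is odd, a Sylow $r$-subgroup has order dividing $q-1$; since $\gcd(q-1,q+1) \leq 2$ and $r$ is odd, no divisor of $q+1$ is divisible by $r$, so none of the $M_i$, which have dimension dividing $q+1$, are projective. The argument for $r \mid q+1$ and the $N_i$ is entirely symmetric.

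For the uniqueness claim, the Brauer trees of the blocks of $kG$ (for $r$ odd) are given in \cite{BurkhardtPSLDecomposition} and were recalled in the discussion preceding the corollary. Since the Sylow $r$-subgroups of $G$ are cyclic, every block of $kG$ has cyclic defect, and the number of non-projective simple modules in a block equals the number of edges of its Brauer tree. Burkhardt's classification shows that the only blocks whose Brauer tree has more than one edge are the principal block $B_0$ (containing $k$ and $V$) and, when it exists, the block $B_1$ (containing $\{M_{a1},M_{a2}\}$ when $r \mid q-1$ and $q \equiv 1 \bmod 4$, or $\{N_{b1},N_{b2}\}$ when $r \mid q+1$ and $q \equiv 3 \bmod 4$). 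Each $Y$ in the corollary is non-projective and by hypothesis lies outside $B_0$ and $B_1$, so its block is a single-edge block and $Y$ is the only simple there.

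The work is essentially bookkeeping; no Heller-translate computation is needed beyond \cref{LonelyModule} itself. The main care point is to match each exceptional case in the corollary, particularly the undivided $M_a$ when $q \equiv 3 \bmod 4$ and the undivided $N_b$ when $q \equiv 1 \bmod 4$, against Burkhardt's tables to confirm it indeed sits in a single-edge block.
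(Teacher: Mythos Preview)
Your proposal is correct and follows essentially the same approach as the paper: the paper simply observes that each such $Y$ lies outside $B_0$ and $B_1$, is non-projective by the dimension criterion, and then invokes \cref{LonelyModule}. Your write-up supplies a bit more detail (the $\gcd(q-1,q+1)\leq 2$ argument and the explicit appeal to Burkhardt's single-edge blocks), but the logical structure is identical.
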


Combining these results yields \cref{PSL2MinusOneSummary,PSL2PlusOneSummary}.

\subsection*{Case 2: \(r = 2\)} \label{PSL2Even}

Finally, we consider the case where \(\Char k = 2\). This is more difficult than the previous cases since the Sylow 2-subgroups of \(G\) are not cyclic.

The notation for this case is as set out on p.~\pageref{PSL2Irreducibles}. Here, \(\perm\) is indecomposable with radical factors \(k\), \(V \oplus W\), \(k\) and so again there are no nontrivial irreducible \(kG\)-modules with \(B\)--fixed points. When \(q \equiv 3 \mod 4\), \(\perm\) is projective (since \(B\) is a \(2'\)-group) and indecomposable with head \(k\) and is thus isomorphic to the projective cover \(\PC(k)\) of \(k\). The only irreducible modules lying in the principal block are \(k\), \(V\) and \(W\). The remaining irreducible modules do not behave too differently from the other cases, though the modules \(M_{a1}\), \(M_{a2}\), \(N_{b1}\) and \(N_{b2}\) do not exist in this case. Again, information on decomposition numbers may be found in Burkhardt's paper \cite[\nopp VIII]{BurkhardtPSLDecomposition}.

In this subsection, we prove the following two results.

\begin{thm} \label{PSLChar21mod4Summary}
	Let \(G \coloneqq \PSL_2(q)\) for \(q \equiv 1 \mod 4\) and suppose that \(k = \overline{\mathbb{F}_2}\). Then the modules \(N_i\) are projective for all \(i\), \(\H^n(G, M_i) = 0\) for all \(n\), \(i\) and
	\[\dim \H^n(G, k) = \begin{cases}
		\frac{n}{3} + 1 	&	n \equiv 0 \mod 3,\\
		\floor*{\frac{n}{3}}&	n \equiv 1 \mod 3,\\
		\ceil*{\frac{n}{3}}	&	n \equiv 2 \mod 3,
	\end{cases} \qquad
	\H^n(G,V) \cong \H^n(G,W) \cong \begin{cases}
		k & n \equiv 1 \mod 3, \\
		0 & \text{otherwise}.
	\end{cases}\]
	Further, \(\Ext_G^n(V, W) = \Ext_G^n(W, V) = 0\) for all \(n\) and \(\Ext_G^n(V, V) \cong \Ext_G^n(W, W)\) is zero for \(n \equiv 1 \mod 3\) and 1-dimensional otherwise. Finally, \(\Ext_G^n(M_i, M_i) \cong k\) for all \(n\).
\end{thm}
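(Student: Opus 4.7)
My approach is to split the irreducibles into those outside the principal $2$-block of $G$, which I handle using block-orthogonality and \cref{LonelyModule}, and those inside the principal block ($k$, $V$, $W$), which I analyse via the Brauer-graph description of \cite{PSL2BrauerGraphs}.

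First I would dispose of the modules outside the principal block. The Sylow $2$-subgroup of $G$ has order equal to the $2$-part of $q-1$, so each $N_i$ has dimension $q-1$ divisible by \(\abs{\Syl_2 G}\) and is therefore projective. The block decomposition in \cite[\nopp VIII]{BurkhardtPSLDecomposition} places $k$, $V$, $W$ in the principal block $B_0$ and each $M_i$ ($i > 1$) as the sole non-projective simple in its own block of cyclic defect, so \cref{LonelyModule} immediately delivers \(\Ext_G^n(M_i, M_i) \cong k\) for every $n$. For the vanishing of $\H^n(G, M_i)$ I would apply \cref{GeneralTheorem} with $H = B$: the hypothesis \(O_{2'}(B) = O^2(B)\) holds because both equal $Q$ extended by the $2'$-part of $T$, and a Mackey computation over the two Bruhat double cosets of $(B, B)$ in $G$ shows \(M_i^B = 0\). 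Then \(\H^n(G, M_i) \cong \Ext_G^{n-1}(M_i^*, \permk)\) vanishes by block orthogonality, since $\permk$ lies in $B_0$ while $M_i$ does not.

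For the principal block I would use the Brauer graph of $B_0$ from \cite{PSL2BrauerGraphs}, which admits a graph automorphism interchanging $V$ and $W$ (reflecting the symmetric way these appear as the two nontrivial constituents of \perm). This symmetry immediately yields \(\H^n(G, V) \cong \H^n(G, W)\) and \(\Ext_G^n(V, V) \cong \Ext_G^n(W, W)\) for all $n$. Reading \(\PC(V)\) off the Brauer graph, I would verify that \(\Omega V\) has head $k$, \(\Omega^2 V\) has head $V$, and \(\Omega^3 V \cong V\), so $V$ (and therefore $W$) is periodic of period $3$. The claimed mod-$3$ patterns for \(\H^n(G, V)\) and \(\Ext_G^n(V, V)\) then drop out of \cref{OmegaCohomology}; moreover, $W$ appears in none of \(\head V\), \(\head \Omega V\), or \(\head \Omega^2 V\), forcing \(\Ext_G^n(V, W) = 0\) for all $n$ and symmetrically for \(\Ext_G^n(W, V)\).

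The substantive task, and the main obstacle, is the cohomology of $k$. Unlike $V$ and $W$, the vertex corresponding to $k$ in the Brauer graph has higher degree, so $k$ appears with larger multiplicity in \(\PC(k)\) and $k$ is not periodic under $\Omega$. I would compute \(\Omega k\), \(\Omega^2 k\), and \(\Omega^3 k\) explicitly from the shape of \(\PC(k)\) and show that \(\Omega^3 k\) is a non-projective module built from a copy of $k$ together with a summand whose subsequent Heller translates mirror those of $k$ itself, establishing the recursion \(\dim \Hom(\Omega^{n+3} k, k) = \dim \Hom(\Omega^n k, k) + 1\) for all \(n \geq 0\). Combined with the base values \(\dim \H^0(G, k) = 1\), \(\dim \H^1(G, k) = 0\), \(\dim \H^2(G, k) = 1\), this recursion yields the claimed floor/ceiling formula. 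The delicate point will be tracking the non-projective direct summands of \(\Omega^n k\) through successive projective covers: because the heart of \(\PC(k)\) mixes all three simples, each step of $\Omega$ requires identifying which diagonal submodule of the socle of the lifting projective is actually picked out, and one must check that no unforeseen cancellation of summands disrupts the recursion.
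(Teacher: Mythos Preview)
Your treatment of the non-principal-block modules and of $V$, $W$ is essentially the paper's approach: projectivity of the $N_i$ by dimension, \cref{LonelyModule} for $\Ext_G^n(M_i,M_i)$, and the period-$3$ Heller analysis of $V$ via the Brauer graph to read off $\H^n(G,V)$, $\Ext_G^n(V,V)$ and $\Ext_G^n(V,W)$ from the heads of $\Omega^n V$. One small detour is unnecessary: for $\H^n(G,M_i)=0$ you invoke \cref{GeneralTheorem} and a Mackey computation, but $\H^n(G,M_i)=\Ext_G^n(k,M_i)$ already vanishes by block orthogonality since $k$ and $M_i$ lie in different blocks, so there is no need to pass through $\permk$ at all.

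The real divergence is $\H^n(G,k)$. The paper does not compute this: it quotes the Poincar\'e series of Fong--Milgram \cite[Theorem 8.1]{PSL2TrivialCohomology} and then checks that the coefficients satisfy the recursion $a_n=a_{n-3}+1$. Your proposal to derive this recursion by direct Heller-translate analysis is in principle workable (and is the strategy the paper \emph{does} carry out for $\Omega^n V$ in the harder $q\equiv 3\pmod 4$ case), but your sketch contains an imprecision that would need repair. You describe $\Omega^3 k$ as ``built from a copy of $k$ together with a summand'' with controlled behaviour; however, by \cref{HellerIndecomposable} every $\Omega^n k$ is indecomposable, so there is no nontrivial direct-sum decomposition to exploit. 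What one would actually need is a filtration or short exact sequence relating $\Omega^{n+3} k$ to $\Omega^n k$, and establishing that carefully from the shape of $\PC(k)$ in \eqref{1mod4Projectives} is exactly the kind of diagonal-submodule bookkeeping the paper carries out at length in \cref{3mod4structure} for the other congruence class. So your route is not wrong, but it is substantially more work than citing the known result, and the key structural step is currently only asserted, not argued.
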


\begin{thm} \label{PSLChar23mod4Summary}
	Let \(G \coloneqq \PSL_2(q)\) for \(q \equiv 3 \mod 4\) and suppose that \(k = \overline{\mathbb{F}_2}\). Then the modules \(M_i\) are projective for all \(i\), \(\H^n(G, N_i) = 0\) for all \(n\), \(i\) and
	\[\dim \H^n(G, k) = \begin{cases}
		\frac{n}{3} + 1 	&	n \equiv 0 \mod 3,\\
		\floor*{\frac{n}{3}}&	n \equiv 1 \mod 3,\\
		\ceil*{\frac{n}{3}}	&	n \equiv 2 \mod 3,
	\end{cases} \qquad \qquad
	\dim \H^n(G,V) = \dim \H^n(G,W) = \ceil*{\frac{n}{3}}. \]
	Further, \(\dim \Ext_G^n(V, W) = \dim \Ext_G^n(W, V) = \ceil*{\frac{n}{3}}\) for all \(n\) and
	\[
		\dim \Ext_G^n(W,W) = \dim \Ext_G^n(V,V) = \begin{cases}
			\frac{n}{3} + 1 	&	n \equiv 0 \mod 3,\\
			\floor*{\frac{n}{3}}&	n \equiv 1 \mod 3,\\
			\ceil*{\frac{n}{3}}	&	n \equiv 2 \mod 3.
		\end{cases}
	\]
	Finally, \(\Ext_G^n(N_i, N_i) \cong k\) for all \(n\).
\end{thm}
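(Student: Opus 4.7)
The proof plan is to separate the irreducibles into those inside and outside the principal block, and handle each case separately. For the non-principal blocks, the modules \(M_i\) have dimension \(q+1\); since \(\abs{G}_2 = (q+1)_2\) divides \(q+1\), each \(M_i\) lies in a block of defect zero and is therefore projective. The modules \(N_i\) have dimension \(q-1\) with 2-part equal to 2, so they lie in blocks of cyclic defect group of order 2. Combining the decomposition data of \cite{BurkhardtPSLDecomposition} with the fact that a 2-block with cyclic defect group of order 2 has a unique simple module, each such \(N_i\) is the only simple in its non-principal block. Then \cref{LonelyModule} gives \(\Ext_G^n(N_i, N_i) \cong k\) for all \(n\), while \(\H^n(G, N_i) = \Ext_G^n(k, N_i) = 0\) by the block decomposition of \(\Ext\).

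For the principal block \(B_0\), which has dihedral defect group of order \((q+1)_2\) and simples \(k\), \(V\), \(W\), I would invoke \cite{PSL2BrauerGraphs} for the Loewy structures of the PIMs: \(\PC(k) \sim [k \mid V \oplus W \mid k]\), while \(\PC(V)\) and \(\PC(W)\) are biserial with hearts consisting of two uniserial arms alternating through \(k\), \(V\), \(W\) along the Brauer graph, and Loewy length growing with \((q+1)_2\). The symmetry \(V \leftrightarrow W\) exchanging these two simples halves the subsequent work.

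I would then compute \(\Omega^n k\), \(\Omega^n V\), \(\Omega^n W\) iteratively via \(\Omega M = \ker(\PC(M) \twoheadrightarrow M)\). The first step gives \(\Omega k \sim [V \oplus W \mid k]\), and further iterations yield string modules lying further out in the Auslander--Reiten quiver. The key claim to establish is a period-3 growth law: for each simple \(X \in \{k, V, W\}\), the multiplicity of each simple \(Y \in \{k, V, W\}\) in \(\head \Omega^{n+3} X\) exceeds the multiplicity of \(Y\) in \(\head \Omega^n X\) by exactly one. Combined with \cref{OmegaCohomology} (giving \(\dim \Ext_G^n(X, Y) = [\head \Omega^n X : Y]\)) and direct verification of the base cases \(n \in \{0, 1, 2\}\), this will yield all of the stated formulas, including the observation \(\dim \Ext_G^n(V, V) = \dim \H^n(G, k)\).

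The main obstacle, in contrast to the cyclic defect analysis of the preceding subsection, is that the Heller translates here are unbounded string modules in the infinite Auslander--Reiten quiver of a tame block. The bulk of the technical work therefore lies in describing these modules inductively as walks on the Brauer graph of \cite{PSL2BrauerGraphs} and in verifying that one full ``cycle'' of length 3 on the graph adds precisely one composition factor of each type to \(\head \Omega^n X\). Once this pattern is set up, the remaining \(\Ext\)-computations (including \(\Ext_G^n(V, W)\), \(\Ext_G^n(W, V)\), and the comparisons with \(\H^n(G, k)\)) follow by tabulation together with the \(V \leftrightarrow W\) symmetry.
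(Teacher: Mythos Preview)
Your approach is essentially the same as the paper's. The non-principal block arguments match (projectivity of \(M_i\) by dimension, \(\H^n(G,N_i)=0\) by block orthogonality, \(\Ext_G^n(N_i,N_i)\cong k\) via \cref{LonelyModule}), and for the principal block you correctly identify that the work lies in an inductive description of the Heller translates \(\Omega^n V\) as string modules over the Brauer graph algebra, then reading off head multiplicities. The paper carries this out explicitly by introducing auxiliary modules \(E_n\), \(F_n\) and showing that \(\Omega^{n}V\) is a fixed extension of these by a single simple; your ``period-3 growth law'' is exactly the pattern that falls out of that analysis.

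Two small points where the paper differs. First, for \(\H^n(G,k)\) the paper does \emph{not} compute \(\Omega^n k\); it simply quotes the Poincar\'e series of Fong--Milgram \cite{PSL2TrivialCohomology} and checks that it matches the stated formula. Your plan to compute \(\Omega^n k\) directly would work and would in fact be parallel to the \(\Omega^n V\) analysis, but it is extra labour the paper avoids. Second, your claim that the blocks containing \(N_i\) have defect group of order exactly \(2\) is not argued correctly: the \(2\)-part of \(\dim N_i\) does not by itself pin down the defect. The paper instead observes that any non-principal \(2\)-block has defect group a Sylow \(2\)-subgroup of the centraliser of a \(2\)-regular element, hence cyclic (since the Sylow \(2\)-subgroups of \(G\) are dihedral). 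That is enough to apply \cref{LonelyModule}, so your conclusion stands even though the justification needs adjusting.
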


We begin with the easy cases.

\begin{propn} \label{Char2StuffIsZero}
	If \(Y\) is one of the modules \(M_i\), \(N_j\) then \(\H^n(G,Y) = 0\) for all \(n\).
\end{propn}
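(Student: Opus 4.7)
The plan is to use block orthogonality, which reduces the proposition to the observation that $Y$ lies in a non-principal block of $kG$. As stated in the setup for this case, the principal block $B_0(kG)$ contains only the three irreducibles $k$, $V$, and $W$, so every module of the form $M_i$ or $N_j$ lies in a block distinct from $B_0$. Since $kG$ decomposes as a direct sum of two-sided ideals indexed by its blocks, $\Ext_G^n$ between modules in distinct blocks vanishes; hence with $k \in B_0$ and $Y$ in a different block one obtains
\[\H^n(G, Y) = \Ext_G^n(k, Y) = 0\]
for every $n \geq 0$.

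If one preferred to route the proof through the main machinery of the paper, I would apply \cref{GeneralTheorem} with $H = B$. The hypothesis $O_{2'}(B) = O^2(B)$ holds trivially when $q \equiv 3 \mod 4$ (where $B$ is a $2'$-group), and when $q \equiv 1 \mod 4$ both subgroups coincide with $Q \rtimes T_{2'}$ after writing $T = T_2 \times T_{2'}$. The head of $\perm$ is $k$ in both cases—either from the radical series $\perm \sim [k \mid V \oplus W \mid k]$ or from $\perm \cong \PC(k)$—so Frobenius reciprocity gives $Y^B \cong \Hom_G(\perm, Y) = 0$ for $Y \ncong k$. Then \cref{GeneralTheorem} yields $\H^n(G, Y) \cong \Ext_G^{n-1}(Y^*, \permk)$ for $n \geq 1$, and the argument closes by noting that all composition factors of $\permk$ lie in $B_0$ whereas $Y^*$ does not.

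The main obstacle is essentially nil: the crucial input is the identification of the principal block's irreducible constituents, which is already laid out in the preceding setup, and the rest is formal. The only mild subtlety is confirming duality preserves block membership (so that $Y^*$ is non-principal precisely when $Y$ is), but this is standard since duality is induced by an anti-involution of $kG$ fixing each block idempotent.
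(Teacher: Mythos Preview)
Your primary argument is exactly the paper's proof: the modules \(M_i\), \(N_j\) lie outside the principal block (as recorded in the setup and in the references the paper cites), so \(\H^n(G,Y) = \Ext_G^n(k,Y) = 0\) by block orthogonality. The alternative route through \cref{GeneralTheorem} is correct but superfluous here, since it ultimately still appeals to the same block separation to kill \(\Ext_G^{n-1}(Y^*,\permk)\).
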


\begin{proof}
	Referring to \cite[91]{BurkhardtPSLDecomposition} or \cite[Theorem 7.1.1]{SL2Representations}, we see that the above modules all lie outside the principal block. Thus \(\Ext_G^n(k, Y) = \H^n(G,Y) = 0\) for all \(n\) by \cite[\textsection 13, Proposition 3]{AlperinLocal}.
\end{proof}

\begin{propn} \label{Char2Projectives}
	When \(q \equiv 1 \mod 4\), \(N_i\) is projective for all \(i\). When \(q \equiv 3 \mod 4\), \(M_i\) is projective for all \(i\).
\end{propn}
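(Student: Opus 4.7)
The plan is to invoke the classical criterion that an irreducible $kG$-module lies in a block of defect zero — equivalently, is projective — exactly when its dimension is divisible by $\abs{G}_r$, the $r$-part of $\abs{G}$ (see e.g.\ \cite[Theorem 3.18]{NavarroCharactersBlocks} which the author already cites in the odd-characteristic discussion). So the proof reduces to a dimension count.

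First I would compute $\abs{G}_2$. Since $q$ is odd, $\abs{G} = \frac{1}{2}q(q-1)(q+1)$, and exactly one of $q-1$, $q+1$ is $\equiv 2 \pmod 4$ while the other is divisible by $4$. A short check gives
\[\abs{G}_2 = \begin{cases} (q-1)_2 & \text{if } q \equiv 1 \pmod 4, \\ (q+1)_2 & \text{if } q \equiv 3 \pmod 4. \end{cases}\]
Next I would note from the setup on p.~\pageref{PSL2Irreducibles} that in characteristic $2$ the split modules $M_{a1}, M_{a2}, N_{b1}, N_{b2}$ do not exist, so every $M_i$ has dimension $q+1$ and every $N_i$ has dimension $q-1$.

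Combining these observations, when $q \equiv 1 \pmod 4$ the dimension $q-1$ of each $N_i$ is divisible by $(q-1)_2 = \abs{G}_2$, so $N_i$ has defect zero and is projective. Symmetrically, when $q \equiv 3 \pmod 4$ the dimension $q+1$ of each $M_i$ is divisible by $(q+1)_2 = \abs{G}_2$, so $M_i$ is projective. There is no serious obstacle here; the only thing one needs to be careful about is that the split cases are genuinely absent in characteristic $2$, which has already been recorded in the preceding paragraph of the paper and is the reason one can uniformly quote dimensions $q+1$ and $q-1$.
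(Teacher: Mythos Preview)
Your proof is correct and follows exactly the same approach as the paper, which simply states that the result follows because the dimensions are divisible by the order of a Sylow \(2\)-subgroup of \(G\). You have merely filled in the arithmetic details of this one-line proof.
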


\begin{proof}
	This follows from the fact that their dimensions are divisible by the order of a Sylow \(2\)-subgroup of \(G\).
\end{proof}

\begin{propn} \label{Char2LonelyModules}
	When \(q \equiv 1 \mod 4\), \(\Ext_G^n(M_i, M_i) \cong k\) for all \(n\) and when \(q \equiv 3 \mod 4\), \(\Ext_G^n(N_i, N_i) \cong k\) for all \(n\).
\end{propn}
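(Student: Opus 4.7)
The plan is to apply \cref{LonelyModule} to each non-projective \(M_i\) (when \(q \equiv 1 \mod 4\)) or \(N_i\) (when \(q \equiv 3 \mod 4\)), which will immediately yield \(\Ext_G^n(Y, Y) \cong k\) provided we verify the two hypotheses: (a) each such module is the unique non-projective simple module in its 2-block, and (b) that 2-block has cyclic defect group.

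For (a), we have already observed in \cref{Char2StuffIsZero} that all \(M_i\) and \(N_j\) lie outside the principal 2-block. Consulting the 2-modular block distribution for \(\PSL_2(q)\) in Burkhardt's \cite[\nopp VIII]{BurkhardtPSLDecomposition}, each ordinary character \(\chi_i\) (which reduces modulo 2 to \(M_i\) when \(q \equiv 1 \mod 4\)) or \(\theta_j\) (which reduces modulo 2 to \(N_j\) when \(q \equiv 3 \mod 4\)) that does not correspond to a projective module is the only 2-modular irreducible in its block. Thus each such non-projective \(M_i\) or \(N_j\) is alone in its block.

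For (b), the defect group of such a block is a proper 2-subgroup of a (dihedral) Sylow 2-subgroup of \(G\), since the block is non-principal and the module is non-projective. Via the Brauer correspondence, this defect group can be realised as a subgroup of the centraliser in \(G\) of a suitable 2-regular element of the cyclic torus \(T\) (respectively \(S\)) from whose character \(M_i\) (respectively \(N_j\)) is induced; as \(T\) and \(S\) are cyclic, so is the defect group. With both hypotheses verified, \cref{LonelyModule} gives the claim.

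The main obstacle is the identification of the block distribution and verification that these non-principal 2-blocks have cyclic defect rather than, for example, Klein-four defect sitting inside the dihedral Sylow; both facts are resolved by quoting Burkhardt, after which the proposition is an immediate application of \cref{LonelyModule}.
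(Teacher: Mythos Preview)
Your proposal is correct and follows essentially the same route as the paper: verify that each \(M_i\) (resp.\ \(N_j\)) lies alone in a non-principal 2-block of cyclic defect, then invoke \cref{LonelyModule}. The only cosmetic difference is that the paper obtains cyclicity of the defect group directly from the characterisation of defect groups as Sylow 2-subgroups of centralisers of 2-regular elements \cite[278]{IsaacsCharacterTheory} (any such centraliser in \(\PSL_2(q)\) not containing a full Sylow is cyclic), whereas you phrase this via the Brauer correspondence and lean on Burkhardt for the block distribution.
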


\begin{proof}
	This follows from \cref{LonelyModule}. To see that we may apply this, note that the Sylow \(2\)-subgroups of \(G\) are dihedral and the principal block is the only 2-block of \(G\) of maximal defect and otherwise the defect group of any block \(B\) is a Sylow 2-subgroup of the centraliser of some 2-regular element by an equivalent definition of defect group \cite[278]{IsaacsCharacterTheory}. In particular, such a group must be cyclic if it is not the whole Sylow 2-subgroup of \(G\).
\end{proof}

We also know the cohomology of \(k\) due to Fong and Milgram:

\begin{propncite}[{\cite[Theorem 8.1]{PSL2TrivialCohomology}}]
	Let \(k\) be the trivial \(kG\)-module. Then \(\dim \H^n(G,k)\) is given by the coefficient of \(x^n\) in the Poincar\'{e} series
	\[P(x) = \frac{1+x^3}{(1-x^2)(1-x^3)} = 1 + x^2 + 2x^3 + x^4 + \ldots.\]
\end{propncite}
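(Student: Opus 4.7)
The plan is to invoke the Cartan--Eilenberg stable element theorem, which identifies $\H^*(G, k)$ with the ring of $G$-stable elements inside $\H^*(P, k)$, where $P$ is a Sylow $2$-subgroup of $G$. For $G = \PSL_2(q)$ with $q$ odd, $P$ is dihedral of order $(q^2-1)_2/2 \geq 4$, and its mod $2$ cohomology has Hilbert series $\frac{1}{(1-t)^2}$ in either case (Klein four or dihedral of order $\geq 8$). By Quillen's stratification theorem, the cohomology is detected on elementary abelian $2$-subgroups; here the maximal ones are Klein four subgroups $V \leq P$.

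The next step is to identify the relevant fusion on such a $V$. Since $\PSL_2(q)$ always contains a copy of $A_4$, the normaliser $N_G(V)$ surjects onto $A_4$ (or $S_4$) and its order-$3$ element cyclically permutes the three involutions of $V$. Consequently, the image of the restriction $\H^*(G, k) \to \H^*(V, k) = k[s, t]$ lies in the invariant subring $k[s,t]^{\mathbb{Z}/3}$.

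A direct Molien computation, lifting the order-$3$ action to eigenvalues $\omega, \omega^2 \in \overline{\mathbb{F}_2}$, yields
\[\frac{1}{3}\left(\frac{1}{(1-t)^2} + \frac{2(1-t)}{1-t^3}\right) = \frac{1-t+t^2}{(1-t)(1-t^3)} = \frac{1+t^3}{(1-t^2)(1-t^3)},\]
exactly the claimed Poincar\'e series. It then remains to verify that restriction to $V$ surjects onto this invariant subring---by exhibiting $G$-stable lifts of the Dickson-type generators $s^2 + st + t^2$ in degree $2$ and $s^2 t + s t^2$ in degree $3$---and that no $G$-stable elements of $\H^*(P, k) \cong k[x, y, w]/(xy)$ outside this image survive.

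The main obstacle is this final stability analysis: one must control the classes in $\H^*(P, k)$ vanishing on restriction to $V$, showing they fail to be $G$-stable. This requires the Cartan--Eilenberg double-coset formula together with a careful description of the $G$-fusion of the non-central involutions of $P$ under conjugation by the Weyl element. Fong and Milgram handle this by a case analysis on the $2$-adic valuation of $q \pm 1$; a cleaner alternative is to observe that $\PSL_2(q)$ and $A_4$ have equivalent $2$-local fusion systems on $V$, whence $\H^*(G, k) \cong \H^*(A_4, k)$, and the Poincar\'e series of the latter is precisely the Molien series computed above.
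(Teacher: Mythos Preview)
The paper does not prove this statement; it is quoted as a result of Fong and Milgram (hence the \texttt{propncite} environment) and used as a black box. The only argument the paper supplies is the short paragraph afterwards, which merely checks that the power-series coefficients of \(P(x)\) agree with the closed forms appearing in \cref{PSLChar21mod4Summary,PSLChar23mod4Summary}. There is thus no proof in the paper to compare your proposal against.

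As a standalone sketch of the Fong--Milgram computation your outline is broadly correct: the stable-elements description, detection on a Klein four subgroup \(V\), the \(C_3\)-action coming from \(A_4 \leq G\), and the Molien computation all check out and do produce the Hilbert series of \(k[s,t]^{C_3}\). The real gap is precisely the step you yourself flag as ``the main obstacle'': proving that restriction \(\H^*(G,k) \to k[s,t]^{C_3}\) is an isomorphism, not just a containment. Your proposed ``cleaner alternative''---that \(\PSL_2(q)\) and \(A_4\) have equivalent \(2\)-local fusion systems on \(V\)---does not close this gap as written. A fusion system lives on the full Sylow subgroup, and the statement is only literally true when that Sylow is already Klein four, i.e.\ when \(q \equiv \pm 3 \pmod 8\). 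For larger dihedral Sylow you need either the classification of saturated fusion systems on dihedral \(2\)-groups (together with the fact that the ``\(\PSL_2\)-type'' pattern always yields the cohomology of \(A_4\)), or else the explicit double-coset stability analysis---and either of these is essentially the work you were hoping to bypass.
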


To see that the above does actually yield the statements from \cref{PSLChar21mod4Summary,PSLChar23mod4Summary}, note that using Maclaurin series,
\[(1+x^3) \frac{1}{1-x^2} \frac{1}{1-x^3} = (1+x^3) (1 + x^2 + x^4 + x^6 + \cdots) (1 + x^3 + x^6 + x^9 + \cdots)\]
and the coefficient \(a_n\) of \(x^n\) in such an expansion is the number of ways \(n\) may be written as \(2a + 3b\), for integers \(a\), \(b \geq 0\), plus the number of ways \(n-3\) may be written in the same form.
One may then check that this gives \(a_n = a_{n-3} + 1\) for \(n \geq 3\) with \(a_n = 0\) for \(n < 0\). It is therefore sufficient to check the first three terms of the series, as given above.

For the cohomology of the remaining two modules, however, we will need to bring in some extra machinery. We will need to make use of Brauer graphs and knowledge of the structure of the projective covers of modules for blocks with Brauer graphs.

An introduction to Brauer graphs may be found in \cite{PSL2BrauerGraphs} and \cite[\nopp 4.18]{Benson1}, so the unfamiliar reader should refer to these during what follows. We give below the Brauer graphs for \(\PSL_2(q)\) in characteristic 2, which differ dependent on whether \(q \equiv \pm 1 \mod 4\). These may be found in \cite[\textsection2, (4) \& (5)]{PSL2BrauerGraphs}, but the multiplicities of the vertices are not determined there. Thus before we give the Brauer graphs, we must determine the multiplicities at each vertex. We shall denote by \(m\) the multiplicity of the vertex at which both \(V\) and \(W\) are incident and determine the remaining multiplicities in the following lemmas. The case where \(q \equiv 3 \mod 4\) is easy:

\begin{lemma}
	Let \(q \equiv 3 \mod 4\). Then the multiplicities of the two vertices at which \(k\) is incident are both 1.
\end{lemma}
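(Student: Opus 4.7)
I would deduce both endpoint multiplicities simultaneously from a single Cartan number of \(\PC(k)\), via the standard description of projective covers in a Brauer graph algebra.

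The input data is easy to assemble. Since \(q \equiv 3 \mod 4\), the Borel subgroup \(B\) has odd order \(q(q-1)/2\), so \(\perm = \Ind_B^G k\) is projective; combining this with the indecomposability of \(\perm\) and its radical series \([k \mid V \oplus W \mid k]\) (both recorded at the start of the subsection) shows \(\perm \cong \PC(k)\). In particular \(k\) occurs with multiplicity exactly \(2\) as a composition factor of \(\PC(k)\), so the Cartan number satisfies \(C_{kk} = 2\).

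The second step is to invoke the standard Brauer graph formula: if \(e\) is a non-loop edge with endpoints \(u, v\) of multiplicities \(m_u, m_v\), then \(C_{ee} = m_u + m_v\) (this can be verified against cases 1 and 2 of the previous subsection, where the stated Cartan matrices have exactly this shape). Applied to \(e = k\) this yields \(m_u + m_v = 2\), and together with the positivity of Brauer graph multiplicities this forces \(m_u = m_v = 1\).

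The one point of care, and the part I expect to take most of the writeup, is confirming that \(k\) is not a loop; otherwise the Cartan formula has a different shape (essentially \(2 m_v\)) and the lemma's phrasing ``both \(1\)'' becomes ambiguous. This is straightforward: if \(k\) were a loop at a single vertex \(v\), then \(\PC(k)\) would be built from a single cyclic walk around \(v\) and its heart would be uniserial, which is incompatible with the observed semisimple second radical layer \(V \oplus W\) of \(\PC(k)\). Alternatively one can just cite the explicit Brauer graph in \cite{PSL2BrauerGraphs}. Either way, once we know \(k\) has two distinct endpoints the Cartan argument above applies and gives \(m_u = m_v = 1\).
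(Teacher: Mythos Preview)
Your proposal is correct and follows essentially the same approach as the paper: both arguments hinge on the observation that \(B\) has odd order when \(q \equiv 3 \mod 4\), so \(\perm = \PC(k)\), and then the known structure \([k \mid V \oplus W \mid k]\) of \(\perm\) forces the multiplicities at both endpoints of \(k\) to be \(1\). The paper states this in one line without unpacking the deduction, whereas you make it explicit via the Cartan number \(C_{kk} = m_u + m_v = 2\); this is simply a more detailed presentation of the same inference.
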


\begin{proof}
	This follows immediately from the fact that \(\perm\) is projective, since \(B\) is a \(2^{\prime}\)-group.
\end{proof}

\begin{lemma} \label{1mod4Multiplicities}
	Let \(q \equiv 1 \mod 4\). Then the multiplicities of the two vertices at which \(k\) is \emph{not} incident are both 1.
\end{lemma}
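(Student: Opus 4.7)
The plan is to determine the two unknown multiplicities $m_{a_V}$ and $m_{a_W}$ (at the pendant vertices of the edges $V$ and $W$) by combining Burkhardt's decomposition matrix with the standard Cartan-entry formula for Brauer graph algebras, via a double-counting argument on the projective cover of $V$ (and of $W$).

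First I would read off the decomposition numbers in the principal $2$-block from \cite{BurkhardtPSLDecomposition}. Writing $|D| = (q-1)_2 = 2^n$, these are $1 = k$, $\xi_1 = k + V$, $\xi_2 = k + W$, $\varphi = k + V + W$, and $\chi_i = 2k + V + W$ for each of the $\chi_i$ in the block (those indexed by torus characters of $2$-power order). Forming $C = D^\top D$ then yields a Cartan matrix whose entries are explicit functions of $n$; in particular the entry $[P(V) : V]$ can be computed directly.

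Next I would invoke the shape of the Brauer graph from \cite{PSL2BrauerGraphs} and apply the standard Brauer-graph formula to the edge $V$: if the two endpoints of $V$ are the common vertex (multiplicity $m$, by the paper's convention) and the pendant vertex $a_V$, then $[P(V) : V] = m + m_{a_V}$, with off-diagonal entries given by sums of multiplicity contributions from walks around the two endpoints. Once $m$ is pinned down---either directly from the diagonal entry $[P(k) : k]$, or in a companion lemma counting the exceptional characters attached to the common vertex---the equation forces $m_{a_V} = 1$. By the $V \leftrightarrow W$ symmetry (afforded by the Galois action or outer automorphism of $G$ swapping these two simple modules), the identical argument yields $m_{a_W} = 1$.

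The main obstacle is working precisely with the tame Brauer graph: unlike the Brauer trees of the cyclic-defect cases treated earlier in the section, the graph here is not a tree and may carry loops or multi-edges at the common vertex of $V$ and $W$, which modify the walk-around-vertex formulas that give the Cartan entries. Once the exact shape from \cite{PSL2BrauerGraphs} is invoked and these combinatorial walk formulas are correctly set up, the calculation matching the explicit Cartan entry against $m + m_{a_V}$ is routine.
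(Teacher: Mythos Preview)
Your approach is correct but takes a different route from the paper's. The paper simply cites the Ext quiver of the principal block from Erdmann \cite[295]{ErdmannBlocks}, which has the shape $V \leftrightarrows k \leftrightarrows W$ with no loops; in particular $\Ext_G^1(V,V) = \Ext_G^1(W,W) = 0$. Since a pendant vertex of multiplicity $m_{a_V} > 1$ would force the heart of $\PC(V)$ to contain a uniserial piece $[V \mid V \mid \cdots \mid V]$ of length $m_{a_V} - 1$ (walking around that vertex), and hence $\Ext_G^1(V,V) \neq 0$, this immediately gives $m_{a_V} = 1$, and symmetrically for $W$.

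Your Cartan-matrix argument also works: with the loop $k$ at the common vertex, the walk formulas give $[P(V):V] = m + m_{a_V}$ and $[P(k):k] = 4m$, and matching these against the entries of $D^{\top}D$ from Burkhardt's decomposition numbers forces $m_{a_V} = 1$. The trade-off is that your method requires you to pin down $m$ explicitly (which the paper deliberately avoids, noting only that $m \neq 1$ is all that is needed later), and you must handle the loop edge $k$ carefully when setting up the walk formulas, as you correctly flag. The paper's argument sidesteps both issues by importing $\Ext^1(V,V) = 0$ directly, at the cost of an additional external reference.
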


\begin{proof}
	From \cite[295]{ErdmannBlocks}, the Ext quiver of \(G\) in this case is below.
	\begin{center}
		\begin{tikzcd}[every arrow/.append style = {shift left}]
			V \arrow[r]		& \arrow[l] k \arrow[r] 	&	W \arrow[l]
		\end{tikzcd}
	\end{center}
	In the above quiver we see that no module is connected to itself. This means that \(\Ext_G^1(Y, Y) = 0\) for all irreducible \(Y\) in the principal block. In particular, \(\Ext_G^1(V, V) = \Ext_G^1(W, W) = 0\).
\end{proof}

Knowing the above facts, and the shape of the Brauer graphs for the principal block of \(\PSL_2(q)\) from \cite{PSL2BrauerGraphs}, we now give the Brauer graphs for \(G\) below. The multiplicity \(m\) at the labelled vertex is in each case a function of \(q\), however, note that our subsequent calculations only make use of the fact that \(m \neq 1\) and the conditions for this are given when required.

\begin{center}
	\begin{tikzpicture}

		\filldraw[color=black] (-7,0) circle(0.1);
		\filldraw[color=black] (-3,0) circle(0.1);
		\filldraw[color=black] (-5,0) circle(0.1) node[below=5pt,right=0pt] {\(m\)};

		\draw (-6.9,0) -- (-5.1,0) node[pos=0.5,above] {\(V\)};
		\draw (-4.9,0) -- (-3.1,0) node[pos=0.5,above] {\(W\)};
		\draw (-5,0) arc(0:356:1.5) node[pos=0.5,left] {\(k\)};

		\draw (-5.5,-2) node {\(q \equiv 1 \mod 4\)};

		\filldraw[color=black] (1,-1) circle(0.1);
		\filldraw[color=black] (3,-1) circle(0.1);
		\filldraw[color=black] (2,1) circle(0.1) node[above=2pt] {\(m\)};

		\draw (1,-1) -- node[pos=0.5,below] {\(k\)} (3,-1) -- node[pos=0.5,right] {\(W\)} (2,1) -- node[pos=0.5,left] {\(V\)} (1,-1);

		\draw (2,-2) node[align=center] {\(q \equiv 3 \mod 4\)};

	\end{tikzpicture}
\end{center}

Using this, we can see that the projective indecomposable modules for \(q \equiv 1 \mod 4\) are as below. Here we are using a bold letter to denote the first module in a new cycle (after the first) around a vertex. If a vertex has multiplicity \(m\), we take \(m\) anti-clockwise cycles around this vertex, starting from the edge labelled by the module whose projective cover we are interested in, and thus add \(m-1\) bold letters to our depiction of the corresponding projective cover.

\[ \label{1mod4Projectives}
	\begin{array}{ccc}
		V		\\
		k 		\\
		W 		\\
		k 		\\
		\bm{V}	\\
		\vdots 	\\
	 	W 		\\
		k 		\\
		V 				
	\end{array}
	\qquad \qquad \qquad
	\begin{array}{ccc}
		W			\\
		k 			\\
		V 			\\
		k 			\\
		\bm{W}		\\
		\vdots	 	\\
		V 			\\
		k 			\\
		W 			
	\end{array}
	\qquad \qquad \qquad
	\begin{array}{ccc}
				&	k 			& 			\\
		V 		& 				& 	W 		\\
		k 		& 				&	k 		\\
		W 		& 				&	V 		\\
		\bm{k}	& 	\bigoplus	& 	\bm{k}	\\
		\vdots	&				& 	\vdots 	\\
		k 		& 				& 	k 		\\
		W 		& 				& 	V 		\\
				&	k 			&			
	\end{array}
	\tag{\(\ddagger\)}
\]

and for \(q \equiv 3 \mod 4\) we get the following.

\[	\label{3mod4Projectives}
	\begin{array}{ccc}
			&	V			&			\\
			&				&	W 		\\
			&				&	\bm{V}	\\
			&				&	W		\\
		k	&	\bigoplus	&	\bm{V}	\\
			&				& 	\vdots 	\\
			& 				&	\bm{V}	\\
			&				&	W 		\\
			&	V 			&
	\end{array}
	\qquad \qquad \qquad
	\begin{array}{ccc}
				&	W			&			\\
		V		&				&	 		\\
		\bm{W}	&				&			\\
		V		&				&			\\
		\bm{W}	&	\bigoplus	&	k		\\
		\vdots	&				& 	 		\\
		\bm{W}	& 				&			\\
		V		&				&	 		\\
				&	W 			&
	\end{array}
	\qquad \qquad \qquad
	\begin{array}{ccc}
		&	k 	& 	\\
	V 	& 	\bigoplus 	& W \\
		& 	k 	&
	\end{array}
	\tag{\(\blacklozenge\)}
\]

We are now ready to determine the cohomology for \(G\).

\begin{propn} \label{1mod4}
	Suppose \(r = 2\) and \(q \equiv 1 \mod 4\). Then 
	\[\H^n(G,V) \cong \H^n(G,W) \cong \begin{cases}
		k & n \equiv 1 \mod 3, \\
		0 & \text{otherwise}.
	\end{cases}\]
\end{propn}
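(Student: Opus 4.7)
The plan is to compute the Heller translates $\Omega^i V$ for $i = 1, 2, 3$ directly from the projective covers displayed in \eqref{1mod4Projectives}, establish the periodicity $\Omega^3 V \cong V$, and apply \cref{OmegaCohomology} to read off the cohomology. Once $V$ is shown periodic of period $3$, the stated pattern follows from the initial values $\H^0(G, V) = V^G = 0$ (as $V$ is a nontrivial simple), $\H^1(G, V) \cong k$, and $\H^2(G, V) = 0$. The analogous statements for $W$ follow by the same argument because \eqref{1mod4Projectives} is visibly symmetric under the swap $V \leftrightarrow W$; equivalently, $\Omega^n W$ has the structure of $\Omega^n V$ with $V$ and $W$ interchanged.

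From \eqref{1mod4Projectives}, $\PC(V)$ is uniserial with the $4m + 1$ radical layers $V, k, W, k, V, k, W, k, \ldots, V$. Thus $\Omega V = \rad \PC(V)$ is uniserial of composition length $4m$ with head $k$ and second layer $W$, and so $\H^1(G, V) \cong \Hom_G(\Omega V, k) \cong k$.

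For $\Omega^2 V$, consider the surjection $\phi \colon \PC(k) \twoheadrightarrow \Omega V$. The display of $\PC(k)$ exhibits the standard biserial structure of an indecomposable projective for a Brauer graph algebra: $\PC(k)$ has head and socle both $k$, and $\rad \PC(k) = L + R$, where $L$ and $R$ are uniserial submodules of composition length $4m$ with heads $V$ and $W$ respectively and common socle $\soc \PC(k)$. Since the second radical layer of $\Omega V$ is $W$, the induced map on $\rad \PC(k) / \rad^2 \PC(k) \cong V \oplus W$ annihilates the $V$-summand, i.e.\ the head of $L$. As $L$ is generated by its head, $L \subseteq \ker \phi$; a composition-length count (both $L$ and $\Omega^2 V$ have composition length $4m$) then forces $\Omega^2 V = L$. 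In particular $\head \Omega^2 V = V \not\cong k$, so $\H^2(G, V) = 0$.

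For $\Omega^3 V$, the projective-cover surjection $\PC(V) \twoheadrightarrow L = \Omega^2 V$ is a map between uniserial modules of composition lengths $4m + 1$ and $4m$ whose radical layers in positions $1$ through $4m$ agree; hence its kernel is precisely $\soc \PC(V) \cong V$, giving $\Omega^3 V \cong V$. This establishes periodicity of period $3$ for $V$ (and, by the Brauer graph symmetry, for $W$), completing the proof. The principal delicacy is the identification of $\Omega^2 V$ inside $\PC(k)$: one must justify the biserial description of $\rad \PC(k)$ and then single out the correct uniserial submodule as $\ker \phi$ by matching the head of $\rad \Omega V$ to the head of $R$ rather than that of $L$.
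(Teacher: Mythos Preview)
Your approach coincides with the paper's: compute $\Omega^i V$ for $i\le 3$ from the uniserial structure of $\PC(V)$ and the biserial structure of $\PC(k)$, establish period~$3$, and read off the cohomology via \cref{OmegaCohomology}. The paper's own proof is extremely terse (the analysis is ``left as an exercise to the reader''), so you are supplying precisely the details the paper omits, and your treatment of $\Omega V$ and of the passage $\Omega^2 V \mapsto \Omega^3 V \cong V$ by a length count in the uniserial module $\PC(V)$ is correct.

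There is, however, a slip in the identification of $\Omega^2 V$. From the fact that the induced map $\rad\PC(k)/\rad^2\PC(k)\to\rad\Omega V/\rad^2\Omega V$ kills the $V$-summand you may \emph{not} conclude $L\subseteq\ker\phi$: all you have shown is that a generator $x$ of $L$ satisfies $\phi(x)\in\rad^2\Omega V$, not $\phi(x)=0$, and in fact $\Hom_G(\overline{L},\rad\Omega V)$ need not vanish here, so for a general choice of projective-cover map $\phi$ the kernel can be a ``diagonal'' copy of $L$ rather than $L$ itself. A clean repair is to observe first that $\soc\PC(k)\cong k$ while $\soc\Omega V\cong V$, so $\soc\PC(k)\subseteq\ker\phi$ and $\phi$ factors through $\PC(k)/\soc\PC(k)$; on $\rad\PC(k)/\soc\PC(k)=\overline{L}\oplus\overline{R}$ the component $\overline{R}\to\rad\Omega V$ is then surjective on heads between uniserial modules of equal length $4m-1$, hence an isomorphism, and the kernel of the map on $\overline{L}\oplus\overline{R}$ is the graph of some homomorphism $\overline{L}\to\overline{R}$, so is isomorphic to $\overline{L}$. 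Thus $\Omega^2 V$ is uniserial of length $4m$ with head $V$ and socle $k$, i.e.\ $\Omega^2 V\cong L$ as modules, which is all that is needed for the rest of your argument.
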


\begin{proof}
	This can be seen from the structure of \(\Omega^n V\) for \(n \leq 3\). To see this, note that we may find a projective resolution
	\[\cdots \xrightarrow{d_2} \PC(k) \xrightarrow{d_1} \PC(V) \xrightarrow{d_3} \PC(V) \xrightarrow{d_2} \PC(k) \xrightarrow{d_1} \PC(V) \surj V\]
	and analysis of this resolution shows that \(\head \Omega V \cong k\) and \(\head \Omega^2 V \cong \head \Omega^3 V \cong V\) and that this resolution (and thus \(V\)) is periodic of period 3. The analysis in this case is left as an exercise to the reader.
\end{proof}

For the next case, we again use the structure of \(\Omega^n V\) as above, but since the projective module we are investigating is not uniserial (and indeed none of \(k\), \(V\) or \(W\) are even periodic!) things are more complicated. We determine the structure of \(\Omega^n V\) by carefully following the diagonal extensions present at each step and determining which extensions must therefore be present in its Heller translate. We recommend that the reader draws the associated pictures for the below arguments (and those which follow) and keeps them in mind at all times as they should greatly aid understanding. We will present some such pictures in the below proof, but in future they will be omitted in the name of brevity.

\begin{propn} \label{3mod4}
	Suppose \(r = 2\) and \(q \equiv 3 \mod 4\) and let \(V\), \(W\) be as above. Then 
	\[\dim \H^n(G,V) = \dim \H^n(G,W) = \ceil*{\frac{n}{3}}. \]
\end{propn}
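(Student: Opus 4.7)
My plan is to compute the Heller translates $\Omega^n V$ iteratively using the projective indecomposables from \eqref{3mod4Projectives}, then use \cref{OmegaCohomology} to read off $\dim \H^n(G, V) = \dim \Hom_G(\Omega^n V, k)$ as the multiplicity of $k$ in $\head \Omega^n V$ (recalling that $V$ is self-dual here, so this equals $\dim \Hom_G(\Omega^n V^*, k)$). The approach mirrors the proof of \cref{1mod4}, but since the cohomology grows linearly in $n$, so must the composition lengths of the $\Omega^n V$; there is no periodicity to exploit.

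By \eqref{3mod4Projectives}, $\PC(V) \sim [V \mid k \oplus Z \mid V]$ where $Z$ is uniserial with $\head Z \cong W$, so $\Omega V = \rad \PC(V)$ has head $k \oplus W$ and socle $V$. This immediately gives $\dim \H^1(G, V) = 1 = \lceil 1/3 \rceil$. To compute $\Omega^2 V$ I would form $\PC(\Omega V) = \PC(k) \oplus \PC(W)$ and identify the kernel of the covering map $\PC(\Omega V) \surj \Omega V$ by tracking diagonal submodules, exactly as in the proof of \cref{PSL2OtherExtsCase1}. The unique socle $V$ of $\Omega V$ is covered by a diagonal submodule involving the $V$ in the heart of $\PC(k)$ and the $V$-socle of $\PC(W)$, so the $k$-socle of $\PC(k)$ survives into $\Omega^2 V$ and contributes an additional composition factor below the head. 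Iterating yields explicit module-theoretic descriptions of $\Omega^2 V$ and $\Omega^3 V$.

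The goal is then a recursive structural description showing that $\Omega^{n+3} V$ differs from $\Omega^n V$ by a controlled addition of composition factors. Concretely, one should find that $\head \Omega^{n+3} V \cong \head \Omega^n V \oplus k \oplus V \oplus W$ — this is consistent with the theorem, since the other $\Ext$ computations being claimed force $\dim \Ext_G^{n+3}(V, S) = \dim \Ext_G^n(V, S) + 1$ for each simple $S$ in the principal block. Combined with explicit base cases at $n = 1, 2, 3$, induction yields the claim. The statement for $W$ then follows by the graph automorphism of the Brauer graph swapping $V \leftrightarrow W$ while fixing $k$.

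The main obstacle is pinning down the recursive relation: because $\PC(k)$, $\PC(V)$ and $\PC(W)$ all have non-uniserial hearts (indeed $\PC(k)$ has heart $V \oplus W$, which creates a genuine merging between the $V$-side and $W$-side of the computation), the diagonal submodules at each stage must be tracked carefully. Showing that the extra composition factors accumulating in $\head \Omega^{n+3} V$ relative to $\head \Omega^n V$ arise precisely from the $k$-socle of $\PC(k)$ not being absorbed at the appropriate stage of the covering map is the crux of the argument, and once this pattern is verified for the first few Heller translates the induction step should go through cleanly.
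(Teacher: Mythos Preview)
Your strategy is essentially the paper's: compute the Heller translates $\Omega^n V$ structurally and read off the multiplicity of $k$ in the head via \cref{OmegaCohomology}. Your base case $\Omega V \sim [k \oplus Y_V \mid V]$ (with $Y_V$ the uniserial part of $\heart\PC(V)$) and your expectation that the head grows by one copy each of $k$, $V$, $W$ every three steps are both correct and match the paper's conclusions.

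What you have not supplied, and what the paper does supply, is the structural mechanism that makes the induction go. The paper identifies six explicit indecomposable modules of period~3 under $\Omega$ (the shapes $[V\mid k]$, $[Y_V\mid V]$, $[k\mid W]$ and the three with $V\leftrightarrow W$), and then proves in \cref{3mod4structure} that $\Omega^n V$ is a single ``zigzag'' of diagonal extensions of these period-3 pieces, with one extra simple ($V$ for $n$ even, $k$ for $n$ odd) sitting in the middle. The inductive step is then local: one checks how $\Omega$ acts on short overlapping segments (e.g.\ $E_6$, $F_6$) and patches. Your proposed formulation --- a direct comparison of $\Omega^{n+3}V$ with $\Omega^n V$ --- would require exactly this decomposition to make precise, since without it there is no clean statement of what ``differs by a controlled addition'' means for an indecomposable module whose projective cover changes at every step. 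In short, you have correctly located the obstacle but not removed it; the paper's period-3 building blocks are the missing ingredient.
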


Recall that as our decision as to which module is \(V\) and which is \(W\) was arbitrary, it suffices to only prove the result for one. Before giving the proof, we must first set out some notation. Throughout this discussion and the proof which follows, the reader is encouraged to refer often to \eqref{3mod4Projectives}. Let \(Y_V\), \(Y_W\) be such that \(\heart \PC(V) \cong k \oplus Y_V\) and \(\heart \PC(W) \cong Y_W \oplus k\). Then we can easily see that \(\Omega V\) has shape \([k \oplus Y_V \mid V]\), which has projective cover \(\perm \oplus \PC(W)\).

Throughout the proof, we investigate extensions of a collection of six modules, appearing as submodules of \(\Omega V\) and \(\Omega k\). We briefly examine these below and in particular show that they are periodic.

We first observe that, since \(\head Y_V \cong W\), \(\PC(W) \surj [Y_V \mid V]\) with kernel of shape \([k \mid W]\), onto which \(\perm\) surjects with kernel of shape \([V \mid k]\) which is in turn covered by \(\PC(V)\) with kernel \([Y_V \mid V]\) again.

We also note that \(\PC(W) \surj [W \mid k]\) with kernel of shape \([Y_W \mid W]\), which is covered by \(\PC(V)\) with kernel \([k \mid V]\) and which one may in turn cover with \(\perm\) and kernel \([W \mid k]\), returning us to the module shape we started with. We illustrate the periodicity of the modules in question below.

\begin{center}
	\begin{tikzcd}[row sep = 0.5em]
		\																&	\bigmoduleshape{V}{k} 	\arrow[bend right = 30,dl,"\Omega",swap]\\
		\bigmoduleshape{Y_V}{V} 	\arrow[bend right = 30,dr,"\Omega",swap]	&	\ \\
		\																&	\bigmoduleshape{k}{W}	\arrow[bend right = 40,uu,"\Omega",swap]	
	\end{tikzcd} \qquad \qquad \qquad
	\begin{tikzcd}[row sep = 0.5em]
		\															&	\bigmoduleshape{W}{k}	\arrow[bend right = 30, dl, "\Omega",swap] \\
		\bigmoduleshape{Y_W}{W} 	\arrow[bend right = 30,dr,"\Omega",swap]	&	\ \\
		\															&	\bigmoduleshape{k}{V}	\arrow[bend right = 40,uu,"\Omega",swap]
	\end{tikzcd}
\end{center}

We show that all of \(\Omega^n V\), aside from one irreducible constituent, is made up of a series of diagonal extensions of the above periodic modules. When \(n\) is odd \(\Omega^n V\) is a diagonal extension of \(k\) by two modules \(E_n\) and \(F_n\) and when \(n\) is even \(\Omega^n V\) is a diagonal extension of \(V\) by two modules which we shall also denote by \(E_n\) and \(F_n\). These modules are such that \(E_{2n}\) has shape 
\begin{center}
	\begin{tikzpicture}
		\matrix(A)[matrix of math nodes, nodes in empty cells]{
			&	V	&		&	Y_V	&		&	k	&		&	V	&		&	Y_V	&		&	k	&		\\
			&		&	k	&		&	V	&		&	W	&		&	k	&		&	V	&		&	W	\\
		};
		\draw[red, densely dotted, shorten <>= 0.25cm] (A-1-2.center) -- (A-2-1.center);
		\draw[red, dashed, shorten <>= 0.3cm] (A-1-2.center) -- (A-2-3.center);
		\draw[red, dashed, shorten <>= 0.3cm] (A-2-3.center) -- (A-1-4.center);
		\draw[red, dashed, shorten <>= 0.3cm] (A-1-4.center) -- (A-2-5.center);
		\draw[red, dashed, shorten <>= 0.3cm] (A-2-5.center) -- (A-1-6.center);
		\draw[red, dashed, shorten <>= 0.3cm] (A-1-6.center) -- (A-2-7.center);
		\draw[red, dashed, shorten <>= 0.3cm] (A-2-7.center) -- (A-1-8.center);
		\draw[red, dashed, shorten <>= 0.3cm] (A-1-8.center) -- (A-2-9.center);
		\draw[red, dashed, shorten <>= 0.3cm] (A-2-9.center) -- (A-1-10.center);
		\draw[red, dashed, shorten <>= 0.3cm] (A-1-10.center) -- (A-2-11.center);
		\draw[red, dashed, shorten <>= 0.3cm] (A-2-11.center) -- (A-1-12.center);
		\draw[red, dashed, shorten <>= 0.3cm] (A-1-12.center) -- (A-2-13.center);
	\end{tikzpicture}
\end{center}
and \(F_{2n}\) has shape 
\begin{center}
	\begin{tikzpicture}
		\matrix(A)[matrix of math nodes, nodes in empty cells]{
			&	W	&		&	k	&		&	Y_W	&		&	W	&		&	k	&		&	Y_W	&		\\
		k	&		&	V	&		&	W	&		&	k	&		&	V	&		&	W	&		&		\\
		};
		\draw[red, densely dotted, shorten <>= 0.25cm] (A-1-12.center) -- (A-2-13.center);
		\draw[red, dashed, shorten <>= 0.3cm] (A-2-1.center) -- (A-1-2.center);
		\draw[red, dashed, shorten <>= 0.3cm] (A-1-2.center) -- (A-2-3.center);
		\draw[red, dashed, shorten <>= 0.3cm] (A-2-3.center) -- (A-1-4.center);
		\draw[red, dashed, shorten <>= 0.3cm] (A-1-4.center) -- (A-2-5.center);
		\draw[red, dashed, shorten <>= 0.3cm] (A-2-5.center) -- (A-1-6.center);
		\draw[red, dashed, shorten <>= 0.3cm] (A-1-6.center) -- (A-2-7.center);
		\draw[red, dashed, shorten <>= 0.3cm] (A-2-7.center) -- (A-1-8.center);
		\draw[red, dashed, shorten <>= 0.3cm] (A-1-8.center) -- (A-2-9.center);
		\draw[red, dashed, shorten <>= 0.3cm] (A-2-9.center) -- (A-1-10.center);
		\draw[red, dashed, shorten <>= 0.3cm] (A-1-10.center) -- (A-2-11.center);
		\draw[red, dashed, shorten <>= 0.3cm] (A-2-11.center) -- (A-1-12.center);
	\end{tikzpicture}
\end{center}
where each red dashed line indicates a non-split extension and the dotted line indicates that the pattern continues in this way. Each \(E_{2n}\) and \(F_{2n}\) contains \(n\) of the periodic modules mentioned above, so one can see that above we have drawn \(E_{12}\) and \(F_{12}\) (with possible continuations). Next, \(E_{2n+1}\) and \(F_{2n+1}\) have respective shapes
\begin{center}
	\begin{tikzpicture}
		\matrix(E)[matrix of math nodes, nodes in empty cells] at (0,0) {
			&	k	&		&	V	&		&	Y_V	&		&	k	&		&	V	&		&	Y_V	&		\\
			&		&	W	&		&	k	&		&	V	&		&	W	&		&	k	&		&	V	\\
		};
		\draw[red, densely dotted, shorten <>= 0.25cm] (E-1-2.center) -- (E-2-1.center);
		\draw[red, dashed, shorten <>= 0.3cm] (E-1-2.center) -- (E-2-3.center);
		\draw[red, dashed, shorten <>= 0.3cm] (E-2-3.center) -- (E-1-4.center);
		\draw[red, dashed, shorten <>= 0.3cm] (E-1-4.center) -- (E-2-5.center);
		\draw[red, dashed, shorten <>= 0.3cm] (E-2-5.center) -- (E-1-6.center);
		\draw[red, dashed, shorten <>= 0.3cm] (E-1-6.center) -- (E-2-7.center);
		\draw[red, dashed, shorten <>= 0.3cm] (E-2-7.center) -- (E-1-8.center);
		\draw[red, dashed, shorten <>= 0.3cm] (E-1-8.center) -- (E-2-9.center);
		\draw[red, dashed, shorten <>= 0.3cm] (E-2-9.center) -- (E-1-10.center);
		\draw[red, dashed, shorten <>= 0.3cm] (E-1-10.center) -- (E-2-11.center);
		\draw[red, dashed, shorten <>= 0.3cm] (E-2-11.center) -- (E-1-12.center);
		\draw[red, dashed, shorten <>= 0.3cm] (E-1-12.center) -- (E-2-13.center);

		\matrix(F)[matrix of math nodes, nodes in empty cells] at (0, -2) {
			&	Y_W	&		&	W	&		&	k	&		&	Y_W	&		&	W	&		&	k	&		\\
		W	&		&	k	&		&	V	&		&	W	&		&	k	&		&	V	&		&		\\
		};
		\draw[red, densely dotted, shorten <>= 0.25cm] (F-1-12.center) -- (F-2-13.center);
		\draw[red, dashed, shorten <>= 0.3cm] (F-2-1.center) -- (F-1-2.center);
		\draw[red, dashed, shorten <>= 0.3cm] (F-1-2.center) -- (F-2-3.center);
		\draw[red, dashed, shorten <>= 0.3cm] (F-2-3.center) -- (F-1-4.center);
		\draw[red, dashed, shorten <>= 0.3cm] (F-1-4.center) -- (F-2-5.center);
		\draw[red, dashed, shorten <>= 0.3cm] (F-2-5.center) -- (F-1-6.center);
		\draw[red, dashed, shorten <>= 0.3cm] (F-1-6.center) -- (F-2-7.center);
		\draw[red, dashed, shorten <>= 0.3cm] (F-2-7.center) -- (F-1-8.center);
		\draw[red, dashed, shorten <>= 0.3cm] (F-1-8.center) -- (F-2-9.center);
		\draw[red, dashed, shorten <>= 0.3cm] (F-2-9.center) -- (F-1-10.center);
		\draw[red, dashed, shorten <>= 0.3cm] (F-1-10.center) -- (F-2-11.center);
		\draw[red, dashed, shorten <>= 0.3cm] (F-2-11.center) -- (F-1-12.center);
	\end{tikzpicture}
\end{center}
where \(E_{2n+1}\) contains \(n+1\) of the periodic modules seen above and \(F_{2n+1}\) contains \(n\) (so we have drawn \(E_{11}\) and \(F_{13}\)). We will show that the shape of \(\Omega^{2n} V\) is as on the left below, and the shape of \(\Omega^{2n+1} V\) is as on the right. Here the \(k\) is part of a diagonal non-split extension with the \(V\) on the right hand side of \(E_{2n+1}\) and the \(W\) on the left hand side of \(F_{2n}\) and similarly the \(V\) is part of a diagonal non-split extension with \(W\) and \(k\).
\begin{center}
	\begin{tikzpicture}
		\matrix(left)[matrix of math nodes, nodes in empty cells] at (-2,0) {
				&	V	&			\\
		E_{2n}	&		&	F_{2n}	\\
		};

		\draw[red, dashed, shorten <>= 0.3cm] (left-2-1.center) -- (left-1-2.center);
		\draw[red, dashed, shorten <>= 0.3cm] (left-1-2.center) -- (left-2-3.center);

		\matrix(right)[matrix of math nodes, nodes in empty cells] at (2,0) {
				&	k	&				\\
		E_{2n+1}&		&	F_{2n+1}	\\
		};

		\draw[red, dashed, shorten <>= 0.3cm] (right-2-1.center) -- (right-1-2.center);
		\draw[red, dashed, shorten <>= 0.3cm] (right-1-2.center) -- (right-2-3.center);
	\end{tikzpicture}
\end{center}

In full, we claim the following.

\begin{lemma} \label{3mod4structure}
 	
	The module \(\Omega^{2n} V\) is of shape

	\begin{center}
		\begin{tikzpicture}
			\matrix(A)[matrix of math nodes, nodes in empty cells] {
					&	k	&		&	V	&		&	Y_V	&		&	k	&		&	\bm{V}	&		&	W	&		&	k	&		&	Y_W	&		&	W	&		\\
					&		&	W	&		&	k	&		&	V	&		&	W	&		&	k	&		&	V	&		&	W	&		&	k	&		&		\\
			};

			\draw[red, densely dotted, shorten <>= 0.25cm] (A-1-2.center) -- (A-2-1.center);
			\draw[red, densely dotted, shorten <>= 0.25cm] (A-1-18.center) -- (A-2-19.center);

			\draw[red, dashed, shorten <>= 0.3cm] (A-1-2.center) -- (A-2-3.center);
			\draw[red, dashed, shorten <>= 0.3cm] (A-2-3.center) -- (A-1-4.center);
			\draw[red, dashed, shorten <>= 0.3cm] (A-1-4.center) -- (A-2-5.center);
			\draw[red, dashed, shorten <>= 0.3cm] (A-2-5.center) -- (A-1-6.center);
			\draw[red, dashed, shorten <>= 0.3cm] (A-1-6.center) -- (A-2-7.center);
			\draw[red, dashed, shorten <>= 0.3cm] (A-2-7.center) -- (A-1-8.center);
			\draw[red, dashed, shorten <>= 0.3cm] (A-1-8.center) -- (A-2-9.center);
			\draw[red, dashed, shorten <>= 0.3cm] (A-2-9.center) -- (A-1-10.center);
			\draw[red, dashed, shorten <>= 0.3cm] (A-1-10.center) -- (A-2-11.center);
			\draw[red, dashed, shorten <>= 0.3cm] (A-2-11.center) -- (A-1-12.center);
			\draw[red, dashed, shorten <>= 0.3cm] (A-1-12.center) -- (A-2-13.center);
			\draw[red, dashed, shorten <>= 0.3cm] (A-2-13.center) -- (A-1-14.center);
			\draw[red, dashed, shorten <>= 0.3cm] (A-1-14.center) -- (A-2-15.center);
			\draw[red, dashed, shorten <>= 0.3cm] (A-2-15.center) -- (A-1-16.center);
			\draw[red, dashed, shorten <>= 0.3cm] (A-1-16.center) -- (A-2-17.center);
			\draw[red, dashed, shorten <>= 0.3cm] (A-2-17.center) -- (A-1-18.center);
		\end{tikzpicture}
	\end{center}

	and \(\Omega^{2n+1} V\) is of shape

	\begin{center}
		\begin{tikzpicture}
			\matrix(A)[matrix of math nodes, nodes in empty cells] {
					&	Y_V	&		&	k	&		&	V	&		&	Y_V	&		&	\bm{k}	&		&	Y_W	&		&	W	&		&	k	&		&	Y_W	&		\\
					&		&	V	&		&	W	&		&	k	&		&	V	&		&	W	&		&	k	&		&	V	&		&	W	&		&	\\
			};

			\draw[red, densely dotted, shorten <>= 0.25cm] (A-1-2.center) -- (A-2-1.center);
			\draw[red, densely dotted, shorten <>= 0.25cm] (A-1-18.center) -- (A-2-19.center);

			\draw[red, dashed, shorten <>= 0.3cm] (A-1-2.center) -- (A-2-3.center);
			\draw[red, dashed, shorten <>= 0.3cm] (A-2-3.center) -- (A-1-4.center);
			\draw[red, dashed, shorten <>= 0.3cm] (A-1-4.center) -- (A-2-5.center);
			\draw[red, dashed, shorten <>= 0.3cm] (A-2-5.center) -- (A-1-6.center);
			\draw[red, dashed, shorten <>= 0.3cm] (A-1-6.center) -- (A-2-7.center);
			\draw[red, dashed, shorten <>= 0.3cm] (A-2-7.center) -- (A-1-8.center);
			\draw[red, dashed, shorten <>= 0.3cm] (A-1-8.center) -- (A-2-9.center);
			\draw[red, dashed, shorten <>= 0.3cm] (A-2-9.center) -- (A-1-10.center);
			\draw[red, dashed, shorten <>= 0.3cm] (A-1-10.center) -- (A-2-11.center);
			\draw[red, dashed, shorten <>= 0.3cm] (A-2-11.center) -- (A-1-12.center);
			\draw[red, dashed, shorten <>= 0.3cm] (A-1-12.center) -- (A-2-13.center);
			\draw[red, dashed, shorten <>= 0.3cm] (A-2-13.center) -- (A-1-14.center);
			\draw[red, dashed, shorten <>= 0.3cm] (A-1-14.center) -- (A-2-15.center);
			\draw[red, dashed, shorten <>= 0.3cm] (A-2-15.center) -- (A-1-16.center);
			\draw[red, dashed, shorten <>= 0.3cm] (A-1-16.center) -- (A-2-17.center);
			\draw[red, dashed, shorten <>= 0.3cm] (A-2-17.center) -- (A-1-18.center);
		\end{tikzpicture}
	\end{center}

	where we have put the irreducible module between \(E_n\) and \(F_n\) in bold.
\end{lemma}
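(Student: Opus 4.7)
We proceed by induction on \(n\), leveraging the two \(\Omega\)-orbits of period three displayed immediately before the lemma: each of the six periodic modules \([Y_V\mid V]\), \([k\mid W]\), \([V\mid k]\), \([Y_W\mid W]\), \([k\mid V]\), \([W\mid k]\) cycles under \(\Omega\). Consequently, applying \(\Omega\) to a module built by diagonally gluing copies of these blocks amounts to replacing each block by its \(\Omega\)-image, provided that the central irreducible and the two endpoints are handled carefully.

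For the base case, compute \(\Omega V\) directly from the structure of \(\PC(V)\) in \eqref{3mod4Projectives}: the kernel of \(\PC(V)\surj V\) is \([k\oplus Y_V\mid V]\). Redrawn in the zigzag convention this is \(E_1=[Y_V\mid V]\) with a central \(\bm{k}\) attached to the socle \(V\) of \(E_1\) via the unique non-split extension and with \(F_1\) empty, matching the \(n=0\) case of the second formula. A further one-step calculation of \(\Omega^2 V\) verifies the \(n=1\) case of the first formula and initialises the even case.

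For the inductive step, read off \(\head\Omega^m V\) from the claimed zigzag, form \(\PC(\Omega^m V)\) as the appropriate direct sum of copies of \(\perm\), \(\PC(V)\), and \(\PC(W)\) using \eqref{3mod4Projectives}, and describe the covering map one periodic block at a time. For each block \([X\mid Y]\) in the zigzag, the cover \(\PC(\head X)\surj[X\mid Y]\) has kernel equal to the successor block in the corresponding \(\Omega\)-orbit, so the chain of blocks shifts by one. The diagonal extensions between neighbouring blocks are preserved under \(\Omega\) because \(\Omega\) is an auto-equivalence of the stable module category, and each relevant \(\Ext^1\) is one-dimensional by the Brauer graph, so once the existence of a non-split extension is forced on one side it is pinned down on the other.

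The main obstacle is identifying the central irreducible correctly at each step and verifying that \(E_{m+1}\) and \(F_{m+1}\) attach to it via non-split extensions on both sides, rather than splitting off as direct summands. Indecomposability of \(\Omega^{m+1}V\) via \cref{HellerIndecomposable} rules out any direct-summand decomposition, while a careful comparison of dimensions against \(\PC(\Omega^m V)\) shows that exactly one copy of an irreducible survives uncancelled in the centre: a bold \(\bm{V}\) when passing from \(\Omega^{2n+1}V\) to \(\Omega^{2n+2}V\) and a bold \(\bm{k}\) when passing from \(\Omega^{2n}V\) to \(\Omega^{2n+1}V\). This residue then attaches non-trivially to both adjacent blocks because the relevant \(\Ext^1\)'s are non-zero from the Brauer graph, closing the induction.
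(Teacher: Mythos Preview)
Your outline follows the same inductive skeleton as the paper, but the crucial step is not actually proved. The heart of the lemma is the claim that applying \(\Omega\) to the zigzag replaces each periodic block by its \(\Omega\)-image and re-glues them in the same pattern. Your justification for this is that ``\(\Omega\) is an auto-equivalence of the stable module category'' and ``each relevant \(\Ext^1\) is one-dimensional''. Neither of these gives module-level information: the stable auto-equivalence controls morphisms only up to those factoring through projectives, and knowing \(\dim\Ext^1=1\) tells you a non-split extension \emph{exists}, not that the one you have in hand is non-split or that the kernel of your covering map has the asserted shape. In particular, from a short exact sequence \(0\to B\to M\to A\to 0\) you do not get \(0\to\Omega B\to\Omega M\to\Omega A\to 0\); the horseshoe construction produces a kernel that is only \emph{stably} \(\Omega M\), and sorting out the projective discrepancy is exactly the work you have skipped.

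What the paper does, and what your argument is missing, is the explicit identification of where each socle constituent of the zigzag must embed diagonally in \(\PC(\Omega^m V)\). For instance, an extension of shape \([W\oplus k\mid V]\) inside the zigzag forces the covering \(V\) to sit diagonally in the subquotient \(\head Y_W\oplus V\) of \(\PC(W)\oplus\PC(k)\), and this in turn pins down the corresponding extension in the kernel. The paper carries this out for a representative segment (\(F_6\), \(E_6\)) and explains why that suffices; this concrete tracking of diagonal submodules is the actual content of the proof. Your appeals to indecomposability via \cref{HellerIndecomposable} and to dimension counting are reasonable sanity checks, but they do not by themselves determine the submodule lattice of \(\Omega^{m+1}V\): there can be non-isomorphic indecomposables with the same head, socle and composition factors. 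Likewise, your final sentence --- that the central irreducible ``attaches non-trivially to both adjacent blocks because the relevant \(\Ext^1\)'s are non-zero'' --- is a non sequitur; indecomposability rules out a direct-sum split but does not on its own locate the attachment or exclude other configurations.
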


\begin{proof}
	We proceed by induction. For our base case, we observe that \(\Omega^0 V \cong V\) is already of the required form. The bold \(V\) in \(\Omega^{2n} V\) contributes an additional module of shape \([Y_V \mid V]\) to \(\Omega^{2n+1} V\) which we associate to \(E_{2n+1}\), and similarly the bold \(k\) in \(\Omega^{2n+1} V\) contributes an additional \([W \mid k]\) to \(\Omega^{2n+2} V\) which we associate to \(F_{2n+2}\).

	We first show that \(\Omega F_{2n} \sim F_{2n+1}\), \(\Omega E_{2n+1} \sim E_{2n+2}\), \(E_{2n+1}\) is an extension of \(\Omega E_{2n}\) by \([Y_V \mid V]\) and \(F_{2n+2}\) is an extension of \(\Omega F_{2n+1}\) by \([W \mid k]\). 

	We start with \(\Omega F_{2n} \sim F_{2n+1}\).	Due to the layout of the extensions as drawn above, it is sufficient to only consider a small case. To see this, note that \(F_6\) is a quotient of \(F_{2n}\) (by a submodule of shape \(F_{2n-6}\)) for all \(n \geq 3\). Noting that \(\PC(F_{2n}) \cong \PC(F_{2n-6}) \oplus \PC(F_6)\), one can easily show that \(\Omega F_{2n}\) is then a known extension of \(\Omega F_6\) by \(\Omega F_{2n-6}\).

	The smallest \(n\) from which we may obtain all the required information is \(6\), thus this is the case we shall consider. We first give \(F_6\) for convenience.
	\begin{center}
		\begin{tikzpicture}
			\matrix(F)[matrix of math nodes, nodes in empty cells]{
					&	W	&		&	k	&		&	Y_W	&	\\
				k	&		&	V	&		&	W	&		&	\\
			};

			\draw[red, dashed, shorten <>= 0.3cm] (F-2-1.center) -- (F-1-2.center);
			\draw[red, dashed, shorten <>= 0.3cm] (F-1-2.center) -- (F-2-3.center);
			\draw[red, dashed, shorten <>= 0.3cm] (F-2-3.center) -- (F-1-4.center);
			\draw[red, dashed, shorten <>= 0.3cm] (F-1-4.center) -- (F-2-5.center);
			\draw[red, dashed, shorten <>= 0.3cm] (F-2-5.center) -- (F-1-6.center);
		\end{tikzpicture}
	\end{center}
	\label{FirstUseOfArgument} The following argument will be vital for much of the rest of this section. In order to save trees, we will not be drawing all of the below representations of modules and their extensions in future. The projective cover of \(F_6\) is 
	\[\threemoduleshape{\redhigh{W}}{\bluehigh{Y_W} \oplus \redhigh{k}}{W} \bigoplus \threemoduleshape{\redhigh{k}}{\bluehigh{V \oplus W}}{k} \bigoplus \threemoduleshape{\redhigh{V}}{{k} \oplus \bluehigh{Y_V}}{V}\]
	where the quotients highlighted in red indicate areas in \(F_6\) in which there is no choice as to which projective module will cover it. Conversely, we indicate constituents in blue where the minimal submodule of the module containing these constituents is not determined up to isomorphism by its shape. In other words, the projective cover of the module (taken to refer to the surjective maps as well as the projective module) is not determined by the shape. For example, a module of shape \([V \oplus W \mid k]\) could (since such modules are not unique) have a covering by \(\PC(V) \oplus \PC(W)\) such that \(\PC(V) \surj [V \mid k]\) and \(\PC(W) \surj W\), one such that \(\PC(V) \surj V\) and \(\PC(W) \surj [W \mid k]\) or some covering in which the copy of \(k\) is diagonal between \(\PC(V)\) and \(\PC(W)\). 

	Taking note of the extensions in the second row of \(F_6\) (as drawn above), we see that \(V\) must be in an extension of both \(W\) and \(k\), and \(W\) in an extension of \(k\) and \(Y_W\). This indicates that these three modules must correspond to some diagonal quotient of the corresponding projective cover. Thus in the modules highlighted in blue above, our quotient involves some diagonal submodule of the constituents of adjacent (as written) projective covers. For example, the non-split extension of shape \([W \oplus k \mid V]\) must stem from taking \(V\) as a diagonal submodule of the subquotient \(\head Y_W \oplus V\) in \(\PC(W) \oplus \PC(k)\). Similarly, the non-split extension \([k \oplus Y_W \mid W]\) must stem from taking \(W\) as a diagonal submodule of the subquotient \(W \oplus \soc Y_V\) in \(\PC(k) \oplus \PC(V)\).

	It then remains to see what a covering map of the described shape has as its kernel. The diagonal submodules occurring in the previous paragraph give rise to diagonal extensions of the heads of the kernel of the covering map. For example, covering \([W \oplus k \mid V]\) as described above has kernel of shape \([k \oplus W \oplus V \mid Y_V \oplus k]\). Similarly, the described covering of \([k \oplus Y_W \mid W]\) has kernel \([V \oplus k \oplus W \mid k \oplus V]\). Patching these together, removing one copy of \(k\) from the head to deal with the \(k\) indicated in red above and being careful not to count any constituents of \(\PC(k)\) twice, we obtain \(\Omega F_6\) of shape
	\begin{center}
		\begin{tikzpicture}
			\matrix(F)[matrix of math nodes, nodes in empty cells]{
					&	Y_W	&		&	W	&		&	k	&	\\
				W	&		&	k	&		&	V	&		&	\\
			};

			\draw[red, dashed, shorten <>= 0.3cm] (F-2-1.center) -- (F-1-2.center);
			\draw[red, dashed, shorten <>= 0.3cm] (F-1-2.center) -- (F-2-3.center);
			\draw[red, dashed, shorten <>= 0.3cm] (F-2-3.center) -- (F-1-4.center);
			\draw[red, dashed, shorten <>= 0.3cm] (F-1-4.center) -- (F-2-5.center);
			\draw[red, dashed, shorten <>= 0.3cm] (F-2-5.center) -- (F-1-6.center);
		\end{tikzpicture}
	\end{center}
	with \(k\) joined to the \(V\) in the head of \(Y_W\). This is the same shape as \(F_7\), as required. A subset of this argument also yields \(\Omega F_2\) and \(\Omega F_4\).

	We next show that \(E_{2n+1}\) is an extension of \(\Omega E_{2n}\) by \([Y_V \mid V]\). As before, we need only consider a small case since \(E_{2n}\) is an extension of \(E_6\) by \(E_{2n-6}\) for all \(n \geq 3\) and so we give \(E_6\) below, where \(k\) is attached to the head of \(Y_V\).
	\begin{center}
		\begin{tikzpicture}
			\matrix(F)[matrix of math nodes, nodes in empty cells]{
				V	&		&	Y_V	&		&	k	&		\\
					&	k	&		&	V	&		&	W	\\
			};

			\draw[red, dashed, shorten <>= 0.3cm] (F-1-1.center) -- (F-2-2.center);
			\draw[red, dashed, shorten <>= 0.3cm] (F-2-2.center) -- (F-1-3.center);
			\draw[red, dashed, shorten <>= 0.3cm] (F-1-3.center) -- (F-2-4.center);
			\draw[red, dashed, shorten <>= 0.3cm] (F-2-4.center) -- (F-1-5.center);
			\draw[red, dashed, shorten <>= 0.3cm] (F-1-5.center) -- (F-2-6.center);
		\end{tikzpicture}
	\end{center}
	The projective cover of this, with colour highlights as in the previous case, is then
	\[\threemoduleshape{\redhigh{V}}{Y_V \oplus \bluehigh{k}}{V} \bigoplus \threemoduleshape{\redhigh{W}}{\bluehigh{k} \oplus \bluehigh{Y_W}}{W} \bigoplus \threemoduleshape{\redhigh{k}}{\bluehigh{V} \oplus \redhigh{W}}{k}.\]
	We may then proceed in a similar manner to the \(F_6\) case above. The extension of \(V \oplus Y_V\) by \(k\) must come from a diagonal \(k\) in the subquotient \(k \oplus k\) of \(\PC(V) \oplus \PC(W)\) and the extension \([Y_V \oplus k \mid V]\) can only come from a diagonal \(V\) in the subquotient \(\soc Y_W \oplus V\) of \(\PC(W) \oplus \PC(k)\). Patching together the extensions this gives us in \(\Omega E_6\), we obtain \(\Omega E_6\) of shape
	\begin{center}
		\begin{tikzpicture}
			\matrix(F)[matrix of math nodes, nodes in empty cells]{
				Y_V	&		&	k	&		&	V	&		\\
					&	V	&		&	W	&		&	k	\\
			};

			\draw[red, dashed, shorten <>= 0.3cm] (F-1-1.center) -- (F-2-2.center);
			\draw[red, dashed, shorten <>= 0.3cm] (F-2-2.center) -- (F-1-3.center);
			\draw[red, dashed, shorten <>= 0.3cm] (F-1-3.center) -- (F-2-4.center);
			\draw[red, dashed, shorten <>= 0.3cm] (F-2-4.center) -- (F-1-5.center);
			\draw[red, dashed, shorten <>= 0.3cm] (F-1-5.center) -- (F-2-6.center);
		\end{tikzpicture}
	\end{center}
	We then note that `attaching' \([Y_V \mid V]\) on the right via an extension \([Y_V \mid k \oplus V]\) gives us the shape of \(E_7\), as required. A subset of this argument also yields \(\Omega E_2\) and \(\Omega E_4\).

	Applying exactly the same argument (using the same extensions seen above and noting that \(E_5\) and \(F_7\) are respective quotients of \(E_{2n+1}\) and \(F_{2n+1}\) by \(E_{2n - 5}\) and \(F_{2n - 5}\)) yields \(\Omega E_{2n+1} \sim E_{2n+2}\) and we also get \(F_{2n+2}\) as an extension of \(\Omega F_{2n+1}\) by \([W \mid k]\) via an extension \([W \mid k \oplus V]\).

	We now return to \(\Omega^n V\). In particular, we need to deal with the interface between \(E_n\), \(F_n\) and \(V\) or \(k\). Suppose that we have a module of the required form for some even \(n\). Then the \(V\) in the middle is part of an extension of shape \([V \mid W \oplus k]\). To cover this extension, we would need to take a diagonal submodule of the subquotient \(W \oplus \head Y_V\) of \(\PC(k) \oplus \PC(V)\) and a diagonal \(k\) in the subquotient \(k \oplus k\) of \(\PC(V) \oplus \PC(W)\). Such a covering would then have kernel of shape
	\begin{center}
		\begin{tikzpicture}
			\matrix(F)[matrix of math nodes, nodes in empty cells]{
				V	&		&	Y_V	&		&	\bm{k}	&		&	Y_W	\\
					&	k	&		&	V	&			&	W	&		\\
			};

			\draw[red, dashed, shorten <>= 0.3cm] (F-1-1.center) -- (F-2-2.center);
			\draw[red, dashed, shorten <>= 0.3cm] (F-2-2.center) -- (F-1-3.center);
			\draw[red, dashed, shorten <>= 0.3cm] (F-1-3.center) -- (F-2-4.center);
			\draw[red, dashed, shorten <>= 0.3cm] (F-2-4.center) -- (F-1-5.center);
			\draw[red, dashed, shorten <>= 0.3cm] (F-1-5.center) -- (F-2-6.center);
			\draw[red, dashed, shorten <>= 0.3cm] (F-2-6.center) -- (F-1-7.center);
		\end{tikzpicture}
	\end{center}
	with \(k\) attached to the head of \(Y_V\). This corresponds to attaching an additional module of shape \([Y_V \mid V]\) to the right hand side of \(\Omega E_{2n}\), as discussed above. We can then see that if \(\Omega^{2n} V\) is of the required form, then \(\Omega^{2n+1} V\) is of the required form. A similar approach yields the odd case and so we are done.
\end{proof}

Now that we know the shape of \(\Omega^n V\) for all \(n \geq 0\), we are able to determine the dimensions of \(\H^n(G, V)\) and \(\H^n(G, W)\) for all \(n\) by investigating the modules present in its head.

\begin{proof}[Proof of \cref{3mod4}]
	The result will follow from counting the multiplicity of \(k\) in \(\head \Omega^n V\) by \cref{OmegaCohomology}. For this, we need only count how many times \(k\) appears in the heads of \(E_n\) and \(F_n\), along with the module given in the centre of \(\Omega^n V\). Due to the structure of these modules, we may work modulo 6 for this. We see that \(E_n\) contains \(\ceil*{\frac{n}{6}}\) copies of \(n\) in its head when \(n \equiv 0\), 2, 4 or \(5 \mod 6\) and \(\floor*{\frac{n}{6}}\) otherwise. Similarly, \(F_n\) contains \(\ceil*{\frac{n}{6}}\) copies of \(k\) in its head when \(n \equiv 0\) or \(4 \mod 6\) and \(\floor*{\frac{n}{6}}\) otherwise. Finally, the module given in bold is trivial when \(n\) is odd.

	We prove the case where \(n \equiv 5 \mod 6\) here and leave the remaining cases to the reader. From the above, we have
	\[\ceil*{\frac{n}{6}} + 1 + \floor*{\frac{n}{6}} = 2\ceil*{\frac{n}{6}} = 2 \left( \frac{n}{6} + \frac{1}{6} \right) = \frac{n}{3} + \frac{1}{3} = \ceil*{\frac{n}{3}}\]
	copies of \(k\) in \(\head \Omega^n V\), as required.

	As our decision as to which module is \(V\) and which is \(W\) was arbitrary, we may swap them without loss of generality and thus the result holds for both.
\end{proof}

The following results are direct corollaries of the above analysis, and follow from simply noting other aspects of the structures investigated above.

\begin{cor} \label{1mod4Exts}
	Suppose \(r = 2\) and \(q \equiv 1 \mod 4\). Let \(V\), \(W\) be as above and let \(n > 0\). Then
	\begin{align*}
		\Ext_G^n (W,V) = \Ext_G^n(V,W) &= 0,\\
		\Ext_G^n(W,W) \cong \Ext_G^n(V,V) &\cong \begin{cases}
			0	&	n \equiv 1 \mod 3,\\
			k	&	\text{otherwise}.
		\end{cases}
	\end{align*}
\end{cor}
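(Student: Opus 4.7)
The plan is to reduce the corollary to a bookkeeping exercise by invoking \cref{OmegaCohomology} and extracting the information already contained in the proof of \cref{1mod4}. That proof constructs a projective resolution of $V$ showing that $V$ is periodic of period $3$ with $\Omega^3 V \cong V$, and moreover establishes that $\head \Omega V \cong k$ while $\head \Omega^2 V \cong V$. Consequently, for every $n \geq 0$, the module $\head \Omega^n V$ is \emph{irreducible}, equal to $k$ when $n \equiv 1 \bmod 3$ and equal to $V$ when $n \equiv 0, 2 \bmod 3$.

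Next I would observe that the projective covers $\PC(V)$ and $\PC(W)$ displayed in \eqref{1mod4Projectives} are interchanged by the substitution $V \leftrightarrow W$, and the Brauer graph is symmetric under this same swap. Consequently the entire analysis of Heller translates carried out for $V$ in \cref{1mod4} transports verbatim to $W$, giving $\Omega^3 W \cong W$, $\head \Omega W \cong k$, and $\head \Omega^2 W \cong W$. The only point to verify is that the $V \leftrightarrow W$ symmetry of the Brauer graph really does induce this exchange at the level of the resolution, which is immediate since at each stage one is reading off the covering map from the same graph with the two edges relabelled.

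Finally, for any $U, Y \in \{V, W\}$ and $n > 0$, \cref{OmegaCohomology} yields
\[
\dim \Ext_G^n(U, Y) = \dim \Hom_G(\Omega^n U, Y),
\]
and this is simply the multiplicity of $Y$ in the (irreducible) head of $\Omega^n U$. Reading off the four cases: $\head \Omega^n V \in \{k, V\}$ never equals $W$, so $\Ext_G^n(V, W) = 0$ for every $n > 0$, and symmetrically $\Ext_G^n(W, V) = 0$. On the other hand $\head \Omega^n V \cong V$ precisely when $n \not\equiv 1 \bmod 3$, giving $\Ext_G^n(V, V) \cong k$ in those cases and $0$ otherwise; the same holds for $W$ by the symmetry above.

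Since each step follows directly from facts already established in the proof of \cref{1mod4}, there is no substantive obstacle; the only care needed is to articulate the $V \leftrightarrow W$ symmetry explicitly so that the analogous periodicity statements for $W$ do not require a separate calculation.
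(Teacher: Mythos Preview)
Your proposal is correct and follows the same approach as the paper: both arguments simply read off the heads of the periodic module $\Omega^n V$ computed in the proof of \cref{1mod4} and apply \cref{OmegaCohomology}, with the results for $W$ obtained by the $V \leftrightarrow W$ symmetry. The paper compresses this into a single sentence, while you spell out the bookkeeping and the symmetry argument explicitly, but there is no substantive difference.
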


\begin{proof}
	This follows directly from examining the heads of \(\Omega^n V\) (a periodic module) given in the proof of \cref{1mod4}.
\end{proof}

\begin{cor} \label{3mod4Exts}
	Suppose \(r = 2\) and \(q \equiv 3 \mod 4\). Let \(V\), \(W\) be as above and let \(n > 0\). Then
	\begin{align*}
		\dim \Ext_G^n(V,W) = \dim\Ext_G^n(W,V) &= \ceil*{\frac{n}{3}},\\
		\dim \Ext_G^n(W,W) = \dim \Ext_G^n(V,V) &= \begin{cases}
			\frac{n}{3} + 1 	&	n \equiv 0 \mod 3,\\
			\floor*{\frac{n}{3}}&	n \equiv 1 \mod 3,\\
			\ceil*{\frac{n}{3}}	&	n \equiv 2 \mod 3.
		\end{cases}
	\end{align*}
\end{cor}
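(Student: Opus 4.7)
The plan is to mirror the proof of \cref{3mod4}: by \cref{OmegaCohomology}, $\dim \Ext_G^n(V, Y)$ equals the multiplicity of the simple $Y$ in $\head \Omega^n V$, and the explicit shape of $\Omega^n V$ has already been recorded in \cref{3mod4structure}. I would first observe that the construction of $E_n$, $F_n$ and the bold middle simple in \cref{3mod4structure} is symmetric under the swap $V \leftrightarrow W$ (which also exchanges $E_n$ with $F_n$ and $Y_V$ with $Y_W$, and fixes $k$), so the same structural picture describes $\Omega^n W$. This immediately yields $\Ext_G^n(W,V) \cong \Ext_G^n(V,W)$ and $\Ext_G^n(W,W) \cong \Ext_G^n(V,V)$, reducing the problem to computing just the multiplicities of $V$ and $W$ in $\head \Omega^n V$.

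To extract these multiplicities I would walk through the three building blocks of $\Omega^n V$. The top row of $E_n$ is periodic of period three with simple heads $V$, $\head Y_V \cong W$, $k$ (up to a cyclic rotation depending on parity); the top row of $F_n$ is periodic of period three with simple heads $W$, $k$, $\head Y_W \cong V$; and the bold central simple contributes a further $V$ when $n$ is even and a $k$ (affecting neither count) when $n$ is odd. Summing these three contributions and reducing $n$ modulo $6$ produces the stated formulae case by case, and the fact that the resulting formula for $\dim \Ext_G^n(V,V)$ coincides with $\dim \H^n(G,k)$ from \cref{PSLChar23mod4Summary} is a convenient consistency check.

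I do not anticipate any genuine conceptual obstacle, since the delicate periodicity analysis has already been carried out in \cref{3mod4structure}. The real effort is purely bookkeeping across the six residue classes of $n$ modulo $6$ together with the parity governing the bold middle simple, and following the style of the proof of \cref{3mod4}, I would present one representative residue in full detail and leave the remaining cases to the reader.
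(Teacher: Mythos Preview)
Your proposal is correct and follows essentially the same approach as the paper: reduce via \cref{OmegaCohomology} to counting the multiplicities of $V$ and $W$ in $\head \Omega^n V$ using the structure from \cref{3mod4structure}, invoke the $V \leftrightarrow W$ symmetry to handle the $W$-versions, and then tally contributions from $E_n$, $F_n$ and the bold central simple modulo $6$. The paper carries out exactly this bookkeeping, giving the explicit multiplicities of $W$ and $V$ in $\head E_n$ and $\head F_n$ by residue class and summing.
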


\begin{proof}
	We repeat the above process, but counting multiplicities of \(V\) and \(W\) in \(\head \Omega^n V\) instead. As before, \(V\) and \(W\) may be swapped without loss of generality. We first deal with \(\Ext_G^n(V,W)\). The method of counting is identical to above except we may ignore the bold module in \(\Omega^n V\) (see \cref{3mod4structure}) entirely. One can observe as before that \(W\) appears in the head of \(E_n\) with multiplicity \(\floor*{\frac{n}{6}}\) if \(n \equiv 2 \mod 6\) and \(\ceil*{\frac{n}{6}}\) otherwise, and similarly for \(F_n\) with multiplicity \(\ceil*{\frac{n}{6}}\) if \(n \equiv 0\), 2, 4 or \(5 \mod 6\) and \(\floor*{\frac{n}{6}}\) otherwise. We thus get that \(W\) appears with multiplicity \(\floor*{\frac{n}{6}} + \ceil*{\frac{n}{6}} = \ceil*{\frac{n}{3}}\) for \(n \equiv 0\), 1, 2, or \(3 \mod 6\), and \(\ceil*{\frac{n}{6}} + \ceil*{\frac{n}{6}} = \ceil*{\frac{n}{3}}\) for \(n \equiv 4\), \(5 \mod 6\).

	Finally, we deal with \(\Ext_G^n(V,V)\). Again, one may verify that \(V\) appears in the head of \(E_n\) with multiplicity \(\ceil*{\frac{n}{6}}\) if \(n \equiv 0\), 3 or \(5 \mod 6\) and \(\floor*{\frac{n}{6}}\) otherwise, and exactly the same for \(F_n\). Also, \(V\) appears once in the head as the bold module whenever \(n\) is even and not otherwise. Adding these numbers gives the required result.
\end{proof}

\section{Cohomology and extensions in \texorpdfstring{\(\PGL_2(q)\)}{PGL(2,q)}}

We wish to use the results obtained on the cohomology of \(\PSL_2(q)\) to determine cohomology of related groups. There are two obvious directions to go with this: (quasisimple) groups having \(\PSL_2(q)\) as a quotient, and those (almost simple groups) having it as a normal subgroup. We will consider both of these options here, but we will only investigate \(\PGL_2(q)\) and \(\SL_2(q)\). We begin with \(\PGL_2(q)\). Since \(\PSL_2(q) \cong \PGL_2(q)\) for \(q\) even, we need only consider the case where \(q\) is odd in what follows. 

The group \(G\) is not perfect, with \(G' = \PSL_2(q)\), meaning that \(G/G' \cong C_2\) and so \(G\) has a single nontrivial 1-dimensional complex representation, \(\delta\), upon which \(G'\) acts trivially and \(g \in G\setminus G'\) acts as multiplication by \(-1\). The irreducible \(\mathbb{C}G\)-modules are either irreducible \(\mathbb{C}G'\)-modules or their tensor products with \(\delta\), except for the modules corresponding to the two modules of dimension \(\frac{1}{2}(q \pm 1)\). In a manner identical to \(G'\), we obtain irreducible modules \(M_i\) (\(1 < i \leq 2a\)) and \(N_i\) (\(1 \leq i \leq 2b\)) from a maximal torus \(T\) and Singer cycle \(S\) of \(G\), yielding \(q - 2\) distinct irreducible modules of dimension \(q \pm 1\) (where \(a\) and \(b\) are the number of irreducible modules for \(T \cap G'\) and \(S \cap G'\) respectively, up to the action of their respective normalisers). We note that this includes the modules \(M_{2a}\) and \(N_{2a}\) which are irreducible for \(G\) but may split as a direct sum of two modules upon restriction to \(G'\). Including \(k\), \(\delta\), \(V\) (the nontrivial constituent of \(\perm = M_1\)) and \(V \otimes \delta\) we have \(q + 2\) irreducible modules. But \(G\) has \(q + 2\) conjugacy classes and so this must be all of \(\Irr_{\mathbb{C}} G\).

These modules reduce modulo \(r\) in the same way as the irreducible \(\mathbb{C}G'\)-modules.

\subsection*{\texorpdfstring{\(\bm{r}\)}{r} odd}

When \(r\) is odd, the \(\Ext\) groups between irreducibles may easily be determined from what we already know of \(\PSL_2(q)\). One way to do this is to use the Hochschild--Serre spectral sequence for \(\PSL_2(q) \norm \PGL_2(q)\), which immediately collapses into a row and yields the isomorphism \(\H^n(\PGL_2(q), X) \cong \H^n(\PSL_2(q), X)^{G/G'}\) and all that remains is to determine the \(G/G'\)--fixed points of these cohomology groups. Alternatively, one may use the block structures of \(G\) and \(G'\) along with \cite[Lemma 4.3]{FeitBrauerTrees} to see that the blocks of \(G\) are similar to blocks of \(G'\) and use knowledge of \(G\) to show that they have the same Brauer trees, yielding the \(\Ext\) groups.

Using either of these methods, one notes that if an irreducible \(kG\)-module \(X\) is not the only irreducible module in its block (thus either being projective, which one may see from its dimension by \cite[Theorem 1]{BrauerNesbitt} or \cite[Theorem 2.3.2]{DavidGuidebookRepTheory}, or having \(\Ext_G^n(X, X) \cong k\) for all \(n \geq 0\) by \cref{LonelyModule}), then \(X\) lies in one of two blocks of maximal defect with \(\Ext\) groups as follows.

\begin{propn} \label{PGLOdd}
	Let \(V\) be the nontrivial irreducible module in the principal block of \(G\), \(\delta\) be the nontrivial 1-dimensional \(kG\)-module and \(V' \coloneqq V \otimes \delta\). Then, unless \(r \mid q+1\) and the \(r\)-part of \(q+1\) is not 3, for any \(X \in \{k, \delta, V, V'\}\), and \(\{Y, Z\} \in \{\{k, V\}, \{\delta, V'\}\}\),
	\[\Ext_G^n(X, X) \cong \begin{cases}
		0	&	n \equiv 1, \ 2 \mod 4,\\
		k	&	n \equiv 0, \ 3 \mod 4,
	\end{cases} \qquad
	\Ext_G^n(Y, Z) \cong \begin{cases}
		0	&	n \equiv 0, \ 3 \mod 4,\\
		k	&	n \equiv 1, \ 2 \mod 4.
	\end{cases}\]
	If \(r \mid q+1\) and the \(r\)-part of \(q+1\) is not 3, then instead \(\Ext_G^n(V, V) \cong \Ext_G^n(V', V') \cong k\) for all \(n\). Any \(\Ext\) groups not mentioned for these 4 modules are zero.
\end{propn}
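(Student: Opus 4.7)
The plan is to transport the Brauer-tree computations for $\PSL_2(q)$ from \cref{sec:PSL2Cohomology} to $G = \PGL_2(q)$ using the Morita auto-equivalence $-\otimes\delta$ together with Feit's comparison of Brauer trees over normal subgroups of coprime index.

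\emph{Step 1 (block structure).} Since $\delta^2 \cong k$ and $\delta \not\cong k$, the functor $-\otimes \delta$ is a self-inverse auto-equivalence of $kG$-mod that permutes blocks in pairs. It sends $k \mapsto \delta$ and $V \mapsto V'$, so the principal block $B_0$ (containing $k, V$) is paired with a block $B_\delta$ (containing $\delta, V'$). All remaining irreducibles $M_i, N_i$ are either projective --- recognisable from their dimensions being divisible by $\lvert \Syl_r G\rvert$ --- or lie alone in their blocks, in which case \cref{LonelyModule} applies. Vanishing of $\Ext_G^n$ between modules in distinct blocks is immediate from \cite[\textsection 13, Proposition 3]{AlperinLocal}, which gives the unlisted vanishing claims of the proposition at once.

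\emph{Step 2 (Brauer tree of $B_0$, $B_\delta$).} Because $[G:G'] = 2$ is coprime to the odd prime $r$, \cite[Lemma 4.3]{FeitBrauerTrees} applied to $\PSL_2(q) \norm \PGL_2(q)$ implies that the Brauer tree of $B_0$, viewed as a tree with exceptional multiplicity, coincides with that of the principal block of $\PSL_2(q)$ identified in \cref{sec:PSL2Cohomology}: case~2 when $r \mid q-1$ (the path $k - V - \bullet$ with exceptional vertex in the middle), and case~1 when $r \mid q+1$ (the path $k - V - \bullet$ with exceptional vertex at the end, adjacent to $V$). The auto-equivalence $- \otimes \delta$ transports the identical tree, with $k$ and $V$ relabelled as $\delta$ and $V'$, to $B_\delta$.

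\emph{Step 3 (read off the Exts).} The proofs of \cref{PSL2OtherExtsCase1,PSL2OtherExtsCase2} use only the Brauer tree data and the resulting structure of projective covers, so they apply verbatim to both $B_0$ and $B_\delta$ and yield the stated $4$-periodic patterns for $\Ext_G^n(X,X)$ and $\Ext_G^n(Y,Z)$. The exceptional case $(q+1)_r = 3$ forces $r = 3$ and $m = 1$, which is precisely the degenerate sub-case of \cref{PSL2OtherExtsCase1} where the module $Z$ appearing in the heart of $\PC(V)$ vanishes and $\Ext_G^n(V,V) \cong \Ext_G^n(k,k)$ for all $n$; applying $-\otimes\delta$ transports the same conclusion to $\Ext_G^n(V',V')$.

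\emph{Main obstacle.} The only genuinely non-routine step is Step~2: checking that Feit's lemma indeed reproduces both the correct exceptional multiplicity and the correct placement of the exceptional vertex. Should this need independent verification, one may argue instead via the Hochschild--Serre spectral sequence for $1 \to G' \to G \to C_2 \to 1$, which collapses at $E_2$ because $r \nmid 2$ and gives $\Ext_G^n(Y,Z) \cong \Ext_{G'}^n(Y,Z)^{C_2}$; since each of $k, \delta, V, V'$ restricts to $G' = \PSL_2(q)$ as either $k$ or the $\PSL_2(q)$-module also called $V$, the right-hand side is read off directly from \cref{PSL2MinusOneSummary,PSL2PlusOneSummary} and one only needs to track the $C_2$-action, which is trivial precisely when $Y$ and $Z$ lie in the same $\delta$-orbit.
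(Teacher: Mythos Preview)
Your proposal is correct and follows essentially the same approach as the paper. The paper itself does not give a formal proof of this proposition; instead, the two paragraphs preceding it sketch exactly the two methods you use --- the Hochschild--Serre spectral sequence for \(\PSL_2(q) \norm \PGL_2(q)\) (which collapses since \(r \nmid 2\)) and Feit's comparison of Brauer trees via \cite[Lemma 4.3]{FeitBrauerTrees} --- and then asserts the result. You have simply fleshed out both routes, using Feit's lemma as the primary argument and the spectral sequence as a fallback, which is precisely what the paper suggests. One minor wording point: in Step~3 you refer to \((q+1)_r = 3\) as ``the exceptional case,'' whereas in the proposition's phrasing this is actually the case covered by the main clause (the genuinely exceptional behaviour, \(\Ext_G^n(V,V) \cong k\) for all \(n\), occurs when \((q+1)_r \neq 3\), i.e.\ \(m > 1\) in \cref{PSL2OtherExtsCase1}); the mathematics is nonetheless right.
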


\subsection*{\texorpdfstring{\(\bm{r = 2}\)}{r = 2}}

In this case, we may determine almost every \(\Ext\) group with very little effort. If \(X\) is an irreducible module outside of the principal block of \(G\), then \(X\) lies alone in its block and is not projective (\(G\) has no blocks of defect zero), so \(\dim \Ext_G^n(X, X) = 1\) for all \(n \geq 0\) by \cref{LonelyModule}. The \(\Ext\) groups for the nontrivial irreducible module in the principal block are easily found using Shapiro's Lemma, yielding the following.

\begin{propn} \label{PGLChar2NontrivialModules}
	Let \(X\) be any nontrivial irreducible \(kG\)-module not isomorphic to \(\Ind_{G'}^G V\). Then \(\Ext_G^n(X, X) \cong k\) for all \(n\). Now, if \(q \equiv 1 \mod 4\), 
	\[\H^n(G, \Ind_{G'}^G V) \cong \begin{cases}
		k	&	n \equiv 1 \mod 3,\\
		0	&	\text{otherwise}
	\end{cases} \qquad \Ext_G^n(\Ind_{G'}^G V, \Ind_{G'}^G V) \cong \begin{cases}
		0	&	n \equiv 1 \mod 3,\\
		k	&	\text{otherwise},
	\end{cases}\]
	and if \(q \equiv 3 \mod 4\) then \(\dim \H^n(G, \Ind_{G'}^G V) = \ceil*{\frac{n}{3}}\) for all \(n\) and
	\[\dim \Ext_G^n(\Ind_{G'}^G V, \Ind_{G'}^G V) = \begin{cases}
		2 \frac{n}{3} + 1		&	n \equiv 0 \mod 3,\\
		\ceil*{\frac{2n}{3}}	&	n \equiv 1 \mod 3,\\
		2 \ceil*{\frac{n}{3}}	&	n \equiv 2 \mod 3.
	\end{cases}\]
\end{propn}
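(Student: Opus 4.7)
The plan is to handle two separate situations: first, any irreducible $X$ outside the principal $2$-block, and second, the unique nontrivial irreducible $U \coloneqq \Ind_{G'}^G V$ in the principal block.

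For modules $X$ outside the principal block, I would apply the argument sketched immediately before the proposition. Since the Sylow $2$-subgroups of $G = \PGL_2(q)$ are dihedral and only the principal block carries maximal defect, each non-principal block has a proper, hence cyclic, defect group, and a degree count rules out blocks of defect zero in this setting. So every such $X$ sits alone in its block, and \cref{LonelyModule} immediately yields $\Ext_G^n(X, X) \cong k$ for all $n$.

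For $U$, I would invoke Shapiro's Lemma in two forms:
\[ \H^n(G, U) \cong \H^n(G', V), \qquad \Ext_G^n(U, U) \cong \Ext_{G'}^n(V, \Res_{G'}^G U). \]
The cohomology claims then follow at once from \cref{PSLChar21mod4Summary} and \cref{PSLChar23mod4Summary}. For the $\Ext$ computation, since $G' \norm G$ has index $2$, Mackey gives $\Res_{G'}^G U \cong V \oplus V^g$ for any $g \in G \setminus G'$. The crucial structural input is that the outer automorphism induced on $G' = \PSL_2(q)$ swaps the two nontrivial simples $V$ and $W$ of the principal $kG'$-block; this must be the case, since otherwise $U$ would decompose as a sum of two distinct $G$-extensions of $V$ instead of being irreducible of dimension $2 \dim V$. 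Hence $V^g \cong W$ and
\[ \Ext_G^n(U, U) \cong \Ext_{G'}^n(V, V) \oplus \Ext_{G'}^n(V, W). \]
Feeding in \cref{1mod4Exts} and \cref{3mod4Exts} gives the stated formulas.

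The main obstacle is a clean justification that $V^g \cong W$ rather than $V^g \cong V$, i.e.\ that the outer automorphism genuinely fuses the two principal-block simples of $\PSL_2(q)$; this one should read off either from the action of $\PGL_2(q)$ on the characters of $\PSL_2(q)$ described on p.~\pageref{PSL2Irreducibles}, or from a count of irreducible $kG$-modules in the principal block against the irreducible $\mathbb{C}G$-modules reducing into it. Once this is in hand, the remaining work is a routine arithmetic verification of the claimed formulas against $\dim \Ext_{G'}^n(V, V) + \dim \Ext_{G'}^n(V, W)$ split by the residue of $n$ modulo $3$; for $q \equiv 1 \pmod 4$ the second summand vanishes, and for $q \equiv 3 \pmod 4$ the three cases yield $\tfrac{n}{3}+1+\tfrac{n}{3}$, $\floor*{n/3}+\ceil*{n/3}$, and $2\ceil*{n/3}$ respectively, which simplify to the stated expressions.
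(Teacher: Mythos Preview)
Your proposal is correct and matches the paper's intended argument. The paper itself gives no detailed proof here, only the remark that the \(\Ext\) groups ``are easily found using Shapiro's Lemma'' together with the block discussion you reproduce; your write-up is precisely the unpacking of that sentence, using Eckmann--Shapiro and Mackey to reduce to \(\Ext_{G'}^n(V,V)\oplus\Ext_{G'}^n(V,W)\) and then reading off the values from \cref{1mod4Exts,3mod4Exts,1mod4,3mod4}. Your justification that \(V^g\cong W\) via irreducibility of \(U\) (equivalently, via Clifford theory: \(U\) irreducible forces \(V\not\cong V^g\), and the only other nontrivial simple in the principal block of \(G'\) is \(W\)) is exactly the right way to close the gap you flag.
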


This leaves only \(\H^n(G, k)\). Recall that \(\PGL_2(q)\) may be regarded as a maximal subgroup of \(H \coloneqq \PSL_2(q^2)\). Using Shapiro's Lemma, we may then change the problem of determining \(\H^n(G, k)\) to a matter of determining the cohomology of some \(\PSL_2(q^2)\)-module about which we know much more. We shall begin by determining the structure of some induced modules which we shall use in the main proof. For the remainder of this section, let \(V\) and \(W\) denote the nontrivial irreducible \(kH\)-modules in the principal block. We will re-use the notation from \eqref{1mod4Projectives} (p.~\pageref{1mod4Projectives}) for submodule structure of projective \(kH\)-modules, and the characters from \cref{1mod4CharacterTable} (p.~\pageref{1mod4CharacterTable}).

\begin{propn} \label{PermutationModulePSLOnPGL}
	Let \(r = 2\) and regard \(\PGL_2(q)\) as a maximal subgroup of \(H \coloneqq \PSL_2(q^2)\). Then
	\[\Ind_{\PGL_2(q)}^{\PSL_2(q^2)} k \cong k \oplus X \oplus Y\]
	where \(\H^n(\PSL_2(q^2), Y) = 0\) for all \(n\) and \(X\) is such that \(\PC(V) \sim [X \mid \rad X]\).
\end{propn}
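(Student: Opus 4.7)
The plan is to decompose $\Ind_G^H k$ (where $G = \PGL_2(q)$ and $H = \PSL_2(q^2)$ with $q$ odd) in three steps: split off $k$ using an averaging argument, separate $Y$ via block decomposition, then identify $X$ via character theory.

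First, I would compute $[H:G] = \frac{q^2(q^2-1)(q^2+1)/2}{q(q^2-1)} = \frac{q(q^2+1)}{2}$. Since $q$ is odd, $q^2 \equiv 1 \pmod{8}$, so $(q^2+1)/2$ is odd and hence so is $[H:G]$. In particular $[H:G]$ is invertible in $k$, and by the standard averaging argument (the unit inclusion $k \hookrightarrow \Ind_G^H k$ composed with the trace map back to $k$ is multiplication by $[H:G]$) the trivial module splits off as a direct summand, yielding $\Ind_G^H k = k \oplus M$ for some $kH$-module $M$. Since $\Ind_G^H k$ is self-dual, so is $M$ by Krull--Schmidt.

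Next, I would decompose $M$ by blocks. Let $e_0 \in Z(kH)$ be the principal block idempotent and set $X \coloneqq e_0 M$, $Y \coloneqq (1 - e_0) M$. Then $M = X \oplus Y$, with $X$ lying in the principal block $B_0$ (whose simples are $k$, $V$, $W$ by \cref{1mod4Multiplicities} applied with $q^2 \equiv 1 \pmod 4$) and $Y$ a sum of modules in other blocks. Since $k \in B_0$, \cite[\textsection 13, Proposition 3]{AlperinLocal} gives $\Ext_H^n(k, S) = 0$ for every simple $S$ outside $B_0$, whence $\H^n(H, Y) = 0$ for all $n \geq 0$. Moreover, the self-duality of $M$ descends to each block summand, so $X \cong X^*$.

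Finally, to verify that $X$ has the stipulated shape, I would exploit Frobenius reciprocity and character theory. For each irreducible ordinary character $\chi$ of $H$, its multiplicity in the permutation character $\Ind_G^H \mathbf{1}$ equals $\dim \chi^G = \langle \chi|_G, \mathbf{1}_G\rangle_G$. Using \cref{1mod4CharacterTable} with $q$ replaced by $q^2$ and analysing how conjugacy classes of $H$ meet $G$ under the embedding $\PGL_2(q) \hookrightarrow \PSL_2(q^2)$ (available because every element of $\mathbb{F}_q^\times$ is a square in $\mathbb{F}_{q^2}^\times$), I would evaluate these inner products and obtain the decomposition of $\Ind_G^H \mathbf{1}$ into ordinary irreducibles of $H$. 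Reducing modulo $2$ via the Burkhardt decomposition matrix \cite{BurkhardtPSLDecomposition}, and removing the contribution of $k$ and of the (character-theoretically transparent) non-principal constituents $Y$, yields the composition multiplicities of $X$ in $B_0$. Since $\PC(V)$ is uniserial by \eqref{1mod4Projectives} and $\head X$ is pinned down by Frobenius reciprocity (and constrained by self-duality of $X$), matching radical layers of $X$ against the top of $\PC(V)$ confirms the relation $\PC(V) \sim [X \mid \rad X]$.

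The main obstacle is the third step: explicitly computing $\dim \chi^G$ requires a careful enumeration of conjugacy classes of $\PSL_2(q^2)$ intersected with the subgroup $\PGL_2(q)$, distinguishing in particular the multiplicities with which $V$ and $W$ appear in $\head(\Ind_G^H k)$. A secondary subtlety is excluding any further indecomposable splitting of the principal-block part — this should follow from the self-duality of $X$ combined with the uniseriality of $\PC(V)$, which together force $X$ to be an indecomposable uniserial quotient (or a small modification thereof) of $\PC(V)$ whose layers match the required shape.
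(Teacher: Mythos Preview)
Your overall plan matches the paper's: compute the permutation character, reduce via Burkhardt's decomposition matrix to find the principal-block composition factors \(\{k^{q-1}, V^{(q+1)/2}, W^{(q-1)/2}\}\), split off \(k\) and the non-principal part \(Y\), and then argue that the remaining principal-block piece \(X\) is indecomposable of the stated shape. One harmless difference: you split off \(k\) by the odd-index averaging argument, whereas the paper invokes the Green correspondence (since \(G\) contains \(N_H(P)\) for \(P \in \Syl_2 H\)). Both are valid.

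The genuine gap is your indecomposability argument for \(X\). You propose to use ``self-duality of \(X\) combined with the uniseriality of \(\PC(V)\)'' and to pin down \(\head X\) ``by Frobenius reciprocity'', but neither step is complete as stated. Frobenius reciprocity gives \(\Hom_H(\Ind_G^H k, S) \cong S^G\), and while this immediately shows \(k\) is not in \(\head X\) or \(\soc X\), determining the multiplicities of \(V\) and \(W\) in \(\head X\) this way would require computing \(V^G\) and \(W^G\), which you have not done. And self-duality plus uniseriality of \(\PC(V)\) does not by itself exclude a splitting such as \(X = X_1 \oplus X_2\) with \(\head X_i \in \{V, W\}\).

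The paper closes this gap with a short cohomological argument you should adopt. By Shapiro, \(\H^1(H, \Ind_G^H k) \cong \H^1(G, k) \cong k\) (since \(G/G' \cong C_2\) and \(\Char k = 2\)); since \(\H^1(H,k) = 0\) and \(\H^1(H,Y) = 0\), this forces \(\dim \H^1(H, X) = 1\). Now the Ext-quiver of the principal block (\cref{1mod4Multiplicities}) gives \(\Ext_H^1(V,V) = \Ext_H^1(V,W) = \Ext_H^1(W,V) = \Ext_H^1(W,W) = 0\), so any non-projective indecomposable module with socle contained in \(V \oplus W\) has Heller translate with trivial head, hence nonzero \(\H^1\). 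As \(\soc X\) contains no \(k\) and a composition-factor count rules out projective summands, every indecomposable summand of \(X\) contributes at least one dimension to \(\H^1(H,X)\); thus \(X\) is indecomposable. The composition factors then force \(\head X \cong \soc X \cong V\), and uniseriality of \(\PC(V)\) gives \(\PC(V) \sim [X \mid \rad X]\).
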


\begin{proof}
	Taking the notation used in \cref{1mod4CharacterTable}, the permutation character \(\psi\) of \(\PSL_2(q^2)\) acting on \(\PGL_2(q)\) is as below, where \(l_1 \leq \frac{1}{8} (q-1)\), \(l_2 \leq \frac{1}{8} (q+1)\) and \(l_3 < \frac{1}{4}(q^2 - 1)\) is not divisible by \(\frac{1}{2}(q \pm 1)\). The choice as to whether \(\psi(\gamma) = 0\) or \(\psi(\delta) = 0\) is equivalent to choosing which conjugacy class of \(\PGL_2(q) \leq \PSL_2(q^2)\) to act upon.

	\[\begin{array}{lcccccccc} 
			&	1						&	\gamma	&	\delta	&	\alpha^{\frac{1}{4}(q^2 - 1)}	&	\alpha^{\frac{1}{2}(q+1) l_1}	&	\alpha^{\frac{1}{2}(q-1)l_2}	&	\alpha^{l_3}	&	\beta^m \\ \midrule
	\psi	&	\frac{q}{2}(q^2 + 1)	&	q		&	0		&	q								&	\frac{1}{2}(q+1)				&	\frac{1}{2}(q-1)				&	0				&	0 		\\ 
	\end{array}
	\]

	One may then verify by direct computation that
	\[\psi = 1 + \xi_1 + \varphi + \sum_{\substack{q-1 \mid i \\ \text{or } q+1 \mid i}} \chi_i.\]
	By \cite[\nopp VIII(a)]{BurkhardtPSLDecomposition}, we see that the constituents of this, reduced modulo \(2\), lying in the principal block are therefore \(\{k^q, V^{\frac{1}{2}(q+1)}, W^{\frac{1}{2}(q-1)}\}\) (where \(\{a^n\}\) here denotes the \emph{multiset} containing \(a\) with multiplicity \(n\)). By the Green Correspondence, since \(G\) contains the normaliser of a Sylow \(2\)-subgroup of \(H\), we know that \(Z \cong k \oplus X \oplus Y\) for some \(X\) where \(Y\) lies outside the principal block. As we already know the composition factors of \(X\) and \(\PC(V)\) is uniserial, we need only show that \(X\) is indecomposable with head \(V\).

	Note that \(X\) must be self-dual and that, by Shapiro's Lemma, \(\H^1(G, k) \cong k \cong \H^1(H, k) \oplus \H^1(H, X) \cong \H^1(H, X)\). Further, by Frobenius Reciprocity, we know that \(Z^H \cong k\) and so \(X^H = 0\). As such, the socle and, by duality, head of \(X\) contain no trivial summands. But then if \(X \cong Y_1 \oplus Y_2\), we must have that one of \(Y_1\) or \(Y_2\) has zero first cohomology. However, any non-projective module with socle \(W\) or \(V\) must have nonzero first cohomology by \cref{OmegaCohomology} since \(\Ext_H^1(V, V) = \Ext_H^1(V, W) = 0\) (and the same for \(V\) and \(W\) swapped) and so the head of its Heller translate must be trivial. As such, we have that \(X\) is indecomposable with composition factors \(\{k^{q-1}, V^{\frac{1}{2}(q+1)}, W^{\frac{1}{2}(q-1)}\}\), and the only way this may happen with isomorphic, nontrivial head and socle is if \(\soc X \cong \rad X \cong V\) and \(X\) is as required due to the structure of \(\PC(V)\) and \(\PC(W)\).
\end{proof}

\begin{propn} \label{InducedkktoPSL}
	Let \(r = 2\) and regard \(\PGL_2(q)\) as a maximal subgroup of \(H \coloneqq \PSL_2(q^2)\). Then we have a \(kG\)-module \(\Ind_{G'}^G k \sim [k \mid k]\) such that
	\[A \coloneqq \Ind_G^H [k \mid k] \cong [k \mid V \mid k] \oplus \PC(V) \oplus Q\]
	for some module \(Q\) whose every indecomposable summand lies outside the principal block of \(kH\).
\end{propn}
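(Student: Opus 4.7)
My plan is to induce the defining short exact sequence $0 \to k \to [k\mid k] \to k \to 0$ of $kG$-modules up to $H$, obtaining
\[0 \to Z \to A \to Z \to 0\]
with $Z = \Ind_G^H k$. By \cref{PermutationModulePSLOnPGL}, $Z \cong k \oplus X \oplus Y$ where $Y$ lies outside the principal block of $kH$, so the contributions to $A$ arising from the $Y$--summands of $Z$ assemble into the desired summand $Q$ with every indecomposable constituent outside the principal block. The principal block part $A_0$ then satisfies
\[0 \to k \oplus X \to A_0 \to k \oplus X \to 0.\]

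To pin down $A_0$, I would apply $\Hom_H(-,V)$ to this sequence. From $\head X \cong V$ one obtains $\Hom_H(Z,V) \cong k$, and $\Ext_H^1(X,V) = 0$ follows by applying $\Hom_H(-,V)$ to $0 \to \rad X \to \PC(V) \to X \to 0$, using projectivity of $\PC(V)$ and the fact that $\head \rad X \cong k$; combined with $\H^1(H,V) \cong k$ from \cref{1mod4}, this gives $\Ext_H^1(Z,V) \cong k$. Provided the connecting map $\Hom_H(Z,V) \to \Ext_H^1(Z,V)$ is non-zero, we obtain $\Hom_H(A_0, V) \cong k$; together with $\Hom_H(A_0,k) \cong \Hom_{G'}(k,k) \cong k$ from Shapiro's Lemma, this makes $\head A_0 \cong k \oplus V$, and likewise $\soc A_0 \cong k \oplus V$ by self-duality of the permutation module $A$.

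With $V$ appearing exactly once in both head and socle of $A_0$, I extract $\PC(V)$ as a direct summand via the self-injective structure of $kH$ (the unique socle copy of $V$ extends through $V \hookrightarrow \PC(V)$ to a splitting), writing $A_0 \cong \PC(V) \oplus R$. Subtracting the composition factors of $\PC(V)$ (read off from \eqref{1mod4Projectives}) from those of $A_0$ (obtained from the Brauer character of $\Ind_{G'}^H k$ via Shapiro and reduction modulo $2$) leaves $R$ with factors $\{k,V,k\}$, head $k$, and socle $k$, forcing $R \cong [k\mid V\mid k]$. The main obstacle is verifying that the connecting map $k \to k$ is non-zero, since this pins down the multiplicity of $V$ in $\head A_0$; via adjunction this reduces to showing that cup product with the defining class $c \in \H^1(G,k)$ of $[k\mid k] = \Ind_{G'}^G k$ does not annihilate the distinguished element of $\Hom_G(k, \Res_G^H V)$ inside $\H^1(G, \Res_G^H V)$, which requires a careful restriction analysis exploiting the specific embedding $\PGL_2(q) \leq \PSL_2(q^2)$.
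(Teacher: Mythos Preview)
Your approach differs from the paper's, and the obstacle you flag—nonvanishing of the connecting map $\Hom_H(Z,V) \to \Ext_H^1(Z,V)$—is a genuine gap that would be hard to close directly, since it amounts to computing $(\Res_{G'}^H V)^{G'}$ via the specific embedding.

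The paper bypasses this entirely. Transitivity of induction gives $A = \Ind_G^H(\Ind_{G'}^G k) = \Ind_{G'}^H k$, so Shapiro's Lemma yields $\H^n(H, A) \cong \H^n(G', k)$ for all $n$; in particular $\H^1(H, A) = 0$ since $G' = \PSL_2(q)$ is perfect. The paper then argues that any non-projective indecomposable summand $Y$ of $A_0$ whose socle contains no copy of $k$ must satisfy $\H^1(H, Y) \neq 0$ (using $\Ext_H^1(V,V) = \Ext_H^1(V,W) = 0$ together with the uniserial shape of $\PC(V)$ and $\PC(W)$), contradicting $\H^1(H, A) = 0$. Hence every non-projective summand of $A_0$ has $k$ in its socle and, by self-duality, in its head. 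Frobenius reciprocity gives $\dim \Hom_H(k, A) = 1$, so there is exactly one such summand; the remaining summands are projective, and composition-factor counting from the character $2\psi$ forces $A_0 \cong [k \mid V \mid k] \oplus \PC(V)$. No connecting map ever needs to be evaluated.

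Your extraction of $\PC(V)$ has a secondary gap as well: knowing $\head A_0 \cong \soc A_0 \cong k \oplus V$ does not on its own yield a $\PC(V)$ summand, since self-duality of $A_0$ equally permits a decomposition $A_0 \cong M \oplus M^*$ with $\head M = k$ and $\soc M = V$. The paper's route—first showing that every non-projective summand has trivial head and socle, then identifying the projective complement—handles this automatically.
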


\begin{proof}
	By the previous proposition, the character of this induced module is simply \(2 \psi\) and so the constituents of \(A\) lying in the principal block are \(\{k^{2q}, V^{q+1}, W^{q-1}\}\). By Frobenius Reciprocity, \(\Hom_H(k, A) \cong \Hom_H (k, \Ind_{G'}^H k) \cong \Hom_{G'}(k, k)\) is 1-dimensional and so there is precisely one indecomposable summand of \(\Ind_G^H [k \mid k]\) with trivial socle (and, by duality, one summand with trivial head). 

	Now, either each indecomposable summand of \(A/Q\) is self-dual, or \(A/Q \cong M \oplus M^* \oplus N\) for some \(M\), \(N\). By Shapiro's Lemma, we know that \(\H^n(H, A) \cong \H^n(G', k)\) and in particular \(\H^1(H, A) = 0\). Since \(\Ext_H^1(V, V) = \Ext_H^1(V, W) = 0\), any non-projective indecomposable summand \(Y\) of \(A/Q\) with nontrivial socle will have nonzero first cohomology as \(\head \Omega Y\) must have a trivial summand. As such, \(A/Q\) has no non-projective indecomposable summands with nontrivial socle (and, by duality, head). We thus see that \(A/Q\) has an indecomposable self-dual summand with trivial head and socle, and all other indecomposable summands of \(A/Q\) must be projective. 

	From the composition factors of \(A/Q\), we therefore see that \(A/Q \cong [k \mid V \mid k] \oplus \PC(V)\).
\end{proof}

We are now ready to determine the cohomology of \(k\) using the above two modules.

\begin{propn} \label{PGLChar2TrivialModule}
	For all \(n \geq 0\), we have the following
	\[\dim \H^n(G, k) = \begin{cases}
		\frac{2n}{3} + 1		&	n \equiv 0 \mod 3,\\
		\ceil*{\frac{2n}{3}}	&	n \equiv 1 \mod 3,\\
		2\ceil*{\frac{n}{3}}	&	n \equiv 2 \mod 3.
	\end{cases}\]
	This is equivalent to \(\dim \H^n(G, k) = \dim \H^n(H, k) + \ceil*{\frac{n}{3}}\) where \(H \coloneqq \PSL_2(q^2)\).
\end{propn}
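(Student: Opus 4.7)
The plan is to use Shapiro's Lemma in combination with the decomposition of $\Ind_G^H k$ from \cref{PermutationModulePSLOnPGL} to reduce the statement to a cohomology computation for the $kH$-module $X$, and then to exploit the structure of $X$ as a quotient of $\PC(V)$. By Shapiro,
\[
	\H^n(G, k) \cong \H^n(H, \Ind_G^H k) \cong \H^n(H, k) \oplus \H^n(H, X) \oplus \H^n(H, Y),
\]
and \cref{PermutationModulePSLOnPGL} gives $\H^n(H, Y) = 0$, so the statement reduces to showing $\dim \H^n(H, X) = \ceil*{n/3}$ for all $n \geq 0$. The equivalence of the two displayed formulas for $\dim \H^n(G, k)$ is then a direct case-check modulo $3$ against the Fong--Milgram Poincar\'{e} coefficients for $\H^n(H, k) = \H^n(\PSL_2(q^2), k)$.

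For the computation of $\dim \H^n(H, X)$, I would use the hypothesis $\PC(V) \sim [X \mid \rad X]$: this presents $X$ as the quotient of the uniserial projective $\PC(V)$ by a submodule isomorphic to $\rad X$, giving a short exact sequence $0 \to \rad X \to \PC(V) \to X \to 0$. Since $\PC(V) = \PC(X)$ (both have head $V$), this identifies $\Omega X \cong \rad X$ and yields $\H^n(H, X) \cong \H^{n+1}(H, \Omega X)$ for all $n \geq 1$. Comparing Loewy series of uniserial submodules of $\PC(V)$, one sees that $\Omega X$ sits inside $\Omega V = \rad \PC(V)$ with quotient again isomorphic to $\Omega X$, producing the short exact sequence
\[
	0 \to \Omega X \to \Omega V \to \Omega X \to 0,
\]
which is non-split because $\Omega V$ is indecomposable, being the Heller translate of the indecomposable simple $V$.

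Applying $\Hom_H(k, -)$ to this sequence, combined with the standard dimension-shift $\H^n(H, \Omega V) \cong \H^{n-1}(H, V)$ and the period-three cohomology of $V$ from \cref{1mod4} (which makes $\H^n(H, \Omega V)$ equal to $k$ for $n \equiv 2 \pmod{3}$ with $n \geq 2$ and zero otherwise), produces a long exact sequence that relates $\dim \H^n(H, \Omega X)$ across consecutive degrees with only sparse contributions from the cohomology of $\Omega V$. Starting from the initial vanishing $\H^0(H, \Omega X) = \H^1(H, \Omega X) = 0$ (which follows from the long exact sequence associated to $0 \to \Omega X \to \PC(V) \to X \to 0$) and inputting the known value $\dim \H^1(H, X) = 1$ established in the proof of \cref{PermutationModulePSLOnPGL}, induction on $n$ should determine $\dim \H^n(H, \Omega X) = \ceil*{(n-1)/3}$ for $n \geq 1$, whence $\dim \H^n(H, X) = \ceil*{n/3}$.

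The main technical obstacle is verifying that the rank of the connecting homomorphism in the long exact sequence takes the value needed at each step---equivalently, that cup product with the non-trivial extension class $[\Omega V] \in \Ext_H^1(\Omega X, \Omega X)$ surjects onto each non-vanishing $\H^n(H, \Omega V) \cong k$. Once this is checked, adding $\dim \H^n(H, X) = \ceil*{n/3}$ to the Fong--Milgram dimension of $\H^n(H, k)$ yields both formulas in the statement.
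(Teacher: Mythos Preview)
Your setup matches the paper's proof exactly: Shapiro plus \cref{PermutationModulePSLOnPGL} reduces to $\dim \H^n(H,X)=\ceil{n/3}$, one sets $M\coloneqq\Omega X=\rad X$, and the long exact sequence coming from $0\to M\to\Omega V\to M\to 0$ together with the period-three cohomology of $V$ gives precisely the dichotomy the paper records as \eqref{mess1} versus \eqref{mess2}. Your initial values are also correct, and they do force \eqref{mess2} at $m=0$.

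The gap is that this is \emph{all} the long exact sequence gives you. From $m=1$ onward both alternatives \eqref{mess1} and \eqref{mess2} produce non-negative dimensions consistent with the alternating-sum constraint, so no amount of ``induction on $n$'' from the initial data will decide between them. Concretely, at $m=1$ one could have $(a_4,a_5,a_6)=(1,1,0)$ or $(1,2,2)$, and nothing in your argument excludes the first. You identify this correctly as ``the main technical obstacle'' but then simply assume it away (``once this is checked''); that check is the entire content of the proof.

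The paper resolves it by bringing in genuinely new information: it uses \cref{InducedkktoPSL} (the structure of $\Ind_G^H[k\mid k]$) together with Shapiro to identify $\H^n(H,[k\mid V\mid k])\cong\H^n(G',k)$, then builds an auxiliary module $Y\sim[k\mid\heart(M)\oplus[V\mid k\mid W]\mid k]$ which is periodic of period~$2$ with $\dim\H^n(H,Y)=1$ for all $n$. The long exact sequence attached to $0\to[V\mid k\mid W\mid k]\to Y\to M\to 0$ then yields the inequality $\dim\H^{3n-1}(H,M)\geq n$, which is exactly the lower bound needed to force \eqref{mess2} at every step. Without an argument of this kind (or some other independent determination of the connecting map), your proposal does not close.
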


\begin{proof}
	Let \(H \coloneqq \PSL_2(q^2)\) and regard \(G\) as a maximal subgroup of \(H\). From \cref{PermutationModulePSLOnPGL} and Shapiro's Lemma, we see that \(\H^n(G, k) \cong \H^n(H, k) \oplus \H^n(H, X)\). It is thus sufficient to show that \(\dim \H^n(H, X) = \ceil*{\frac{n}{3}}\). Since \(M \coloneqq \rad X \cong \Omega X\), this is equivalent to showing that \(\dim \H^n(H, M) = \ceil*{\frac{n-1}{3}}\). First, we recall that \(\H^n(H, V)\) is known by \cref{1mod4} to be 1-dimensional if \(n \equiv 1 \mod 3\) and zero otherwise. Now, \(\Omega V \sim [M \mid M]\), and so \(\H^n(H, [M \mid M])\) is 1-dimensional for \(n \equiv 2 \mod 3\) and zero otherwise by \cref{OmegaCohomology}, and by using this information in the long exact sequence in cohomology corresponding to the extension \(0 \to M \to \Omega V \to M \to 0\) we see that, for \(n \geq 0\), \(\H^{3n}(H, M) \cong \H^{3n+1}(H, M)\) and the sequence
	\[0 \to \H^{3n+1}(H, M) \to \H^{3n+2}(H, M) \to k \to \H^{3n+2}(H, M) \to \H^{3n+3}(H, M) \to 0 \]
	is exact. This tells us that, for each \(n\), we have two possibilities: either
	\[\dim \H^{3n+1}(H, M) = \dim \H^{3n+2}(H, M) = \dim \H^{3n+3}(H, M) + 1 \label{mess1} \tag{1}\]
	or
	\[\dim \H^{3n+3}(H, M) = \dim \H^{3n+2}(H, M) = \dim \H^{3n+1}(H, M) + 1. \label{mess2} \tag{2}\]
	We show that the latter always holds. It is worth noting above that if the latter (and larger) case always holds, we have \(\dim \H^n(H, M) = \ceil{\frac{n-1}{3}}\) for any \(n\).

	By Shapiro's Lemma and \cref{InducedkktoPSL}, we see that \(\H^n(H, [k \mid V \mid k]) \cong \H^n(G', k)\) (and similarly for \([k \mid W \mid k]\), as our choice of \(V\) was determined by the particular embedding of \(G\) into \(H\)) and from the long exact sequence in cohomology corresponding to the extension \(0 \to [k \mid W \mid k] \to [V \mid k \mid W \mid k] \to V \to 0\), we see that for \(n \geq 0\), \(\H^{3n}(H, [V \mid k \mid W \mid k]) \cong \H^{3n}(H, [k \mid W \mid k])\) has dimension \(\frac{3n}{3} + 1\). Now, we may construct a quotient (or submodule) \(Y \sim [k \mid \heart(M) \oplus [V \mid k \mid W] \mid k]\) of \(\PC(k)\) and observe that \(\Omega^2 Y \cong Y\), with the heads of both \(Y\) and \(\Omega Y\) being trivial so that \(\H^n(H, Y)\) is 1-dimensional for all \(n\). Finally, from the long exact sequence in cohomology corresponding to the extension \(0 \to [V \mid k \mid W \mid k] \to Y \to M \to 0\), we see that, for \(n \geq 0\), 
	\[\frac{3n}{3} + 1 = \dim \H^{3n}(H, [V \mid k \mid W \mid k]) \leq \dim \H^{3n-1}(H, M) + 1 \leq \frac{3n}{3} + 1.\]
	Thus \(\dim \H^{3n-1}(H, M) = \frac{3n}{3}\) for any \(n \geq 1\). This is only possible if \eqref{mess2} holds for each \(n \geq 0\), and so we are done.
\end{proof}

\section{Cohomology and extensions in \texorpdfstring{\(\SL_2(q)\)}{SL(2,q)}}

Another group whose representation theory is very closely linked to that of \(H \coloneqq \PSL_2(q)\) is its covering group, \(G \coloneqq \SL_2(q)\). For this section, suppose \(q\) is odd so that \(G \ncong \PSL_2(q)\).

For the remainder of this section, let \(\tilde{T}\) and \(\tilde{S}\) denote some fixed preimage under the quotient map of a maximal torus \(T\) or Singer cycle \(S\) of \(H\), respectively. We will also denote the irreducible \(kG\)-modules in the same way as irreducible \(kH\)-modules as the constructions used for \(H\) will still work for \(G\) (see \cite[\textsection 38]{LarryA}).

In the case where \(r\) is odd, there is essentially nothing to do. The Hochschild--Serre spectral sequence corresponding to \(Z(G) \norm G\) collapses to yield \(\H^n(G, V) \cong \H^n(G/Z(G), V^{Z(G)})\) and this can easily be combined with \cref{PSL2MinusOneSummary,PSL2PlusOneSummary} to give all \(\Ext\) groups. One could also quickly compute this directly using Chapter 9 and Theorem 7.1.1 of \cite{SL2Representations} along with \cref{LonelyModule,PSL2OtherExtsCase2}.

\subsection*{\texorpdfstring{\(\bm{r = 2}\)}{r = 2}}

As is often the case, the situation where \(r = 2\) requires the most work since the Sylow \(2\)-subgroups of \(G\) are generalised quaternion (so, not cyclic) and \(G\) has blocks containing both faithful and non-faithful modules. Further, in \(\SL_2(q)\), both \(\tilde{T}\) and \(\tilde{S}\) have even order and so, since the defect groups of blocks of non-maximal defect are the \(2\)-parts of \(\tilde{T}\) or \(\tilde{S}\) which are never trivial, there are no projective irreducible \(kG\)-modules here. However, since the defect groups of blocks of non-maximal defect are contained in \(\tilde{T}\) or \(\tilde{S}\), in particular they are cyclic and so we may still deal with these blocks easily using \cref{LonelyModule}.

Since the Sylow \(2\)-subgroups of \(G\) are generalised quaternion, by \cite[1]{ErdmannQuaternion1}, all simple modules in blocks of maximal defect (in this case, the principal block) are periodic of period dividing \(4\). This simple fact turns out to be a great help, since if one can show that \(\Omega^2 V \ncong V\) for some \(V\) in a block with generalised quaternion defect group, we know that \(V\) has period 4 and so \(\Omega^3 V \cong \PC(V)/V\) and \(\Omega^4 V \cong V\). In particular, we need only calculate \(\Omega^2 V\) (as \(\Omega V \cong \rad \PC(V)\)) in order to determine \(\Ext_G^n(V, W)\) for any irreducible \(W\).

The following is immediate from \cite[Theorem 7.1.1]{SL2Representations} and \cref{LonelyModule}.

\begin{propn} \label{SL2EvenMAndN}
	Suppose that \(Y\) is any of the \(M_i\) or \(N_i\) except \(M_1 = \perm\). Then \(\Ext_G^n(Y, Y) \cong k\) for all \(n\).
\end{propn}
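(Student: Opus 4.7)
The plan is to invoke \cref{LonelyModule} by showing that each such $Y$ is the unique (non-projective) simple module in its block and that this block has cyclic defect group.

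First I would consult the block distribution of irreducible $k\SL_2(q)$-modules in \cite[Theorem 7.1.1]{SL2Representations} to confirm the following structural fact: in characteristic $2$, every irreducible $kG$-module apart from the three modules in the principal block (namely $k$ together with the two nontrivial constituents of $\perm$) lies in a block of its own. Granting this, for $Y \in \{M_i : i \neq 1\} \cup \{N_i\}$, the block $B$ containing $Y$ has $Y$ as its only simple module.

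Next I would check that $B$ has cyclic (in fact nontrivial cyclic) defect group. The Sylow $2$-subgroups of $G$ are generalised quaternion, and the only $2$-block of maximal defect is the principal block, which $Y$ does not lie in. As noted just before the proposition, the defect group of any non-principal $2$-block of $G$ is contained in the $2$-part of a preimage of a maximal torus or Singer cycle, and all such subgroups are cyclic. Moreover these $2$-parts are nontrivial in $\SL_2(q)$ (with $q$ odd), so $Y$ is not projective. This gives precisely the hypotheses of \cref{LonelyModule}: a block $B$ containing a single non-projective simple module $Y$ with cyclic defect group.

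Applying \cref{LonelyModule} then yields $\Ext_G^n(Y,Y) \cong k$ for all $n$, which is the claim. The only real step is the appeal to the block distribution in \cite[Theorem 7.1.1]{SL2Representations}; everything else is immediate from facts already assembled in the preceding paragraphs, so I do not anticipate a significant obstacle — this is essentially a bookkeeping result and the author explicitly describes it as immediate.
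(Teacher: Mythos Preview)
Your proposal is correct and follows exactly the paper's approach: the paper states that the result is immediate from \cite[Theorem 7.1.1]{SL2Representations} and \cref{LonelyModule}, and you have simply spelled out the verification of the hypotheses of \cref{LonelyModule} using the block discussion already given in the preceding paragraphs.
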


The bulk of this section will consist of the proofs of the following two propositions, dealing with the only remaining cases.

\begin{propn} \label{SL2EvenPrincipalBlock1mod4}
	Let \(r = 2\) and \(q \equiv 1 \mod 4\). Then the following hold.
	\[\H^n(G, k) \cong \begin{cases}
		0	&	n \equiv 1, \ 2 \mod 4,\\
		k	&	n \equiv 0, \ 3 \mod 4,
	\end{cases} \qquad
	\H^n(G, V) \cong \begin{cases}
		0	&	n \equiv 0, \ 3 \mod 4,\\
		k	&	n \equiv 1, \ 2 \mod 4,
	\end{cases}\]
	and
	\[\Ext_G^n(V, V) \cong \begin{cases}
		0	&	n \equiv 1, \ 2 \mod 4,\\
		k	&	n \equiv 0, \ 3 \mod 4.
	\end{cases}\]
	Further, \(\Ext_G^n(V, W) = 0\) for all \(n\) and without loss of generality we may swap \(V\) and \(W\) in all of the above to obtain the corresponding results for \(W\).
\end{propn}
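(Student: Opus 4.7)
The plan is to imitate the template of \cref{sec:PSL2Cohomology}: determine the Heller translates \(\Omega_G^n V\), \(\Omega_G^n W\), \(\Omega_G^n k\) in the principal block of \(kG\) and read off the \(\Ext\) groups and cohomology via \cref{OmegaCohomology}. Thanks to the discussion preceding the proposition, every non-projective simple in the principal block is periodic of period dividing \(4\), and once we verify \(\Omega_G^2 V \ncong V\) the period is exactly \(4\), so it suffices to compute \(\Omega_G V\) and \(\Omega_G^2 V\) (and similarly for \(k\) and \(W\)).

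The key input relating \(kG\) to \(kH \coloneqq k \PSL_2(q)\) is the following. Since \(Z \coloneqq Z(G) \cong C_2\) is a normal \(2\)-subgroup of \(G\), every simple \(kG\)-module is inflated from a simple \(kH\)-module; moreover \(kG\) is free as a \(kZ\)-module, so each projective cover \(\PC(S)\) is \(kZ\)-free. A short calculation then gives \((1+z)\PC(S) = \PC(S)^Z\), and both \((1+z)\PC(S)\) and \(\PC(S)/(1+z)\PC(S)\) are isomorphic to the inflated \(kH\)-projective cover \(\PC_H(S)\). This produces a short exact sequence of \(kG\)-modules
\[0 \to \PC_H(S) \to \PC(S) \to \PC_H(S) \to 0,\]
and factoring the map \(\PC(S) \surj S\) through \(\PC_H(S) \surj S\) yields
\[0 \to \PC_H(S) \to \Omega_G S \to \Omega_H S \to 0.\]

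Since \eqref{1mod4Projectives} already gives the precise structure of \(\PC_H(S)\) and \cref{1mod4} and \cref{1mod4Exts} give the heads of the \(\Omega_H^i S\), I would then determine \(\Omega_G S\) by identifying the correct extension class in \(\Ext_G^1(\Omega_H S, \PC_H(S))\) using self-duality of the simples in the principal block, the constraint that \(\Omega_G S\) embeds in \(\PC(S)\) with simple cokernel \(S\), and the fact that \(\head \PC(S) \cong \soc \PC(S) \cong S\) because \(kG\) is symmetric. Iterating with \(\Omega_H^2 S\) in place of \(\Omega_H S\) then yields \(\Omega_G^2 S\), from which periodicity \(4\) can be verified.

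With \(\head \Omega_G^n S\) in hand for \(S \in \{k, V, W\}\) and \(n = 0, 1, 2, 3\), the claimed \(\Ext\) and cohomology computations follow immediately from \cref{OmegaCohomology}, and the symmetry \(V \leftrightarrow W\) in the statement comes from the outer automorphism of \(G\) that swaps these two simples while fixing \(k\). The main obstacle is the gluing step: identifying the precise extension realising \(\Omega_G S\) inside \(\PC(S)\). This requires a careful diagonal-extension analysis in the style of \cref{3mod4structure}, tracking which constituents of \(\head \PC_H(S)\) (sitting as submodule) must be glued diagonally to constituents of \(\Omega_H S\) (sitting as quotient) in order to produce a submodule of the \(kG\)-projective indecomposable rather than a decomposable extension.
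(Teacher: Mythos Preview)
Your approach is genuinely different from the paper's and the initial step is correct and elegant: the short exact sequence \(0 \to \PC_H(S) \to \PC_G(S) \to \PC_H(S) \to 0\) (coming from \(kG\) being free over \(kZ\)) does yield \(0 \to \PC_H(S) \to \Omega_G S \to \Omega_H S \to 0\), and this is a natural way to leverage the \(\PSL_2(q)\) work already done. The paper instead quotes Erdmann's explicit description of the three principal-block PIMs for \(\SL_2(q)\) (see \cref{SL21mod4Projectives}) and reads off \(\head \Omega_G^n S\) directly from that structural data together with composition-factor counts, which is quicker given that input but less self-contained.

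There is, however, a real gap in your iteration step. Your sequence applies when the module being resolved is inflated from \(H\); but \(\Omega_G S\) is \emph{not} inflated (it sits inside the \(kZ\)-free module \(\PC_G(S)\) with dimension strictly larger than \(\dim \PC_H(S)\), so \(Z\) cannot act trivially on it). Hence ``iterating with \(\Omega_H^2 S\) in place of \(\Omega_H S\)'' does not produce a sequence of the same shape for \(\Omega_G^2 S\). You would need either a horseshoe-type argument (which introduces extra terms such as \(\Omega_G(\PC_H(S)) \cong \PC_H(S)\) and \(\Omega_G(\Omega_H S)\), each requiring its own analysis) or some other device to access \(\head \Omega_G^2 S\). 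For \(n=1\) your approach works cleanly: \(\head \Omega_G S\) surjects onto \(\head \Omega_H S\), and the only possible extra summand is \(\head \PC_H(S) = S\), which is ruled out for \(S = k\) since \(\H^1(G,k) = \Hom(G/G',k) = 0\), and similarly for \(S = V\) once one knows \(\Ext_G^1(V,V) = 0\). But pushing to \(n = 2\) without Erdmann's PIM structures requires substantially more than what you have sketched; the paper avoids this by citing those structures outright and then arguing as in \cref{SL21mod4Cohomology,SL21mod4ExtV}.
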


\begin{propn} \label{SL2EvenPrincipalBlock3mod4}
	Let \(r = 2\) and \(q \equiv 3 \mod 4\). Then the following hold.
	\[\H^n(G, k) \cong \begin{cases}
		0	&	n \equiv 1, \ 2 \mod 4,\\
		k	&	n \equiv 0, \ 3 \mod 4,
	\end{cases} \qquad
	\H^n(G, V) \cong \begin{cases}
		0	&	n \equiv 0, \ 3 \mod 4,\\
		k	&	n \equiv 1, \ 2 \mod 4,
	\end{cases}\]
	and
	\[\Ext_G^n(V, V)\cong \begin{cases}
		0	&	n \equiv 1, \ 2 \mod 4,\\
		k	&	n \equiv 0, \ 3 \mod 4,
	\end{cases} \qquad 
	\Ext_G^n(V, W) \cong \begin{cases}
		0	&	n \equiv 0, \ 3 \mod 4,\\
		k	&	n \equiv 1, \ 2 \mod 4,
	\end{cases}\]
	where without loss of generality we may swap \(V\) and \(W\) in all of the above to obtain the corresponding results for \(W\).
\end{propn}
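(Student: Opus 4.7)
The proof follows the template described in the preamble to this section. Since every simple module in the principal block of \(kG\) is periodic of period dividing \(4\) by \cite[1]{ErdmannQuaternion1}, for each \(X \in \{k, V, W\}\) it suffices to compute \(\Omega^2 X\) (noting that \(\Omega X \cong \rad \PC(X)\)) and to verify \(\Omega^2 X \ncong X\). The period will then be exactly \(4\), so \(\Omega^3 X \cong \PC(X)/\soc \PC(X)\) and \(\Omega^4 X \cong X\); every \(\Ext_G^n(X, Y)\) can subsequently be obtained from \cref{OmegaCohomology} as the multiplicity of \(Y\) in \(\head \Omega^n X\).

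The first task is to pin down the Loewy structure of \(\PC(k)\), \(\PC(V)\), \(\PC(W)\). The Cartan matrix is read off from the decomposition matrix of \(\SL_2(q)\) in characteristic \(2\) (see \cite[Chapter 9]{SL2Representations}), and the Ext quiver, which I expect to be a triangle on \(\{k, V, W\}\) with no loops (mirroring the Brauer graph seen for \(\PSL_2(q)\) in this case), can be determined by computing \(\Ext_G^1\) between pairs of simples using induced modules from a Borel subgroup together with Frobenius reciprocity, in the spirit of the \(\PSL_2(q)\) calculations above. Combined with Erdmann's classification of tame blocks with generalised quaternion defect and three simples \cite{ErdmannQuaternion1}, this narrows the Loewy series of each projective cover down uniquely.

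With the projective covers in hand, \(\Omega^2 X\) is computed by taking the projective cover of \(\rad \PC(X)\), namely the direct sum of projective indecomposables indexed by the head constituents of \(\rad \PC(X)\), and identifying its kernel via the diagonal submodules forced by the non-split extensions in \(\rad \PC(X)\), in exactly the manner of \cref{3mod4structure}. A quick dimension check then confirms \(\Omega^2 X \ncong X\), yielding periodicity of period \(4\); reading off multiplicities of each simple in \(\head \Omega^n X\) for \(n = 0, 1, 2, 3\) gives the stated Ext groups. The \(V\)/\(W\) symmetry noted in the statement is automatic from the Dynkin-like symmetry of the Brauer/Ext quiver, which swaps \(V\) and \(W\) while fixing \(k\), and so the same analysis with the roles reversed completes the result.

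The main obstacle is the first step: pinning down the projective covers with enough precision to track the kernels of the covering maps. Once this is in place, the remaining Heller-translate computations and the final extraction of Ext group dimensions are essentially routine and indeed considerably simpler than the \(\PSL_2(q)\) analysis of \cref{3mod4structure}, since here the simples are genuinely periodic and only \(\Omega^2\) needs to be computed.
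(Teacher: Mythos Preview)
Your overall strategy matches the paper's: exploit periodicity dividing \(4\), compute \(\Omega^2 X\), check it is not simple so the period is exactly \(4\), then read off heads of \(\Omega^n X\) for \(n=0,1,2,3\).

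There are, however, two places where the paper is considerably more economical than your plan. First, the Loewy structures of \(\PC(k)\), \(\PC(V)\), \(\PC(W)\) are simply quoted from Erdmann \cite[Appendix VI, (7.9)(c)]{ErdmannQuaternion2} rather than being rederived from the Cartan matrix and Ext quiver; your proposed route via induced modules and Frobenius reciprocity would work in principle but is unnecessary. Second, and more interestingly, the paper never tracks diagonal submodules inside \(\PC(\Omega X)\) in the manner of \cref{3mod4structure}. Instead it argues purely by composition-factor bookkeeping: from the known composition factors of \(\PC(X)\) one reads off those of \(\Omega X\) and hence of \(\Omega^2 X\); since \(\Omega^2 X\) is not simple the period is \(4\), so the composition factors of \(\Omega^3 X \cong \PC(X)/X\) are also known; adding those of \(\Omega^2 X\) and \(\Omega^3 X\) gives the composition factors of \(\PC(\Omega^2 X)\), and writing \(\PC(\Omega^2 X) \cong \PC(k)^a \oplus \PC(V)^b \oplus \PC(W)^c\) yields three linear equations in \(a,b,c\) with a unique solution. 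This determines \(\head \Omega^2 X\) without any structural analysis of kernels. Your approach would certainly succeed, but this composition-factor trick is both shorter and avoids the delicate tracking of non-split extensions you anticipate as the main obstacle.
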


To prove the above, we use knowledge of the structure of the projective \(kG\)-modules lying in the principal block. These are given by Erdmann (indicating the extensions used) in \cite[Appendix]{ErdmannQuaternion2}, where the association of these projective covers with \(\SL_2(q)\) is given in (7.9)(a) and (c) of the same article. We summarise the information we require via the two below results.

\begin{propncite}[{\cite[Appendix III]{ErdmannQuaternion2}}] \label{SL21mod4Projectives}
	Let \(r = 2\) and \(q \equiv 1 \mod 4\). Then the projective indecomposable modules in the principal block of \(kG\) are as below, where \(Y_1\) and \(Y_2\) are as in the proof of \cref{1mod4} and \(\PC(k)\) is given only as radical factors.
	\[
	\begin{array}{ccc}
			&	V			&			\\
			&	k			&	 		\\
			&				&	W 		\\
			&				&	k 		\\
			&				&	V		\\
		V	&	\bigoplus	&	k		\\
			&				& 	\vdots 	\\
			&				&	V		\\
			&				&	k		\\
			& 				&	W 		\\
			&	k			&	 		\\
			&	V 			&
	\end{array}
	\qquad \qquad \qquad
	\begin{array}{ccc}
				&	W			&			\\
				&	k			&	 		\\
		V		&				&	 		\\
		k		&				&	 		\\
		W		&				&			\\
		k		&	\bigoplus	&	W		\\
		\vdots	&				& 		 	\\
		W		&				&			\\
		k		&				&			\\
		V		& 				&	 		\\
				&	k			&	 		\\
				&	W 			&
	\end{array}
	\qquad \qquad \qquad
	\begin{array}{ccc}
				&	k 			& 			\\
		Y_1		&	\bigoplus	&	Y_2		\\
		k		&	\bigoplus	&	k		\\
		Y_1		&	\bigoplus	&	Y_2		\\
				&	k 			&			
	\end{array}
\]
The composition factors of the above are as follows, where \(\{a^n\}\) denotes the \emph{multiset} containing \(a\) with multiplicity \(n\). \(\PC(k)\) has composition factors \(\{k^{4m+2}, V^{4m}, W^{4m}\}\), \(\PC(V)\) has composition factors \(\{k^{4m}, V^{2m+2}, W^{2m}\}\) and \(\PC(W)\) has composition factors \(\{k^{4m}, V^{2m}, W^{2m+2}\}\).
\end{propncite}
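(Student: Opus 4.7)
My plan is to read the Loewy structure of the three projective indecomposable modules directly off Erdmann's classification of basic algebras with generalised quaternion defect group and three simple modules, as catalogued in \cite[Appendix III]{ErdmannQuaternion2}; the identification of the relevant family with the principal block of \(kG\) for \(G = \SL_2(q)\) and \(q \equiv 1 \mod 4\), as well as the pinning down of the parameter \(m\), are both achieved via (7.9)(a,c) of the same paper. This yields the displayed diagrams essentially verbatim, with the only remaining work being the composition factor counts.

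With the structures in hand, the composition factor multisets follow by direct enumeration from the diagrams. For \(\PC(V)\), the head, socle and the two adjacent \(k\)'s together with the short arm of the heart contribute \(V^3 k^2\); the long arm of the heart is uniserial with alternating pattern \(W, k, V, k, V, k, \ldots, V, k, W\) wrapping around the unique vertex of the Brauer graph at which \(V\) and \(W\) are both incident, and one counts its contribution to be \(\{k^{4m-2}, V^{2m-1}, W^{2m}\}\); summing gives \(\{k^{4m}, V^{2m+2}, W^{2m}\}\). \(\PC(W)\) is then handled by the symmetry \(V \leftrightarrow W\). For \(\PC(k)\), the heart decomposes as two copies of \(Y_1 \oplus k \oplus Y_2\), where \(Y_1, Y_2\) are precisely the uniserial modules introduced in the \(\PSL_2(q)\) analysis in \cref{3mod4structure} and the surrounding discussion; the composition factors of each \(Y_i\) are already known from that analysis, and aggregating them yields \(\{k^{4m+2}, V^{4m}, W^{4m}\}\).

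The main obstacle I expect is reconciling Erdmann's algebraic parameters with the Brauer-graph parameter \(m\) used in the previous section, and in particular verifying that the same \(m\) governs all three projectives simultaneously. The cleanest sanity check is the regular-representation identity \(|G| = \sum_X (\dim X)(\dim \PC(X))\) summed over the simple modules in the principal block, which together with \(|G|_2 = 2(q-1)_2\) and the known dimensions of \(V\), \(W\) forces a unique value of \(m\) and matches the stated composition factor counts. A secondary check is that each column of the Cartan matrix read off from the composition factor multisets must be symmetric (the Cartan matrix of a symmetric algebra is symmetric), which is visibly the case for the displayed counts.
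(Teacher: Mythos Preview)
The paper does not prove this proposition at all: it is stated as a citation from Erdmann (note the \texttt{propncite} environment) and no proof is given. Your approach---extract the Loewy structures from Erdmann's Appendix III, identify the relevant family via (7.9)(a,c), and then count composition factors from the diagrams---is exactly how one verifies such a citation, so in that sense your proposal is appropriate and goes beyond what the paper itself does.

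Two small corrections. First, your cross-reference for \(Y_1, Y_2\) is wrong: the statement says they are ``as in the proof of \cref{1mod4}'', i.e.\ the \(q \equiv 1 \pmod 4\) analysis, whereas \cref{3mod4structure} concerns \(q \equiv 3 \pmod 4\) and uses the different notation \(Y_V, Y_W\). The modules \(Y_1, Y_2\) you want are the two uniserial arms of the heart of \(\PC(k)\) for \(\PSL_2(q)\) shown in \eqref{1mod4Projectives}. Second, your proposed sanity check \(\abs{G} = \sum_X (\dim X)(\dim \PC(X))\) is not correct when restricted to a single block; the identity holds only when summed over all simple \(kG\)-modules. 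If you want a block-local check, use instead that the dimension of the principal block idempotent times \(kG\) equals \(\sum_{X \in B_0} (\dim X)(\dim \PC(X))\), or simply check that the Cartan matrix you obtain is symmetric with determinant equal to the order of the defect group, as you already suggest.
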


\begin{propncite}[{\cite[Appendix VI]{ErdmannQuaternion2}}] \label{SL23mod4Projectives}
	Let \(r = 2\) and \(q \equiv 3 \mod 4\). Then the projective indecomposable modules in the principal block of \(kG\) have radical factors as below, where \(Y_V\) and \(Y_W\) are as seen after the statement of \cref{3mod4} and the factors \(k\), \(V\) and \(k\) (respectively \(k\), \(W\), \(k\)) all appear alongside factors of the upper \(Y_V\) (respectively \(Y_W\)).
	\[
	\begin{array}{ccc}
			&	V			&			\\
		k	&	\bigoplus	&	Y_V		\\
		V	&	\bigoplus	&	V		\\
		k	&	\bigoplus	&	Y_V		\\
			&	V			&					
	\end{array}
	\qquad \qquad \qquad
	\begin{array}{ccc}
			&	W			&			\\
		Y_W	&	\bigoplus	&	k		\\
		W	&	\bigoplus	&	W		\\
		Y_W	&	\bigoplus	&	k		\\
			&	W			&					
	\end{array}
	\qquad \qquad \qquad
	\begin{array}{ccc}
		&	k 			& 		\\
	V 	& 	\bigoplus 	&	W 	\\
	k	& 	\bigoplus	&	k	\\
	V	&	\bigoplus	&	W	\\
		&	k			&
	\end{array}
\]
The composition factors of the above are as follows, where \(\{a^n\}\) denotes the \emph{multiset} containing \(a\) with multiplicity \(n\). \(\PC(k)\) has composition factors \(\{k^{2}, V^{4}, W^{4}\}\), \(\PC(V)\) has composition factors \(\{k^{2}, V^{2m+2}, W^{2m}\}\) and \(\PC(W)\) has composition factors \(\{k^{2}, V^{2m}, W^{2m+2}\}\).
\end{propncite}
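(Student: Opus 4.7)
The overall strategy is to leverage the central extension $1 \to Z \to G \to H \to 1$ where $H \coloneqq \PSL_2(q)$ and $Z = \{\pm I\}$ has order $2$. Since $\Char k = 2$, the central involution acts trivially on every simple $kG$-module, so the simples of $kG$ are exactly the inflations of $k$, $V$, $W$ from $kH$. However $kG$ is not isomorphic to $kH \otimes kZ$ (the group algebra $kZ$ has a nontrivial radical in characteristic $2$), so the projective covers in $kG$ are strictly larger than those inflated from $kH$, and the point of the proposition is to describe precisely by how much.

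The Sylow $2$-subgroups of $G$ are generalised quaternion of order $2 \cdot (q+1)_2$, so Erdmann's classification of blocks of tame representation type with generalised quaternion defect groups applies to $B_0(kG)$. Among such blocks with three simple modules there is a short list of Morita equivalence classes, and for each such class the radical series of every projective indecomposable is known explicitly in terms of one parameter. The plan is then to identify the correct Morita class and determine that parameter.

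To do this I would compute the $2$-decomposition matrix of $G$ from the ordinary character table of $\SL_2(q)$ together with Burkhardt's analysis of cross-characteristic decomposition \cite{BurkhardtPSLDecomposition}, and extract the Cartan matrix $C = D^T D$. The column sums of $C$ give the composition factor multiplicities in each PIM and these pin down the parameter in Erdmann's list as a function of $(q+1)_2$; this parameter is precisely the integer $m$ already appearing in the Brauer graph of $kH$. With the Morita class fixed, the radical series of $\PC(k)$, $\PC(V)$ and $\PC(W)$ may be read off from Erdmann's explicit description.

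The principal obstacle is the dictionary between Erdmann's presentation of the basic algebra (in terms of a quiver with relations) and the explicit radical-layer description given in the statement, in particular verifying the direct-sum splittings appearing in each layer. The symmetry between $V$ and $W$ (interchanged by a suitable outer automorphism of $G$) reduces the work, since the descriptions of $\PC(V)$ and $\PC(W)$ must differ only by this swap, and a partial consistency check is provided by comparing the ranks of the resulting PIMs against $|G|/|Z(G)|$ times the relevant simple dimensions.
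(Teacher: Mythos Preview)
This proposition is not proved in the paper at all: it is stated as a cited result, with the structure of the PIMs taken directly from Erdmann's appendix and the identification of the relevant Morita class for $\SL_2(q)$ taken from (7.9)(a),(c) of the same article. So there is no paper-proof to compare against; the paper simply imports the statement wholesale.

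Your proposal is a correct sketch of how one would reconstruct that citation. The key ingredients you name---generalised quaternion Sylow $2$-subgroup, Erdmann's finite list of Morita classes for such principal blocks with three simples, and the Cartan matrix (obtained from the $2$-decomposition matrix) to single out the class and fix the parameter $m$---are exactly the inputs Erdmann uses, and your observation that the simples of $kG$ are the inflated simples of $kH$ is the right starting point. Compared with the paper, you are doing by hand the bookkeeping that the paper outsources to Erdmann's (7.9); this buys you independence from that specific reference at the cost of redoing a known computation.

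Two small remarks. First, your proposed consistency check ``ranks of the PIMs against $|G|/|Z(G)|$ times the simple dimensions'' is not quite the right identity; the relevant check is $\sum_S (\dim S)(\dim \PC(S)) = |G|$ (restricted to the principal block this becomes the block dimension), or equivalently that the Cartan entries reproduce $\dim \PC(S) = \sum_T c_{ST}\dim T$. Second, when you pass from Erdmann's quiver-with-relations to the layered pictures, the genuine content is in the placement of the uniserial pieces $Y_V$, $Y_W$ relative to the short $[k \mid V \mid k]$ and $[k \mid W \mid k]$ columns; make sure your dictionary records which radical layer each composition factor of $Y_V$ lands in, since this is what the subsequent Heller-translate computations in the paper actually use.
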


We will prove \cref{SL2EvenPrincipalBlock1mod4} by investigating the structure of \(\Omega^n k\) and \(\Omega^n V\) for \(n \leq 4\), and as such the result is best proven in two parts.

\begin{lemma} \label{SL21mod4Cohomology}
	Let \(q \equiv 1 \mod 4\). Then the cohomology of \(k\), \(V\) and \(W\) is as in \cref{SL2EvenPrincipalBlock1mod4}.
\end{lemma}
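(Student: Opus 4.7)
The plan is to exploit $\Omega$-periodicity. Since the Sylow $2$-subgroups of $G$ are generalised quaternion, every simple module in the principal $2$-block of $kG$ is periodic of $\Omega$-period dividing $4$ by \cite[1]{ErdmannQuaternion1}. It therefore suffices to compute the Heller translates $\Omega^n k$ and $\Omega^n V$ for $n \in \{1, 2, 3\}$ and to read off the composition of their heads, using the identities
\[\dim \H^n(G, X) = \dim \Hom_G(\Omega^n k, X), \qquad \dim \Ext_G^n(V, X) = \dim \Hom_G(\Omega^n V, X)\]
from \cref{OmegaCohomology} for each irreducible $X \in \{k, V, W\}$. By the symmetric roles of $V$ and $W$ in the principal block, the $W$-statements follow from the $V$-statements by interchanging the two modules throughout.

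Starting from $\Omega k \cong \rad \PC(k)$, whose radical layers are read directly off \cref{SL21mod4Projectives}, I would take its projective cover $\PC(Y_1) \oplus \PC(Y_2)$ and identify $\Omega^2 k$ as the kernel; iterating once more gives $\Omega^3 k$, after which $\Omega^4 k \cong k$ follows from periodicity (or a composition-factor check). The heads of $\Omega^n k$ for $n = 0, 1, 2, 3$ immediately yield the claimed $\H^n(G, k)$, $\H^n(G, V)$ and $\H^n(G, W)$, and an entirely analogous iteration starting from $V$ yields $\Omega^n V$ and hence the claimed $\Ext$ groups of \cref{SL2EvenPrincipalBlock1mod4}.

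The main obstacle is tracking diagonal submodules at each stage, exactly as in the analysis behind \cref{3mod4structure}: when $\Omega^n X$ has a non-semisimple second radical layer, the projective covering map onto it is not determined up to isomorphism by the shape of the target alone, and one has to argue that the covering is forced to factor through certain diagonal submodules of heart layers of adjacent projective summands. The vanishing of the self-extensions $\Ext_G^1(V, V) = \Ext_G^1(V, W) = 0$ (extracted from the Ext quiver used in \cref{1mod4Multiplicities} and the shape of $\PC(V)$, $\PC(W)$ in \cref{SL21mod4Projectives}) forces every candidate split of a trivial head-summand off $\Omega^{n+1}$ to be ruled out, pinning down each successive Heller translate. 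Once the four translates of $k$ and the four translates of $V$ are in hand, reading off the multiplicities of $k$, $V$, $W$ in their heads gives the stated cohomology.
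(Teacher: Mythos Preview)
Your approach is correct and essentially matches the paper's: exploit period dividing $4$, compute the first few Heller translates of $k$, and read off heads. Two remarks, however.

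First, this lemma concerns only $\H^n(G,k)$, $\H^n(G,V)$ and $\H^n(G,W)$, so only the translates $\Omega^n k$ are needed; your analysis of $\Omega^n V$ is the content of the \emph{next} lemma (\cref{SL21mod4ExtV}), not this one.

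Second, the ``main obstacle'' you flag---tracking diagonal submodules through successive projective covers---is entirely bypassed by the paper. Instead of chasing extensions, the paper argues by composition-factor count: from $\PC(\Omega k)\cong\PC(V)\oplus\PC(W)$ and the known Cartan data in \cref{SL21mod4Projectives}, one sees that $\Omega^2 k$ has composition factors $\{k,V^2,W^2\}$. Any submodule of $\PC(V)\oplus\PC(W)$ with exactly these factors is forced to sit inside $\soc^3(\PC(V)\oplus\PC(W))$ and hence must have shape $[V\oplus W\mid k\mid V\oplus W]$. Since $\Omega^2 k$ is then visibly not simple, the period is exactly $4$ and $\Omega^3 k\cong\PC(k)/k$ comes for free. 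This is considerably lighter than the diagonal-tracking machinery of \cref{3mod4structure}; your plan would work, but you would be doing more than necessary.
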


\begin{proof}
	We proceed as usual. The shape of \(\Omega k\) is clear from the structure of \(\PC(k)\). Looking at the composition factors of \(\Omega k\), we can see that the composition factors of \(\Omega^2 k\) must be two copies of \(V\), two copies of \(W\) and a single copy of \(k\). Note that \(\Omega^2 k\) must be a submodule of \(\PC(V) \oplus \PC(W)\) with these composition factors, and the only possible way of taking such a submodule is of shape \([V \oplus W \mid k \mid V \oplus W]\) since \(\soc^3(\PC(V) \oplus \PC(W))\) is of shape \([V \oplus W \oplus V \oplus W \mid k \oplus k \mid V \oplus W]\). Since \(\Omega^2 k\) is not simple and \(k\) has period dividing 4, \(k\) must have period 4 and so \(\Omega^3 k \cong \Omega^{-1} k \cong \PC(k)/k\). The result then follows from the fact that \(\head \Omega^0 k \cong k\), \(\head \Omega k \cong V \oplus W\), \(\head \Omega^2 k \cong V \oplus W\) and \(\head \Omega^3 k \cong k\).
\end{proof}

\begin{lemma} \label{SL21mod4ExtV}
	Let \(q \equiv 1 \mod 4\). Then the \(\Ext\) groups not described in \cref{SL21mod4Cohomology} are as in \cref{SL2EvenPrincipalBlock1mod4}.
\end{lemma}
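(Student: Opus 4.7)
The plan is to compute the Heller translates $\Omega^i V$ for $i = 1, 2, 3$ and then exploit the fact that, since the Sylow $2$-subgroups of $G$ are generalised quaternion, $V$ is periodic of period dividing $4$ (as noted before \cref{SL2EvenMAndN}). Once the heads of $\Omega V$, $\Omega^2 V$, $\Omega^3 V$ are known and $\Omega^4 V \cong V$ is established, each $\dim \Ext_G^n(V, X)$ for an irreducible $X$ in the principal block follows from \cref{OmegaCohomology} as the multiplicity of $X$ in $\head \Omega^n V$, and the pattern extends periodically.

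First, $\Omega V \cong \rad \PC(V)$, whose head is the first radical layer of $\PC(V)$. From \cref{SL21mod4Projectives} this layer is a single $k$, so $\head \Omega V \cong k$, yielding $\Ext_G^1(V, V) = \Ext_G^1(V, W) = 0$; the value $\Ext_G^1(V, k) \cong k$ is consistent with \cref{SL21mod4Cohomology}. For the second step, $\PC(\Omega V) = \PC(k)$, so $\Omega^2 V$ is a submodule of $\PC(k)$ whose composition factors, by subtraction, form the multiset $\{k^2, V^{2m-1}, W^{2m}\}$ (using the factor counts recorded in \cref{SL21mod4Projectives}). I would then argue, using indecomposability (\cref{HellerIndecomposable}), self-duality of the principal block simples, the structural constraint from \cref{SL21mod4Projectives} on how the $Y_1$, $Y_2$ pieces of $\PC(k)$ can host $V$'s and $W$'s, and the known fact that $k$ appears with multiplicity exactly one in $\head \Omega^2 V$ (from $\H^2(G, V) \cong k$ in \cref{SL21mod4Cohomology}), that $\head \Omega^2 V \cong k$ only. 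This gives $\Ext_G^2(V, V) = \Ext_G^2(V, W) = 0$.

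Third, comparing composition factors, $\dim \Omega^2 V > \dim V$, so $\Omega^2 V \ncong V$ and $V$ has period exactly $4$. Because $kG$ is self-injective, $\Omega^{-1} X \cong \PC(X)/\soc \PC(X)$ for any $X$, so $\Omega^3 V \cong \Omega^{-1} V \cong \PC(V)/\soc \PC(V)$, which inherits head $V$ from $\PC(V)$. This produces $\Ext_G^3(V, V) \cong k$ and $\Ext_G^3(V, W) = \Ext_G^3(V, k) = 0$, matching $\H^3(G, V) = 0$ in \cref{SL21mod4Cohomology}. The periodicity relation $\Omega^4 V \cong V$ then delivers the claimed mod-$4$ table for all $n$. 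Since the roles of $V$ and $W$ are entirely symmetric in \cref{SL21mod4Projectives}, running the same computation for $W$ (or applying an outer automorphism of $G$ that swaps $V$ and $W$) yields the corresponding statements for $W$.

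The main obstacle is the second step: pinning down that $\head \Omega^2 V$ is $k$ alone rather than carrying a stray copy of $V$ or $W$. The submodule analysis inside $\PC(k)$ with the intricate $Y_1$, $Y_2$ summands in its heart is the delicate part, but the combination of self-duality, indecomposability, the precise radical structure from \cref{SL21mod4Projectives}, and the known $\dim \H^2(G, V) = 1$ should be sufficient to force the head and thereby complete the argument.
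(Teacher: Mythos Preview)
Your overall strategy is the same as the paper's: read off $\head\Omega V\cong k$ from $\PC(V)$, pin down $\head\Omega^2 V$, use periodicity to get $\Omega^3 V\cong\PC(V)/V$, and then read off the $\Ext$ groups from the heads. The only substantive difference is how you handle the second syzygy. The paper does not count composition factors or import $\H^2(G,V)$; instead it identifies the \emph{shape} of $\Omega V$ explicitly as $[k\mid V\oplus Y_2\mid k\mid Y_2]$ (here $Y_1$, $Y_2$ are the two uniserial pieces in $\heart\PC(k)$), and then reads off directly that the kernel of $\PC(k)\surj\Omega V$ has shape $[\rad Y_1\mid k\mid Y_1\mid k]$, whose head is visibly $k$. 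This is shorter and avoids the point you flag as the ``main obstacle''.

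Two small cautions about your route. First, $\H^2(G,V)=\Ext_G^2(k,V)$ measures the multiplicity of $V$ in $\head\Omega^2 k$, not of $k$ in $\head\Omega^2 V$; you are tacitly using $V\cong V^*$ to pass between these, which is fine here but should be said. Second, even granting that $k$ occurs once in $\head\Omega^2 V$, your exclusion of $V$ and $W$ from the head remains a promissory note (``should be sufficient''). The paper's direct shape computation sidesteps this entirely and is what I would recommend: once you know $\Omega V$ sits in $\PC(k)$ as the submodule $[k\mid V\oplus Y_2\mid k\mid Y_2]$, the complement inside $\rad\PC(k)$ is forced and its head is immediate.
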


\begin{proof}
	The shape of \(\Omega V\) is clear from the shape of \(\PC(V)\). We now note that \(\Omega V\) must be covered by \(\PC(k)\), and is in fact of shape \([k \mid V \oplus Y_2 \mid k \mid Y_2]\). The kernel of this covering thus has shape \(\Omega^2 V \sim [\rad Y_1 \mid k \mid Y_1 \mid k] \sim [k \mid Y_1 \mid k \mid Y_1 / \soc Y_1]\) and is again covered by \(\PC(k)\). Since \(\Omega^2 V\) is not simple and we know that \(V\) has period dividing 4, we see that \(V\) has period 4 and so \(\Omega^3 V \sim \PC(V)/V\) and \(\Omega^4 V \cong V\). The result then follows from the fact that \(\head \Omega^0 V \cong V\), \(\head \Omega V \cong k\), \(\head \Omega^2 V \cong k\) and \(\head \Omega^3 V \cong V\).
\end{proof}

As with the previous case, \cref{SL2EvenPrincipalBlock3mod4} is best proven in two parts.

\begin{lemma} \label{SL23mod4Cohomology}
	Let \(q \equiv 3 \mod 4\). Then the cohomology of \(k\), \(V\) and \(W\) is as in \cref{SL2EvenPrincipalBlock3mod4}.
\end{lemma}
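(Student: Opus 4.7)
The plan is to mirror the argument given for \cref{SL21mod4Cohomology}: compute the Heller translates \(\Omega^i k\) for \(i = 0, 1, 2, 3\) using the structure of the projective indecomposables listed in \cref{SL23mod4Projectives}, and then read off \(\dim \H^n(G, X)\) as the multiplicity of \(X\) in \(\head \Omega^n k\) via \cref{OmegaCohomology}. Throughout I will exploit the fact that every simple module in the principal block is periodic of period dividing \(4\), which holds because the Sylow \(2\)-subgroups of \(G\) are generalised quaternion \cite{ErdmannQuaternion1}.

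The shape of \(\Omega k = \rad \PC(k)\) is immediate from \cref{SL23mod4Projectives}, giving \(\head \Omega k \cong V \oplus W\). Provided the period of \(k\) turns out to be exactly \(4\), the general identity \(\Omega^3 k \cong \Omega^{-1} k \cong \PC(k)/\soc \PC(k)\) then yields \(\head \Omega^3 k \cong k\) for free. Consequently the substantive task is to analyse \(\Omega^2 k\) and, at the same time, rule out periods \(1\) and \(2\).

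Since \(\head \Omega k \cong V \oplus W\), we have \(\Omega^2 k \hookrightarrow \PC(V) \oplus \PC(W)\), and from \cref{SL23mod4Projectives} each of \(\PC(V)\), \(\PC(W)\) has simple socle (namely \(V\) and \(W\) respectively), so \(\soc \Omega^2 k \subseteq V \oplus W\). The key step is a composition-factor count: subtracting the factors of \(\Omega k\) from those of \(\PC(V) \oplus \PC(W)\) shows that \(\Omega^2 k\) has three trivial constituents, whereas each of \(\PC(V)\) and \(\PC(W)\) has only two. Since \(kG\) is self-injective, a submodule of \(\PC(V) \oplus \PC(W)\) with simple socle \(V\) (respectively \(W\)) would embed into \(\PC(V)\) (respectively \(\PC(W)\)), which is impossible by this count. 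Hence \(\soc \Omega^2 k \cong V \oplus W\), which forces \(\Omega^2 k \ncong k\) and so the period is exactly \(4\). Periodicity together with \(k^* \cong k\) implies \(\Omega^2 k\) is self-dual, and so \(\head \Omega^2 k \cong V \oplus W\) as well. Reading off \(\head \Omega^i k\) for \(i = 0, 1, 2, 3\) now gives the claimed values; the hardest point to pin down carefully is the socle argument for \(\Omega^2 k\), but this is uniform in \(m\) precisely because the extra trivial constituent always obstructs embedding into a single PIM.
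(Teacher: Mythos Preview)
Your overall strategy is sound and in fact cleaner than the paper's: the paper determines \(\head\Omega^2 k\) by first computing the composition factors of \(\Omega^2 k\) and of \(\Omega^3 k\cong \PC(k)/k\), adding them to obtain the composition factors of \(\PC(\Omega^2 k)\), and then solving a \(3\times 3\) linear system to identify \(\PC(\Omega^2 k)\cong \PC(V)\oplus\PC(W)\). Your route via \(\soc\Omega^2 k\) and the self-duality \((\Omega^2 k)^*\cong\Omega^{-2}k\cong\Omega^2 k\) avoids that calculation entirely, and the duality step is valid once the period is known to be~\(4\).

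However, your composition-factor count is wrong. The multiplicities quoted in \cref{SL23mod4Projectives} for \(\PC(k)\) are a typo: reading the diagram gives \(\{k^4,V^2,W^2\}\), not \(\{k^2,V^4,W^4\}\) (and the paper's own proof uses the former). With the correct numbers, \(\Omega k\) has factors \(\{k^3,V^2,W^2\}\) and \(\PC(V)\oplus\PC(W)\) has \(\{k^4,V^{4m+2},W^{4m+2}\}\), so \(\Omega^2 k\) has exactly \emph{one} trivial constituent, not three. Your obstruction ``three trivials versus two in each PIM'' therefore evaporates.

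The fix is immediate and keeps your argument intact: count \(W\)'s (or \(V\)'s) instead of \(k\)'s. The module \(\Omega^2 k\) has \(4m\) copies of \(W\), whereas \(\PC(V)\) has only \(2m\); since \(m\geq 1\) this rules out \(\soc\Omega^2 k\cong V\), and the symmetric count rules out \(\soc\Omega^2 k\cong W\). Hence \(\soc\Omega^2 k\cong V\oplus W\), the period is \(4\), and your self-duality step then gives \(\head\Omega^2 k\cong V\oplus W\) as required.
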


\begin{proof}
	We proceed as usual. The shape of \(\Omega k\) is clear from the structure of \(\PC(k)\). Next, we know that the composition factors of \(\Omega^2 k\) are those of \(\PC(\Omega k)\) not lying in \(\Omega k\), namely \(\{k^{4-3}, V^{4m+2-2}, W^{4m+2-2}\} = \{k^1, V^{4m}, W^{4m}\}\). In particular, \(\Omega^2 k\) is not simple, and since \(V\) has period dividing 4 its period must therefore be 4. This tells us that \(\Omega^3 V \cong \Omega^{-1} V \cong \PC(k)/k\) with composition factors \(\{k^3, V^2, W^2\}\). But we also know that the composition factors of \(\Omega^3 k\) are those of \(\PC(\Omega^2 k)\) which are not in \(\Omega^2 k\). From this, one deduces that the composition factors of \(\PC(\Omega^2 V)\) are the union of the composition factors for \(\Omega^2 k\) and \(\Omega^3 k\), namely \(\{k^4, V^{4m+2}, W^{4m+2}\}\). If we suppose that \(\PC(\Omega^2 k) \cong \PC(k)^{\oplus a} \oplus \PC(V)^{\oplus b} \oplus \PC(W)^{\oplus c}\), then we get the following set of simultaneous equations, which are easily solved to find that \(b = c = 1\) and \(a = 0\).
	\begin{align*}
		4a + 2b + 2c &= 4,\\
		2a + (2m+2)b + 2mc &= 4m+2,\\
		2a + 2mb + (2m+2)c &= 4m+2.
	\end{align*}
	This tells us that \(\PC(\Omega^2 k) \cong \PC(V) \oplus \PC(W)\) and so \(\head \Omega^2 k \cong V \oplus W\). The result then follows from the fact that \(\head \Omega^0 k \cong k\), \(\head \Omega^k \cong V \oplus W\), \(\head \Omega^2 k \cong V \oplus W\) and \(\head \Omega^3 k \cong k\).
\end{proof}

\begin{lemma} \label{SL23mod4ExtV}
	Let \(q \equiv 3 \mod 4\). Then the \(\Ext\) groups not described in \cref{SL23mod4Cohomology} are as in \cref{SL2EvenPrincipalBlock3mod4}.
\end{lemma}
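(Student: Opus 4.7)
The approach is to mimic the proof of Lemma \ref{SL21mod4ExtV}: compute $\Omega^n V$ for $n \leq 3$ using Proposition \ref{SL23mod4Projectives}, then invoke the fact that $V$ is periodic of period dividing $4$ (noted at the start of the subsection, since the Sylow $2$-subgroups of $G$ are generalised quaternion). By symmetry between $V$ and $W$ in Proposition \ref{SL23mod4Projectives}, it suffices to compute $\Ext_G^n(V, Y)$ for $Y \in \{V, W\}$ and apply Lemma \ref{OmegaCohomology} to read off dimensions from $\head \Omega^n V$.

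Reading off the top of $\PC(V)$, we have $\Omega V \cong \rad \PC(V)$ with $\head \Omega V \cong k \oplus \head Y_V \cong k \oplus W$, giving $\Ext_G^1(V,V) = 0$ and $\Ext_G^1(V,W) \cong k$. Counting composition factors shows $\Omega V$ has factors $\{k^2, V^{2m+1}, W^{2m}\}$, so $\PC(\Omega V) \cong \PC(k) \oplus \PC(W)$, and subtracting composition factor multisets yields $\Omega^2 V$ with factors $\{k^2, V^3, W^6\}$. In particular $\Omega^2 V$ is non-simple, so $V$ has period exactly $4$; hence $\Omega^3 V \cong \Omega^{-1} V \cong \PC(V)/\soc \PC(V)$, which has head $V$, giving $\Ext_G^3(V,V) \cong k$ and $\Ext_G^3(V,W) = 0$.

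It remains to identify $\head \Omega^2 V$. Since $\Omega^3 V$ is the kernel of $\PC(\Omega^2 V) \twoheadrightarrow \Omega^2 V$, the composition factors of $\PC(\Omega^2 V)$ are the disjoint union of those of $\Omega^2 V$ and $\Omega^3 V$, namely $\{k^4, V^{2m+4}, W^{2m+6}\}$. Writing $\PC(\Omega^2 V) \cong \PC(k)^a \oplus \PC(V)^b \oplus \PC(W)^c$ and substituting the composition factor counts from Proposition \ref{SL23mod4Projectives} gives the linear system
\[
a + b + c = 2, \qquad 4a + (2m+2)b + 2mc = 2m+4, \qquad 4a + 2mb + (2m+2)c = 2m+6,
\]
whose unique non-negative integer solution is $a = c = 1$, $b = 0$. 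Thus $\head \Omega^2 V \cong k \oplus W$, yielding $\Ext_G^2(V,V) = 0$ and $\Ext_G^2(V,W) \cong k$. The periodicity $\Omega^4 V \cong V$ extends these four computations to all $n$, and swapping $V$ with $W$ gives the corresponding statements for $W$.

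The main subtlety is that $\Omega^2 V$ is never constructed explicitly (unlike the $q \equiv 1 \mod 4$ case, where the structure is easy to read off from $\PC(\Omega V)$). The composition-factor count is what forces $\PC(\Omega^2 V)$ uniquely and thereby pins down its head.
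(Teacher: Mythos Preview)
Your approach is essentially identical to the paper's: compute the composition factors of $\Omega^2 V$ and $\Omega^3 V$, add them to get those of $\PC(\Omega^2 V)$, and solve a linear system in $a,b,c$ to identify $\head \Omega^2 V \cong k \oplus W$.

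The one discrepancy is numerical. You take the composition factors of $\PC(k)$ to be $\{k^2, V^4, W^4\}$ as stated in \cref{SL23mod4Projectives}, but that line is a typo in the paper: the displayed shape of $\PC(k)$ visibly has factors $\{k^4, V^2, W^2\}$, and indeed the paper's own proofs of \cref{SL23mod4Cohomology,SL23mod4ExtV} silently use the correct values. With the corrected $\PC(k)$ one obtains $\Omega^2 V$ with factors $\{k^4, V^1, W^4\}$, $\PC(\Omega^2 V)$ with factors $\{k^6, V^{2m+2}, W^{2m+4}\}$, and the system
\[
4a + 2b + 2c = 6, \qquad 2a + (2m+2)b + 2mc = 2m+2, \qquad 2a + 2mb + (2m+2)c = 2m+4,
\]
again with unique solution $a = c = 1$, $b = 0$. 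Because you used the erroneous $\PC(k)$ factors consistently both in computing $\Omega^2 V$ and in setting up the system, the error cancels and your conclusion $\head \Omega^2 V \cong k \oplus W$ is correct. Nothing in the argument needs to change beyond fixing these constants.
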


\begin{proof}
	We proceed as in \cref{SL23mod4Cohomology}. The shape of \(\Omega V\) is clear from the structure of \(\PC(V)\). As above, we know that the composition factors of \(\Omega^2 V\) are \(\{ k^{4 + 2 - 2}, V^{2 + 2m -2m -1}, W^{2 + 2m + 2 - 2m}\} = \{k^4, V^1, W^4\}\). In particular, \(\Omega^2 V\) is not simple, and so \(V\) must have period 4. This tells us that \(\Omega^3 V \cong \Omega^{-1} V \cong \PC(V)/V\) with composition factors \(\{k^2, V^{2m+1}, W^{2m}\}\). But proceeding as in \cref{SL23mod4Cohomology} we see that the composition factors of \(\PC(\Omega^2 V)\) are \(\{k^6, V^{2m+2}, W^{2m+4}\}\). If we suppose that \(\PC(\Omega^2 V) \cong \PC(k)^{\oplus a} \oplus \PC(V)^{\oplus b} \oplus \PC(W)^{\oplus c}\), then we get the following set of simultaneous equations, which are easily solved to find that \(a = c = 1\) and \(b = 0\).
	\begin{align*}
		4a + 2b + 2c &= 6,\\
		2a + (2m+2)b + 2mc &= 2m+2,\\
		2a + 2mb + (2m+2)c &= 2m+4.
	\end{align*}
	This tells us that \(\head \Omega^2 V \cong k \oplus W\). The result then follows from the fact that \(\head \Omega^0 V \cong V\), \(\head \Omega V \cong k \oplus W\), \(\head \Omega^2 V \cong k \oplus W\) and \(\head \Omega^3 V \cong V\).
\end{proof}

\newpage

\printbibliography

\end{document}